\newcounter{alphabet}
\newcounter{tmp}
\newenvironment{Thm}[1][]{\refstepcounter{alphabet}%
\bigskip%
\noindent%
{\bf Theorem \Alph{alphabet}}%
\ifthenelse{\equal{#1}{}}{}{ (#1)}%
{\bf .} \itshape}{\vskip 8pt}
\newcommand{\Ref}[1]{\@ifundefined{r@#1}{}{\setcounter{tmp}{\ref{#1}}\Alph{tmp}}}
\def\be{\begin{equation}}
\def\ee{\end{equation}}
\newcommand{\IC}{{\mathbb C}}
\newcommand{\ID}{{\mathbb D}}
\newcommand{\dist}{{\operatorname{dist}}}
\newcommand{\ba}{\begin{array}}
\newcommand{\ea}{\end{array}}
\newcommand{\beq}{\begin{eqnarray}}
\newcommand{\eeq}{\end{eqnarray}}
\newcommand{\beqq}{\begin{eqnarray*}}
\newcommand{\eeqq}{\end{eqnarray*}}
\newcommand{\br}{\begin{remark}}
\newcommand{\er}{\end{remark}}
\newcommand{\brs}{\begin{remarks}}
\newcommand{\ers}{\end{remarks}}
\newcommand{\bprob}{\begin{problem}}
\newcommand{\eprob}{\end{problem}}
\newcommand{\ds}{\displaystyle}
\begin{document}

\title*{On the Bohr inequality}
\author{Y. Abu Muhanna,  R. M. Ali, and S. Ponnusamy
}
\institute{Yusuf  Abu Muhanna \at  Department of Mathematics,
American University of Sharjah, UAE-26666.
\email{ymuhanna@aus.edu}
\and  Rosihan M. Ali \at School of Mathematical Sciences, Universiti Sains Malaysia, 11800
USM Penang, Malaysia. \email{rosihan@usm.my}
\and Saminathan Ponnusamy \at
Indian Statistical Institute (ISI), Chennai Centre, SETS (Society
for Electronic Transactions and Security), MGR Knowledge City, CIT
Campus, Taramani, Chennai 600 113, India.
\email{samy@isichennai.res.in}}

%
%
\maketitle

\abstract{The Bohr inequality, first introduced by Harald Bohr in 1914, deals with finding the largest radius $r$, $0<r<1$, such that $\sum_{n=0}^\infty |a_n|r^n \leq 1$ holds whenever $|\sum_{n=0}^\infty a_nz^n|\leq 1$ in the unit disk $\mathbb{D}$ of the complex plane. The exact value of this largest radius, known as the \emph{Bohr radius}, has been established to be $1/3.$ This paper surveys recent advances and generalizations on the Bohr inequality. It discusses the Bohr radius for certain power series in $\mathbb{D},$ as well as for analytic functions from $\mathbb{D}$ into particular domains. These domains include the punctured unit disk, the exterior of the closed unit disk, and concave wedge-domains. The analogous Bohr radius is also studied for harmonic and starlike logharmonic mappings in $\mathbb{D}.$ The Bohr phenomenon which is described in terms of the Euclidean distance is further investigated using the spherical chordal metric and the hyperbolic metric. The exposition concludes with a discussion on the $n$-dimensional Bohr radius.\\[10pt]
Keywords: Power series; polynomials; Bohr inequality; Bohr radius; Bohr phenomenon; analytic functions;  Banach spaces;   operator inequality;
concave-wede domain; subordination; majorant series; alternating series; Dirichlet series;  harmonic mappings; holomorphic functions; Reinhardt domains; multidimensional Bohr radius.\\[10pt]
2010 Mathematics Subject Classification: 30A10; 30B10; 30B50;  30C35; 30C45; 30C80; 30H05;  32A05;  32A07; 32A10;  46L06;  47A56;  47A13.}

\section{Harald August Bohr (1887--1951)}\label{sec1}

Harald August Bohr was born on the twenty-second day
of April, 1887 in Copenhagen, Denmark, to Christian and Ellen Adler Bohr. His father was a distinguished professor of physiology
at the University of Copenhagen and his elder brother Niels was to become a famous theoretical physicist.

Harald and Niels were prolific football (soccer) players. Harald made his playing debut as a 16-year-old in 1903 with \emph{Akademisk Boldklub}. He represented the Danish national football team in the 1908 Summer Olympics, where football was first introduced as an official event.
Denmark faced hosts Great Britain in the final, but eventually lost 2-0, and Bohr and the Danish team came home as silver medalists.

Bohr enrolled at the University of Copenhagen in 1904 to study mathematics. It was reported that during his doctoral dissertation examination, there were more football fans in attendance than there were mathematicians!

Bohr became a professor in mathematics at the Copenhagen's Polytechnic Institute in 1915. He was later appointed as
professor at the University of Copenhagen in 1930, where he remained in that position until his demise on  January 22, 1951.

Bohr was an extremely capable teacher. Indeed to his honour, the annual award for outstanding teacher at the
University of Copenhagen is called \emph{The Harald}. With Johannes Mollerup, Bohr wrote
an influential four-volume textbook entitled \textit{Lærebog i Matematisk Analyse} (Textbook in mathematical analysis).


Bohr worked on Dirichlet series, and applied analysis to the theory of numbers. During this period, Edmund Landau was at G\"{o}ttingen, studying the Riemann zeta function $\zeta (s)$, and whom was also renowned for his unsolved problem on Landau's constant (see for example, \cite{samy-book3}).  Bohr  collaborated with Landau, and in 1914, they proved the Bohr-Landau theorem on the distribution of zeros for the zeta function. All but an infinitesimal proportion of these zeros lie in a small neighbourhood of the line $s = 1/2$. Although Niels Bohr was an accomplished physicist and Nobel Laureate, Harald and Niels only had one joint publication.

Bohr's interest in functions which could be represented by a Dirichlet series led to the development of
almost periodic functions. These are functions which, after a period,
take values within $e$ of the values in the previous period.
Bohr pioneered this theory and presented it in three major works during
the years 1923 and 1926 in \emph{Acta Mathematica}. It is with these works that
his name is now most closely associated.


Titchmarsh \cite{Titch} made the following citation on Bohr's work on almost periodic functions: ``\textit{The general theory was developed for the
case of a real variable, and then, in the light of it, was developed the most beautiful theory of almost
periodic functions of a complex variable.
The creation of the theory of almost periodic functions of a real
variable was a performance of extraordinary power, but was not based on the most up-to-date methods,
and the main results were soon simplified and improved. However, the theory of almost periodic functions
of a complex variable remains up to now in the same perfect form in which it was given by Bohr}''.

Bohr devoted his life to mathematics and to the theory of almost periodic functions. Four months before his death, Bohr was still actively engaged with the mathematical community at the International Congress of Mathematicians in Cambridge, Massachusetts,
in September, 1950; he died soon after the New Year.
Besicovitch wrote: ``\textit{For most of his life Bohr was a sick man. He used to suffer from bad headaches and had to avoid all mental effort.
Bohr the man was not less remarkable than Bohr the mathematician. He was a man of refined intellect, harmoniously
developed in many directions. He was also a most humane person. His help to his pupils, to his colleagues and friends,
and to refugees belonging to the academic world was generous indeed. Once he had decided to help he stopped at
nothing and he seldom failed. He was very sensitive to literature. His favourite author was Dickens; he had a
deep admiration of Dickens' love of the human being and deep appreciation of his humour}''.

Harald Bohr was elected an Honorary Member of the London Mathematical Society in 1939. Additional biographical account on Bohr may be obtained from http://en.wikipedia.org/wiki/Harald${}_{-}$Bohr

\section{The classical Bohr inequality}\label{sec2}
Let $\mathbb{D}$ denote the unit disk $\{z\in \IC:\, |z| < 1 \}$  and let $\mathcal{A} $ denote the space of functions analytic in
the unit disk $\mathbb{D}$. Then  $\mathcal{A} $ is a locally convex linear topological vector space
endowed with the topology of uniform convergence over compact subsets of $\mathbb{D}$. We may assume that $f\in {\mathcal A}$ has continuous boundary values
and $\|f\|_\infty =\sup_{z\in\ID}|f(z)|$. It is evident that each $f\in {\mathcal A}$  has a power series expansion about the origin. What can be deduced from the sum of the moduli of the terms in the series?

In 1914,  Harald Bohr \cite{Bohr-14} studied this property and made the observation: ``In particular, the solution of what is called the
``absolute convergence problem'' for Dirichlet series of the type
$\sum a_n n^{-s}$ must be based upon a study of the relations between
the absolute value of a power-series in an infinite number of
variables on the one hand, and the sum of the absolute values
of the individual terms on the other. It was in the course of this
investigation that I was led to consider a problem concerning
power-series in one variable only, which we discussed last year,
and which seems to be of some interest in itself."
More precisely, Bohr obtained the following remarkable result.

\begin{Thm}\label{Bohr-TheoA}
{\rm ([Bohr inequality (1914)])}
Let $f(z)=\sum\limits_{n=0}^{\infty }a_{n}z^{n}$ be analytic in $\mathbb{D}$, and $\|f\|_\infty :=\sup_{z\in\ID}|f(z)|<\infty$
for all $z\in \mathbb{D}.$ Then
\begin{equation}\label{eq1.1}
M_f(r):=\sum\limits_{n=0}^{\infty }|a_{n}|r^{n} \leq \|f\|_\infty
\end{equation}
for $0\leq r\leq 1/3.$
\end{Thm}

Here $M_f$ is the associated majorant series for $f$. Bohr actually obtained the inequality \eqref{eq1.1}  only for $r\leq {1}/{6}.$ M.~Riesz, I.~Schur and N.~ Wiener independently proved
its validity for $r\leq {1}/{3}$ and showed that the bound ${1}/{3}$
was sharp. The best constant $r$ in \eqref{eq1.1}, which is $1/3,$ is called the Bohr radius
for the class of all analytic self-maps of the unit disk $\ID$. Other proofs can also be found, for example, by Sidon \cite{Sidon-27-15},  Tomic \cite{Tomic-62-16}
and Paulsen {\it et al.} in  \cite{PaulPopeSingh-02-10} and \cite{PaulSingh-04-11, PaulSingh-06-12}. Similar problems were considered for Hardy spaces or for more abstract spaces, for instance, by Boas and Khavinson in \cite{BoasKhavin-97-4}. More recently, Aizenberg \cite{Aizen-00-1,Aizen-05-3} extended the inequality in the context of several complex variables which we shall discuss with some details in Section \ref{sec1-ndim}.

\subsection{Bohr phenomenon for the space of subordinate mappings}\label{sec2.1}
In recent years, two types of spaces are widely considered in the study of Bohr inequality. They are the space of subordinations and
the space of complex-valued bounded harmonic mappings. One way of generalizing the notion of the Bohr phenomenon, initially defined for
mappings from $\mathbb{D}$  to itself, is to rewrite Bohr inequality in the equivalent form
$$\sum_{k=1}^\infty |a_k|r^k\le 1-|a_0|=1-|f(0)|.
$$
The distance to the boundary is an important geometric quantity. Observe that the number $1-|f(0)|$ is the distance from the point
$f(0)$ to the boundary $\partial \mathbb{D}$ of the unit disk $\mathbb{D}.$ Using this ``distance form" formulation of Bohr inequality, the notion of  Bohr radius can be generalized to the  class of functions
$f$ analytic in $\mathbb{D}$ which take values in a given domain $\Omega$. For our formulation, we first introduce the
notion of subordination.

If $f$ and $g$ are analytic in $\ID$, then $g$ is {\it
subordinate} to $f$, written $g\prec f$ or $g(z)\prec f(z)$, if there exists a
function $w$ analytic in $\ID$ satisfying $w(0)=0$, $|w(z)|<1$ and
$g(z)=f(w(z))$ for $z\in \ID$.  If $f$ is univalent in $\mathbb{D}$, then $g\prec f$ if and only
if $g(0)=f(0)$ and $g(\ID )\subset f(\ID )$ (see \cite[ p.~190 and p.~253]{DurenUniv-83-8}). By the Schwarz lemma, it follows that
$$|g'(0)|= |f'(w(0))w'(0)| \leq |f'(0)|.
$$

Now for a given $f,$ let $S(f)=\{g:\, g\prec f\}$ and $\Omega =f(\ID)$. The family $S(f)$ is said to satisfy a Bohr phenomenon if there exists an $r_{f}$, $0<r_{f}\leq 1$ such that whenever $g(z)=\sum_{n=0}^{\infty} b_nz^n\in S(f)$, then
\begin{equation}\label{sub}
\sum_{n=1}^{\infty} |b_n|r^n =M_g(r)-|b_0|\leq \dist (f(0),\partial f(\ID))
\end{equation}
for $|z|=r<r_f.$
We observe that if  $f(z)=(a_0-z)/(1-\overline{a_0}z)$ with $|a_0|<1$, and $\Omega =\ID $, then $\dist (f(0),\partial \Omega)=1-|a_0|=1-|b_0|$
so that \eqref{sub} holds with $r_f=1/3$, according to Theorem \Ref{Bohr-TheoA}. We say that the family $S(f)$ satisfies the \textit{classical}
Bohr phenomenon if \eqref{sub} holds for $|z|=r<r_0$ with $1-|f(0)|$ in place of $\dist (f(0),\partial f(\ID))$.
Hence the distance form allows us to extend Bohr's theorem to a variety of distances provided the Bohr phenomenon exists. 
The following theorem was obtained in \cite[Theorem 1]{Abu}:

\begin{theorem}\label{subtheo}
If $f,g$ are analytic in $\ID$ such that $f$ is univalent in $\ID$ and $g\in S(f)$, then inequality \eqref{sub}
 holds with $r_f=3-2\sqrt{2}\approx 0.17157$. The sharpness of $r_f$ is shown by the Koebe function $f(z)=z/(1-z)^2.$
\end{theorem}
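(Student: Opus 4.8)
The plan is to reduce \eqref{sub} to two classical facts about univalent functions and then optimize an elementary inequality in $r$. Write $f(z)=a_0+a_1z+a_2z^2+\cdots$ and $g(z)=\sum_{n=0}^{\infty}b_nz^n$, and set $\Omega=f(\ID)$, $d=\dist(f(0),\partial\Omega)$. Because $g\prec f$ with $f$ univalent, subordination forces $b_0=g(0)=f(0)=a_0$, so the left-hand side of \eqref{sub} is exactly $\sum_{n=1}^{\infty}|b_n|r^n$ and the target bound is $d$. The task is therefore to bound $\sum_{n\ge1}|b_n|r^n$ by $d$, uniformly over all $g\prec f$.

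First I would estimate $d$ from below in terms of $a_1=f'(0)\neq 0$. Applying the Koebe one-quarter theorem to the normalized univalent map $F(z)=(f(z)-a_0)/a_1$ gives $F(\ID)\supset\{w:|w|<1/4\}$, so $\Omega$ contains the disk of radius $|a_1|/4$ centred at $f(0)$, whence
\[
d=\dist(f(0),\partial\Omega)\ \ge\ \frac{|a_1|}{4},\qquad\text{i.e.}\qquad |a_1|\le 4d .
\]
Second, I would bound the subordinate coefficients by $|b_n|\le n\,|a_1|$ for every $n\ge1$. This is the technical heart of the proof. Passing to $G(z)=(g(z)-a_0)/a_1=\sum_{n\ge1}(b_n/a_1)z^n$, one has $G\prec F$ with $F$ normalized univalent, and the claim becomes $|b_n/a_1|\le n$, which is precisely Rogosinski's coefficient inequality for a function subordinate to a univalent map. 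I expect this to be the main obstacle: in full generality it rests on the Bieberbach--de Branges bounds $|A_n|\le n$ for the coefficients $A_n$ of $F$, and I would invoke it as a known theorem rather than reprove it.

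With these two ingredients the conclusion follows quickly. Summing,
\[
\sum_{n=1}^{\infty}|b_n|r^n\ \le\ |a_1|\sum_{n=1}^{\infty}n\,r^n\ =\ \frac{|a_1|\,r}{(1-r)^2}\ \le\ \frac{4d\,r}{(1-r)^2},
\]
so \eqref{sub} holds as soon as $4r/(1-r)^2\le1$, that is $r^2-6r+1\ge0$. The roots being $3\pm2\sqrt2$, this holds exactly for $0\le r\le 3-2\sqrt2$, giving $r_f=3-2\sqrt2$. For sharpness I would take the Koebe function $f(z)=z/(1-z)^2=\sum_{n\ge1}nz^n$ with $g=f$, so $a_1=1$, $b_n=n$, and $\Omega=\IC\setminus(-\infty,-1/4]$ yields $d=1/4$; then every inequality above is an equality, and $\sum_{n\ge1}nr^n=r/(1-r)^2$ equals $d=1/4$ exactly at $r=3-2\sqrt2$ while exceeding it for larger $r$. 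Hence $r_f=3-2\sqrt2$ is best possible.
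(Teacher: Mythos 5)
Your proof is correct and takes essentially the same route as the paper: both rest on the Koebe one-quarter bound $\dist(f(0),\partial\Omega)\ge |f'(0)|/4$ together with Rogosinski's inequality $|b_n|\le n|f'(0)|$ for subordinate coefficients, then sum $\sum_{n\ge1} n r^n = r/(1-r)^2$ and solve $4r\le(1-r)^2$ to get $r\le 3-2\sqrt{2}$, with sharpness from the Koebe function. The paper merely cites these two classical facts (Duren; de Branges) and performs the identical computation, so there is no substantive difference.
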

\begin{proof}
Let $g(z)=\sum_{n=0}^{\infty} b_nz^n\prec f(z)$, where  $f$ is a univalent mapping of $\ID$ onto a simply connected domain $\Omega =f(\ID)$.
Then it is well known that (see, for instance, \cite[p.~196]{DurenUniv-83-8} and \cite{DeB1})
\be\label{eq1-subtheo}
\frac{1}{4}|f'(0)|\leq \dist (f(0),\partial \Omega )\leq |f'(0)| ~\mbox{ and }~ |b_n| \leq n |f'(0)|.
\ee
It follows that $|b_n| \leq 4n \dist (f(0),\partial \Omega ),$ and thus
\begin{eqnarray*}
\sum_{n=1}^{\infty} |b_n|r^n \leq 4\dist (f(0),\partial \Omega )  \sum_{n=1}^{\infty} nr^n =4 \dist (f(0),\partial \Omega )  \frac{r}{(1-r)^2}
\leq \dist (f(0),\partial \Omega )
\end{eqnarray*}
provided $4 r\leq (1-r)^2 $, that is, for $r\leq 3-2\sqrt{2}.$ When $f(z)=z/(1-z)^{2}$, we obtain
$\dist (f(0),\partial \Omega )=1/4$ and a simple calculation gives sharpness.
\qed \end{proof}

In \cite{Abu}, it was also pointed out that for $f(z)=z/(1-z)^2$, $S(f)$ does not have the classical Bohr phenomenon. Moreover, from the proof of Theorem \ref{subtheo}, it is easy to see that $r_f=3-2\sqrt{2}$ could be replaced by $1/3$ if $f$ is univalent in $\ID$ with convex image.
In this case, instead of \eqref{eq1-subtheo}, one uses the following (see  \cite[p.~195, Theorem 6.4]{DurenUniv-83-8}):
$$\frac{1}{2}|f'(0)|\leq \dist (f(0),\partial \Omega )\leq |f'(0)| ~\mbox{ and }~ |b_n| \leq  |f'(0)|.
$$
Hence the deduced result clearly contains the classical Bohr inequality \eqref{eq1.1}. In the next section, we shall present various other generalizations and improved results.

\subsection{Bohr radius for alternating series and symmetric mappings}
The majorant series of $f$ defined by \eqref{eq1.1} belongs to a very important class
of series, namely,  series with non-negative terms. As pointed out by the authors in  \cite{AliBarSoly}, there is yet another class of interesting series--the class of
alternating series. Thus, for $f(z)=\sum\limits_{n=0}^{\infty }a_{n}z^{n},$ its associated
\emph{alternating series} is given by
\begin{equation} \label{0.6}
A_f(r)=\sum_{k=0}^\infty (-1)^k|a_k|\,r^k.
\end{equation}
In \cite{AliBarSoly}, the authors obtained several results on Bohr radius, which include the following counterpart of Theorem \Ref{Bohr-TheoA}.

\begin{theorem} \label{Bohr-Theorem-2}
If $f(z)=\sum\limits_{n=0}^{\infty }a_{n}z^{n}$ is analytic and bounded in $\mathbb{D},$ then
$\left|A_f(r)\right| \leq \|f\|_\infty$
for $0\leq r\leq 1/\sqrt{3}$. The radius $r=1/\sqrt{3}$ is best possible.
\end{theorem}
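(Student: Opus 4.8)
The plan is to reduce to a self-map of the disk and then read off the binding constraint from the Schur--Wiener coefficient estimates that already power Theorem~\Ref{Bohr-TheoA}. Normalising so that $\|f\|_\infty=1$, the function $f$ maps $\ID$ into $\overline{\ID}$, and the classical coefficient bounds for bounded analytic functions give $|a_0|\le 1$ together with $|a_n|\le 1-|a_0|^2$ for every $n\ge 1$. Writing $a:=|a_0|\in[0,1]$ and separating the even- and odd-indexed terms of
\[
A_f(r)=a+\sum_{n=1}^{\infty}(-1)^n|a_n|\,r^n,
\]
I would sandwich $A_f(r)$ between two explicit extremes. For the upper estimate, keep the (positive) even terms as large as possible and drop the odd ones:
\[
A_f(r)\le a+(1-a^2)\sum_{k=1}^{\infty}r^{2k}=a+(1-a^2)\frac{r^2}{1-r^2};
\]
for the lower estimate, do the opposite,
\[
A_f(r)\ge a-(1-a^2)\sum_{k=0}^{\infty}r^{2k+1}=a-(1-a^2)\frac{r}{1-r^2}.
\]

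Since $A_f(r)$ lies between these two quantities, it suffices to test each against $\pm1$. For $a<1$ the inequality $a+(1-a^2)\frac{r^2}{1-r^2}\le 1$ is, after dividing by $1-a$, equivalent to $(1+a)\frac{r^2}{1-r^2}\le 1$; because $1+a\le 2$, this holds as soon as $2r^2\le 1-r^2$, that is $r\le 1/\sqrt3$, and the case $a=1$ is immediate. The lower estimate $a-(1-a^2)\frac{r}{1-r^2}\ge -1$ reduces similarly to $(1-a)\frac{r}{1-r^2}\le 1$, which is valid on the strictly larger interval $r\le(\sqrt5-1)/2$ and therefore imposes no additional restriction. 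Combining the two yields $-1\le A_f(r)\le 1$, i.e.\ $|A_f(r)|\le\|f\|_\infty$, for $0\le r\le 1/\sqrt3$.

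For sharpness I would use the even-order analogue of the classical Bohr extremal, namely $f(z)=(a-z^2)/(1-az^2)$ with $a\in(0,1)$, which is a self-map of $\ID$ with $\|f\|_\infty=1$, vanishing odd coefficients, and $|a_{2k}|=(1-a^2)a^{k-1}$. Summing the resulting geometric series gives $A_f(r)=a+(1-a^2)\frac{r^2}{1-ar^2}$, so that
\[
A_f(r)-1=(1-a)\left[(1+a)\frac{r^2}{1-ar^2}-1\right].
\]
For any fixed $r>1/\sqrt3$ the bracket tends to $(3r^2-1)/(1-r^2)>0$ as $a\to 1^-$, hence it is positive for $a$ close enough to $1$; for such $a$ the product is strictly positive and $A_f(r)>1=\|f\|_\infty$, showing that no radius larger than $1/\sqrt3$ can work. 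The main point to get right is the choice of extremal: the ordinary Bohr extremal $\phi_a(z)=(a-z)/(1-az)$ will not suffice, since one checks that its alternating series stays inside $[-1,1]$ for all $r<1$; the binding configuration instead concentrates the mass on the even coefficients, which is precisely what $\phi_a(z^2)$ realises, and this is also why the even (upper) estimate, rather than the odd (lower) one, determines the Bohr radius here.
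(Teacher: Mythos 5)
Your proof is correct, so the main thing to report is how it sits relative to the paper: the paper is a survey and states this theorem \emph{without proof}, simply citing \cite{AliBarSoly}, so there is no in-text argument to compare against line by line. The fair comparison is with the machinery the paper does display, namely its proof of Theorem \ref{Lemma-1}, and your argument is exactly the natural adaptation of that proof to alternating signs: the same normalization and Wiener/Schur coefficient bound $|a_n|\le 1-|a_0|^2$ for $n\ge 1$, the same geometric-series summation, and the same family of extremals $\varphi_\alpha(\zeta)$ with $\zeta=z^n$ --- your sharpness function $f(z)=(a-z^2)/(1-az^2)$ is precisely the $n=2$ member used in the paper's proof of Theorem \ref{Lemma-1}. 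The genuinely new ingredient that the alternating series requires, and which you supply correctly, is the two-sided even/odd sandwich: bounding $A_f(r)$ above by keeping only even-indexed terms gives the constraint $r\le 1/\sqrt{3}$, bounding it below by keeping only odd-indexed terms gives the weaker constraint $r\le(\sqrt{5}-1)/2$, and you correctly identify that the even side is the binding one. Your computations check out: the upper test reduces to $(1+a)r^2/(1-r^2)\le 1$, which $1+a\le 2$ turns into $3r^2\le 1$; and in the sharpness analysis $A_f(r)-1=(1-a)\bigl[(1+a)r^2/(1-ar^2)-1\bigr]$ with the bracket tending to $(3r^2-1)/(1-r^2)>0$ as $a\to 1^-$ whenever $1/\sqrt{3}<r<1$. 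Your closing remark is also a worthwhile observation that the paper never makes explicit: the classical extremal $\phi_a(z)=(a-z)/(1-az)$ of Theorem \Ref{Bohr-TheoA} cannot witness sharpness here, since $A_{\phi_a}(r)=a-(1-a^2)r/(1+ar)$ stays in $[-1,1]$ for all $r<1$; the extremal configuration must concentrate on even coefficients, which is exactly what $\phi_a(z^2)$ does. In short, your proposal gives a self-contained elementary proof of a statement the survey only quotes, using the survey's own toolkit.
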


In \cite{AliBarSoly}, the authors proved the following.
\begin{theorem} \label{Lemma-2}
Let  $f(z)=\sum_{k=1}^\infty a_{2k-1} z^{2k-1}$ be an odd analytic
function in $\mathbb{D}$ such that $|f(z)|\le 1$ in $\mathbb{D}$.
Then $ M_f(r) \le 1$ for $0\leq r\leq r_*$, where $r_*$ is a solution of the equation %
$$ 5r^4+4r^3-2r^2-4r+1=0,
$$
which is unique in the interval $1/\sqrt{3}<r<1$. The value of $r_*$ can be calculated in terms of radicals as
\begin{equation*} 
r_*=-\frac{1}{5}+\frac{1}{10}\sqrt{\frac{A+32}{3}}+
\frac{1}{10}\sqrt{\frac{64}{3}-\frac{A}{3}+144\sqrt{\frac{3}{A+32}}}=
0.7313\ldots,
\end{equation*}
where
$$ A=10\cdot 2^{\frac{2}{3}}\left((47-3\sqrt{93})^{\frac{1}{3}}+(47+3\sqrt{93})^{\frac{1}{3}}\right).
$$
\end{theorem}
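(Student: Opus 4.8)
The plan is to use the odd symmetry of $f$ to replace the coefficient problem by a genuine Schwarz-lemma estimate, and then to reduce the whole inequality to the optimization of a single-variable majorant. First I would note that, because $f(z)=\sum_{k=1}^\infty a_{2k-1}z^{2k-1}$ is odd, the function $\psi(w):=f(\sqrt w)/\sqrt w=\sum_{k=1}^\infty a_{2k-1}w^{k-1}$ is single-valued and analytic in $\ID$, with $\psi(0)=a_1$. The crucial preliminary claim is that $\|\psi\|_\infty\le 1$. To obtain it I would write $f(z)=z\,\psi(z^2)$ and feed in $|f(z)|\le 1$: on the circle $|z|=\sqrt s$ this gives $|\psi(w)|\le 1/\sqrt s$ for $|w|=s<1$, and letting $s\to 1^-$ in the maximum-modulus principle forces $\|\psi\|_\infty\le 1$.

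With $\psi$ now a self-map of $\ID$, I would invoke the classical coefficient bound for bounded functions: if $\psi(w)=\sum_{j\ge 0}b_jw^j$ with $|\psi|\le 1$, then $|b_0|\le 1$ and $|b_j|\le 1-|b_0|^2$ for every $j\ge 1$ (one proves this by passing to $(\psi-b_0)/(1-\overline{b_0}\psi)$, which fixes the origin, and applying the Schwarz lemma). Since $b_0=a_1$ and $b_{k-1}=a_{2k-1}$, this yields precisely $|a_1|\le 1$ and $|a_{2k-1}|\le 1-|a_1|^2$ for all $k\ge 2$. Setting $a=|a_1|$ and summing the geometric tail $\sum_{k\ge 2}r^{2k-1}=r^3/(1-r^2)$ then produces the master estimate
\[
M_f(r)\le ar+(1-a^2)\frac{r^3}{1-r^2}=:\Phi(a),\qquad a\in[0,1].
\]

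The last step is to maximize $\Phi$ over $a$ with $r$ fixed. As $\Phi$ is concave, its unconstrained maximizer is $a^*=(1-r^2)/(2r^2)$, which lies in $[0,1]$ exactly when $r\ge 1/\sqrt3$. For $r\le 1/\sqrt3$ the maximum sits at the endpoint $a=1$, where $\Phi(1)=r<1$, so the inequality is immediate; this is why the interesting range begins at $1/\sqrt3$. For $1/\sqrt3\le r<1$ I would substitute $a^*$ and simplify to
\[
\Phi(a^*)=\frac{5r^4-2r^2+1}{4r(1-r^2)},
\]
so that $M_f(r)\le 1$ becomes equivalent to $5r^4+4r^3-2r^2-4r+1\le 0$. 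Writing $P(r)$ for this quartic, one checks $P(1/\sqrt3)<0$ and $P(1)=4>0$; hence $P$ vanishes at some $r_*\in(1/\sqrt3,1)$, and showing this root is unique there (by monotonicity of $P$ on the interval) gives $P\le 0$ exactly on $[1/\sqrt3,r_*]$, the asserted range.

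I expect two places to carry the real weight. The first is the justification of $\|\psi\|_\infty\le 1$: the pointwise bound $|\psi(w)|\le 1/\sqrt{|w|}$ degenerates at the origin, so the conclusion has to be read off from the boundary via maximum modulus rather than drawn directly. The second, and more laborious, obstacle is extracting $r_*$ in closed radical form, since $P$ is a true quartic; here I would run Ferrari's method through the resolvent cubic, whose solution supplies the auxiliary quantity $A=10\cdot 2^{2/3}\big((47-3\sqrt{93})^{1/3}+(47+3\sqrt{93})^{1/3}\big)$ and finally the stated value $r_*=0.7313\ldots$. By contrast, the geometric summation and the concave optimization are routine once the coefficient inequality $|a_{2k-1}|\le 1-|a_1|^2$ is secured.
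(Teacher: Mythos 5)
Your proof is correct. The survey states this theorem without reproducing a proof (it is quoted from \cite{AliBarSoly}), but your argument---writing $f(z)=z\psi(z^2)$, getting $\|\psi\|_\infty\le 1$ via the maximum principle, applying the coefficient bound $|b_j|\le 1-|b_0|^2$, and maximizing the concave majorant $ar+(1-a^2)r^3/(1-r^2)$ over $a=|a_1|\in[0,1]$---is exactly the technique the paper itself uses for the $n$-symmetric case (Theorem \ref{Lemma-1}), and it reproduces precisely the stated quartic $5r^4+4r^3-2r^2-4r+1\le 0$ and $r_*=0.7313\ldots$, the only parts left as sketches being routine algebra (uniqueness of the root, which follows since $P'(1/\sqrt{3})=8/(3\sqrt{3})>0$ and $P''>0$ on $[1/\sqrt{3},1]$, and the Ferrari computation of $r_*$ in radicals).
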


In \cite{AliBarSoly}, an example was given  to conclude that the
Bohr radius for the class of odd functions satisfies the
inequalities $r_*\le r\le r^*\approx 0.7899 $, where
$$ r^*= \frac{1}{4}\sqrt{\frac{B-2}{6}}+\frac{1}{2}
\sqrt{3\sqrt{\frac{6}{B-2}}-\frac{B}{24}-\frac{1}{6}},
$$
with
$$B=(3601-192\sqrt{327})^{\frac{1}{3}}+(3601+192\sqrt{327})^{\frac{1}{3}}.
$$
This raises the following open problem.

\bprob (\cite{AliBarSoly})
{\it Find the Bohr radius for the class of odd functions $f$ satisfying $|f(z)|\le 1$ for all $z\in
\mathbb{D}$.}
\eprob

Apart from the majorant and alternating series defined by \eqref{eq1.1} and \eqref{0.6}, one can
consider a more general type of series associated with $f$ given by
\begin{equation} \label{2.203}
S_f^n(r)=\sum_{k=0}^\infty e^{\frac{2\pi i k}{n}}|a_k|r^k,
\end{equation}
where $n$ is a positive integer. Note that
$$M_f(r)=S_f^1(r) ~\mbox{ and }~  A_f(r)=S_f^2(r).
$$
The arguments of coefficients of series \eqref{2.203} are equally
spaced over the interval $[0,2\pi),$ and thus $S_f^n(r)$ can be
thought of  as an \emph{argument symmetric series}
associated with $f$.  This raised the next problem.

\bprob (\cite{AliBarSoly})
Given a positive integer $n\ge 2,$ and $|f(z)|\le 1$ in $\mathbb{D},$ find the largest radius $r_n$ such that $|S_f^n(r)|\le 1$ for all $r\le
r_n.$
\eprob

We recall that an analytic function in $\mathbb{D}$ is called $n$-symmetric,
where $n\ge 1$ is an integer, if $f(e^{2\pi i/n}z)=f(z)$ for all
$z\in \mathbb{D}$. It is a simple exercise to see that $f(z)=\sum\limits_{n=0}^{\infty }a_{n}z^{n}$ is $n$-symmetric if
and only if its Taylor expansion has the $n$-symmetric form
$$ 
f(z)=a_0+\sum_{k=1}^\infty a_{nk}z^{nk}.
$$ 
In \cite{AliBarSoly}, the authors generalized Theorem \Ref{Bohr-TheoA} as follows.

\begin{theorem} \label{Lemma-1} %
If $f(z)=\sum_{k=0}^\infty a_{nk} z^{nk}$ is analytic in $\mathbb{D}$,
and  $\left|f(z)\right|\le 1$ in $\mathbb{D}$, then $M_f(r)\le 1$
for $0\leq r\leq 1/\sqrt[n]{3}$.  The radius $r=1/\sqrt[n]{3}$ is best possible.
\end{theorem}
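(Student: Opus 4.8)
The plan is to reduce the statement to the classical Bohr inequality (Theorem~\Ref{Bohr-TheoA}) by the substitution $w=z^n$. Since $f$ has the $n$-symmetric expansion $f(z)=\sum_{k=0}^\infty a_{nk}z^{nk}$, it depends on $z$ only through $z^n$; I would therefore introduce
$$g(w)=\sum_{k=0}^\infty a_{nk}\,w^k,$$
so that $f(z)=g(z^n)$ for all $z\in\ID$, and then argue that $g$ inherits the hypotheses of the classical theorem. The series defining $g$ converges on $\ID$ because that of $f$ does, so $g\in\mathcal{A}$. For the bound, the crucial observation is that $z\mapsto z^n$ carries $\ID$ \emph{onto} $\ID$: every $w\in\ID$ equals $z^n$ for some $z\in\ID$ (take any $n$-th root, noting $|z|=|w|^{1/n}<1$). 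Hence $|g(w)|=|f(z)|\le\|f\|_\infty\le 1$ for every $w\in\ID$, so $g$ is a bounded analytic self-map of the disk.

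Next I would apply Theorem~\Ref{Bohr-TheoA} directly to $g$, obtaining
$$M_g(\rho)=\sum_{k=0}^\infty |a_{nk}|\,\rho^k\le 1 \qquad\text{for } 0\le \rho\le\tfrac13.$$
The whole argument then closes on the bookkeeping identity
$$M_f(r)=\sum_{k=0}^\infty |a_{nk}|\,r^{nk}=\sum_{k=0}^\infty |a_{nk}|\,(r^n)^k=M_g(r^n),$$
so that $M_f(r)\le 1$ holds precisely when $r^n\le 1/3$, that is, for $0\le r\le 1/\sqrt[n]{3}$, which is the asserted range.

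For sharpness I would push the extremal function of the classical theorem through the same substitution. Taking $\varphi_a(w)=(a-w)/(1-aw)$ with $a\in(0,1)$ and setting $f(z)=\varphi_a(z^n)$ produces an $n$-symmetric self-map of $\ID$ for which $M_f(r)=M_{\varphi_a}(r^n)$. A short computation gives $M_{\varphi_a}(\rho)=1$ at $\rho=1/(1+2a)$, and letting $a\to 1^-$ drives this critical value of $\rho$ down to $1/3$; in the $z$-variable this is exactly $r=1/\sqrt[n]{3}$, showing the constant cannot be enlarged.

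I expect no serious obstacle here: the only point that genuinely demands care is the surjectivity of $z\mapsto z^n$ on $\ID$, since that is what transfers the bound $|f|\le 1$ on $\ID$ into the bound $|g|\le 1$ on \emph{all} of $\ID$, rather than on the image of some restricted set. Once that is secured, the result is a one-line consequence of Theorem~\Ref{Bohr-TheoA} together with the identity $M_f(r)=M_g(r^n)$.
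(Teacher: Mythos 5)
Your proposal is correct and follows essentially the same route as the paper: both proofs substitute $\zeta=z^n$ to pass to the auxiliary self-map $g(\zeta)=\sum_{k=0}^\infty a_{nk}\zeta^k$ of $\ID$ (the surjectivity of $z\mapsto z^n$ giving $|g|\le 1$), and both establish sharpness with $\varphi_\alpha(z^n)$, letting $\alpha\to 1^-$ so that the critical radius $1/(1+2\alpha)$ tends to $1/3$. The only difference is presentational: you invoke Theorem~\Ref{Bohr-TheoA} as a black box for $g$, whereas the paper re-derives that inequality inline from the coefficient bound $|a_{nk}|\le 1-|a_0|^2$ and the geometric series estimate, a choice it then reuses in the subsequent corollaries.
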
 %
\begin{proof} Put $\zeta =z^n$ and consider a function
$g(\zeta)=\sum_{k=0}^\infty a_{nk}\zeta^k$. Clearly, $g$ is analytic in $\ID$ and
$|g(\zeta)|=|f(z)|\le 1$ for all $|\zeta|<1$. Thus, $|a_{nk}|\leq 1-|a_0|^2$ for all $k\geq 1$
and this well known inequality is easily established (see \cite{Bohr-14} and \cite[Exercise 8, p.172]{Nehari}).
For $r^n\leq 1/3$,
\beqq
M_f(r)&\leq &  |a_0| +( 1-|a_0|^2)\sum_{k=1}^\infty r^{nk}=|a_0| +( 1-|a_0|^2)\frac{r^n}{1-r^n}\\
&\leq& |a_0| +( 1-|a_0|^2)\frac{1/3}{1-(1/3)} :=\frac{1}{2}h(|a_0|),
\eeqq
where $h(x)=1+2x-x^2$, $0\leq x<1$. Since $h(x)\leq h(1)$, it follows that
$$M_f(r)=\sum_{k=0}^\infty |a_{nk}| r^{nk} \leq 1 ~\mbox{ for $r\leq 1/\sqrt[n]{3}$.}
$$

To show that the radius $1/\sqrt[n]{3}$ is best possible, consider
$$\varphi_\alpha (\zeta)=\frac{\alpha-\zeta}{1-\overline{\alpha}\zeta }
$$
with $\zeta=z^n.$ Then, for each fixed $\alpha \in \ID$, $\varphi_\alpha $ is analytic in $\ID$, $\varphi_\alpha (\ID)=\ID$ and $\varphi_\alpha (\partial \ID)=\partial\ID.$ It suffices to restrict  $\alpha$ such that $0<\alpha<1$. Moreover, for $|\zeta|<1/\alpha $,
$$\varphi_\alpha (\zeta)=(\alpha-\zeta)\sum_{k=0}^\infty \alpha^{k} \zeta^{k}=\alpha - (1-\alpha^2)\sum_{k=1}^\infty \alpha^{k-1} \zeta^{k},
$$
and with $|\zeta|=\rho,$
$$M_{\varphi_\alpha} (\rho)=\alpha + (1-\alpha^2)\sum_{k=1}^\infty \alpha^{k-1} \rho ^{k}=2\alpha -\varphi_\alpha (\rho ).
$$
It follows that $M_{\varphi_\alpha} (\rho)>1$ if and only if $(1-\alpha)((1+2\alpha)\rho -1)>0$, which gives $\rho >1/(1+2\alpha)$. Since $\alpha$ can be chosen
arbitrarily close to $1,$ this means that the radius $r=1/\sqrt[n]{3}$ in Theorem \ref{Lemma-1}  is best possible.
\qed\end{proof}

%
In Theorem \ref{Lemma-1}, it would be interesting to find the smallest constant for Bohr inequality to hold when $r>1/\sqrt[n]{3}$.
From the proof, it is clear that
\beqq
M(r) &=&  \sup\left \{ t +( 1-t^2)\frac{r^n}{1-r^n}:\, 0\leq t=|a_0|\leq 1\right \}\\
&=&\left \{\begin{array}{cl}
1& \mbox{ for }~ 0\leq r\leq 1/\sqrt[n]{3} \\
\ds \frac{4r^{2n}+(1-r^n)^2}{4r^{n}(1-r^n)} & \mbox{ for }~ 1/\sqrt[n]{3}<r<1.
\end{array}\right .
\eeqq
On the other hand, it follows from the argument of Landau, which is an immediate consequence of the Cauchy-Bunyakovskii inequality, that
$$M_f(r)\leq \left (\sum_{k=0}^\infty |a_{nk}|^2\right )^{1/2} \left (\sum_{k=0}^\infty r^{2nk} \right )^{1/2}= \frac{\|f\|_2}{\sqrt{1-r^{2n}}} \leq \frac{\|f\|_\infty}{\sqrt{1-r^{2n}}},
$$
where $\|f\|_2$ stands for the norm of the Hardy space $H^2(\ID)$. Thus the following result is obtained.

\begin{corollary} \label{cor-1a} %
If $f(z)=\sum_{k=0}^\infty a_{nk} z^{nk}$ is bounded and analytic in $\mathbb{D}$, then
$$M_f(r)\le A_n(r)\|f\|_\infty,
$$
where $A_n(r)=\inf\{M(r), 1/\sqrt{1-r^{2n}}\}. $
\end{corollary}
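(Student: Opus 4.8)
The plan is simply to combine the two upper estimates for $M_f(r)$ that have already been derived in the paragraphs preceding the statement, since $A_n(r)$ is by definition the smaller of the two quantities occurring in those estimates. Thus the corollary is essentially immediate, and the proof amounts to assembling ingredients that are in hand.

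First I would record the bound coming out of the proof of Theorem \ref{Lemma-1}. For $f\not\equiv 0$ I normalise by setting $F=f/\|f\|_\infty$, so that $\|F\|_\infty=1$ and $F(z)=\sum_{k=0}^\infty b_{nk}z^{nk}$ with $b_{nk}=a_{nk}/\|f\|_\infty$. The coefficient inequality $|b_{nk}|\le 1-|b_0|^2$ for $k\ge 1$ then gives
$$M_F(r)\le |b_0|+(1-|b_0|^2)\frac{r^n}{1-r^n}\le M(r),$$
where $M(r)$ is exactly the supremum over $t=|b_0|\in[0,1]$ computed just after Theorem \ref{Lemma-1}. Multiplying through by $\|f\|_\infty$ and using the homogeneity $M_f(r)=\|f\|_\infty\,M_F(r)$ yields the first bound $M_f(r)\le M(r)\,\|f\|_\infty$.

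Next I would invoke the Cauchy--Bunyakovskii (Landau) estimate already displayed above, namely
$$M_f(r)\le\Big(\sum_{k=0}^\infty|a_{nk}|^2\Big)^{1/2}\Big(\sum_{k=0}^\infty r^{2nk}\Big)^{1/2}=\frac{\|f\|_2}{\sqrt{1-r^{2n}}}\le\frac{\|f\|_\infty}{\sqrt{1-r^{2n}}},$$
where $\|f\|_2\le\|f\|_\infty$ for $f$ in the Hardy space $H^2(\ID)$. Since $M_f(r)$ is dominated by each of the two numbers $M(r)\|f\|_\infty$ and $\|f\|_\infty/\sqrt{1-r^{2n}}$, it is dominated by their minimum, which is precisely $A_n(r)\|f\|_\infty$ with $A_n(r)=\inf\{M(r),1/\sqrt{1-r^{2n}}\}$.

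There is no genuine obstacle in this argument: both ingredients are established before the statement, so the work is only in putting them together. The one point deserving a word of care is the normalisation $F=f/\|f\|_\infty$ in the first step, which is needed because the closed-form expression for $M(r)$ was derived under the running hypothesis $|f|\le 1$; the homogeneity of the majorant series $M_f(r)$ in $f$ makes this passage to arbitrary bounded $f$ immediate, and the case $f\equiv 0$ is trivial.
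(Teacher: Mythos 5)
Your proposal is correct and is essentially the paper's own argument: the corollary is stated there as an immediate consequence of the two bounds derived just before it, namely $M_f(r)\le M(r)\|f\|_\infty$ from the coefficient inequality $|a_{nk}|\le 1-|a_0|^2$ and the Landau/Cauchy--Bunyakovskii estimate $M_f(r)\le \|f\|_\infty/\sqrt{1-r^{2n}}$. Your only addition is making explicit the normalisation $F=f/\|f\|_\infty$ (with the trivial case $f\equiv 0$ noted), which the paper leaves implicit; this is a harmless and correct clarification, not a different route.
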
 %

We remark that for $r$ close to $1$, $M(r)\sqrt{1-r^{2n}}>1$ which is reversed for $r$ close to $1/\sqrt[n]{3}$.
So a natural question is to look for the best such constant $A(r)$.  In  \cite{BombBour-196},  Bombieri determined
the exact value of this constant for the case $n=1$ and for $r$ in the range $1/3\leq  r\leq 1/\sqrt{2}.$ This constant is
$$A(r ) =\frac{3-\sqrt{8(1-r^2)}}{r}.
$$
Later  Bombieri and Bourgain in \cite{BombBour-2004} considered the function
$$m(r)=\sup \left \{\frac{M_f(r)}{\|f\|_\infty} \right \}
$$
for the case $n=1$, and studied the behaviour of $m(r)$  as $r\rightarrow 1$ (see also \cite{DjakovRama-2000}). More precisely,
the authors in \cite[Theorem 1]{BombBour-2004} proved the following
result which validated a question raised in \cite[Remark 1]{PaulPopeSingh-02-10} in the affirmative.

\begin{theorem} \label{theo-BB}
If $r>1/\sqrt{2}$, then $m(r)< 1/\sqrt{1-r^2}$. With $\alpha =1/\sqrt{2}$, the function $\varphi_\alpha (z)=(\alpha-z)/(1-\alpha z)$  is extremal giving
$m(1/\sqrt{2})=\sqrt{2}$.
\end{theorem}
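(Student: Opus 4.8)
The plan is to treat the two assertions separately: first verify that the M\"obius map $\varphi_\alpha$ attains the Cauchy--Bunyakovskii bound exactly at $r=1/\sqrt2$, and then show that this bound is never attained for $r>1/\sqrt2$, so the supremum defining $m(r)$ must stay strictly below $1/\sqrt{1-r^2}$. For the extremal value I would use the expansion already recorded above, $\varphi_\alpha(z)=\alpha-(1-\alpha^2)\sum_{k\ge1}\alpha^{k-1}z^k$, together with the computation $M_{\varphi_\alpha}(r)=\alpha+(1-\alpha^2)\sum_{k\ge1}\alpha^{k-1}r^k$ given there, so that
\[
\frac{M_{\varphi_\alpha}(r)}{\|\varphi_\alpha\|_\infty}=\alpha+(1-\alpha^2)\frac{r}{1-\alpha r}.
\]
Setting $\alpha=r=1/\sqrt2$ gives the value $\sqrt2=1/\sqrt{1-r^2}$; since the Cauchy--Bunyakovskii argument derived above already yields $m(r)\le 1/\sqrt{1-r^2}$, this forces $m(1/\sqrt2)=\sqrt2$ and exhibits $\varphi_{1/\sqrt2}$ as extremal.

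For the strict inequality I would first argue that the supremum is \emph{attained}. Because $|a_n|\le\|f\|_\infty$, the tail $\sum_{n>N}|a_n|r^n$ is uniformly small on the unit ball of $H^\infty(\ID)$, so $f\mapsto M_f(r)$ is a uniform limit of the weak-$*$ continuous functionals $f\mapsto\sum_{n\le N}|a_n|r^n$ and is therefore weak-$*$ continuous; as the unit ball is weak-$*$ compact, an extremal $f^*$ exists. Strict inequality is then equivalent to the nonexistence of $f^*$ with $\|f^*\|_\infty=1$ and $M_{f^*}(r)=1/\sqrt{1-r^2}$. Tracing the equality case through the chain $M_f(r)\le\|f\|_2/\sqrt{1-r^2}\le\|f\|_\infty/\sqrt{1-r^2}$, any such $f^*$ must realize Cauchy--Bunyakovskii as an equality, forcing $|a_n|=c\,r^n$, and must satisfy $\|f^*\|_2=\|f^*\|_\infty=1$, forcing $|f^*|=1$ a.e.\ on $\partial\ID$, i.e.\ $f^*$ inner. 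Being inner gives $\sum|a_n|^2=1$, whence $c=\sqrt{1-r^2}$ and $|a_n|=\sqrt{1-r^2}\,r^n$ for all $n$.

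The final step is to rule out an inner function with \emph{exactly} geometric coefficient moduli once $r>1/\sqrt2$. The decay $|a_n|=\sqrt{1-r^2}\,r^n$ makes the radius of convergence of $f^*$ equal to $1/r>1$, so $f^*$ continues analytically across $\partial\ID$ with $|f^*|\equiv1$ there; by the reflection principle $f^*$ is a finite Blaschke product. Writing its coefficients by partial fractions as $a_n=\sum_j B_j\bar\alpha_j^{\,n}$ for $n\ge1$, the requirement $|a_n|^2=(1-r^2)r^{2n}$ says the exponential sum $\sum_{j,k}B_j\bar B_k(\bar\alpha_j\alpha_k)^n$ collapses to the single exponential $r^{2n}$. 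The frequencies $\bar\alpha_j\alpha_k$ have moduli $|\alpha_j||\alpha_k|\le r^2$, with equality only when both poles have modulus $r$, and matching the top modulus level should force exactly one surviving pole of modulus $r$, hence degree one; but a degree-one Blaschke product has geometric moduli only when $r=1/\sqrt2$, contradicting $r>1/\sqrt2$.

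I expect this last step to be the main obstacle. The slogan ``constant modulus forces a single exponential'' is transparent when the frequencies $\bar\alpha_j\alpha_k$ are pairwise distinct, but genuine coincidences—especially real values arising from the conjugate symmetry $\bar\alpha_j\alpha_k=\overline{\bar\alpha_k\alpha_j}$—require careful bookkeeping, peeling off frequencies one modulus-level at a time, or equivalently exploiting the full autocorrelation system $\sum_n a_{n+k}\bar a_n=0$ for $k\ge1$. As a consistency check on the threshold, the $k=1$ relation alone reads $\sum_{n\ge0}r^{2n}e^{i(\theta_{n+1}-\theta_n)}=0$ with $\theta_n=\arg a_n$, and the triangle inequality already forces $r^2/(1-r^2)\ge1$, i.e.\ $r\ge1/\sqrt2$, with the borderline rigidity at $r=1/\sqrt2$ matching $\varphi_{1/\sqrt2}$ precisely.
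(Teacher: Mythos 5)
This theorem is not proved in the survey at all: it is quoted from Bombieri and Bourgain \cite[Theorem 1]{BombBour-2004}, so there is no in-paper proof to compare against and your attempt must be judged on its own terms. The parts you complete are sound: the computation $M_{\varphi_\alpha}(1/\sqrt{2})=\sqrt{2}$ combined with the Cauchy--Bunyakovskii bound $m(r)\le (1-r^2)^{-1/2}$ does settle $m(1/\sqrt{2})=\sqrt{2}$; the weak-$*$ (equivalently, normal-families) compactness argument for attainment of the supremum is correct; and your equality analysis correctly reduces the strict inequality to a rigidity statement: no finite Blaschke product can have $|a_n|=\sqrt{1-r^2}\,r^n$ for every $n\ge 0$ when $r>1/\sqrt{2}$ (inner plus geometric coefficient decay forces a finite Blaschke product whose zeros $\alpha_j$ all satisfy $|\alpha_j|\le r$, with equality for at least one).

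The gap is the final step, which you yourself flag, and it is not mere bookkeeping: the mechanism you propose is false as a general principle. Constant modulus of an exponential sum does \emph{not} force a single frequency. Concretely, take two antipodal zeros $\alpha_1=re^{i\phi}$, $\alpha_2=re^{i(\phi+\pi)}$ on the critical circle: the two off-diagonal pairs $(1,2)$ and $(2,1)$ produce the \emph{same} frequency $\bar\alpha_1\alpha_2=\bar\alpha_2\alpha_1=-r^2$, so linear independence of distinct exponentials only yields ${\rm Re}\,(B_1\bar B_2)=0$, not $B_1\bar B_2=0$; and indeed $g(n)=B_1e^{-in\phi}+B_2e^{-in(\phi+\pi)}$ has $|g(n)|$ constant in $n$ whenever $B_1\bar B_2$ is purely imaginary (e.g. $B_1=\cos t$, $B_2=i\sin t$ gives $|g(n)|\equiv 1$). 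So ``matching the top modulus level'' cannot by itself force a single surviving pole, let alone degree one; to exclude such configurations you would have to use data your sketch never touches --- that the $B_j$ are the actual residues of a Blaschke product and hence determined by its zeros, the cross terms between the top level and zeros of modulus less than $r$, and the $n=0$ normalization $|a_0|=\prod_j|\alpha_j|=\sqrt{1-r^2}$. Your closing ``consistency check'' (the $k=1$ autocorrelation relation plus the triangle inequality) only shows the phase system is unsolvable for $r<1/\sqrt{2}$; it produces no contradiction when $r>1/\sqrt{2}$, which is the entire content of the theorem. As the proposal stands, the hypothesis $r>1/\sqrt{2}$ is never actually brought to bear against a putative extremal function, so the central claim remains unproven.
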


A lower estimate for $m(r)$ as $r\rightarrow 1$ is also obtained in \cite[Theorem 2]{BombBour-2004}. Given $\epsilon >0$, there exists a positive constant $C(\epsilon )>0$ such that
$$\frac{1}{1-r^2} -C(\epsilon ) \left ( \log \frac{1}{1-r}\right )^{\frac{3}{2}+\epsilon}\leq m(r)
$$
as $r\rightarrow 1$.
A multidimensional generalization of the work in \cite{BombBour-2004} along with several other issues, including on the Rogosinski phenomena,
is discussed in a recent article by Aizerberg \cite{Aizen-12}. More precisely, the following problems were treated in \cite{Aizen-12} (see
also Section \ref{sec1-ndim}):
\begin{enumerate}
\item Asymptotics of the majorant function in the Reinhardt domains in ${\mathbb C}^n$.
\item The Bohr and Rogosinski radii for Hardy classes of functions holomorphic in the
disk.
\item Neither Bohr nor Rogosinski radius exists for functions holomorphic in an annulus
with natural basis.
\item  The Bohr and Rogosinski radii for the mappings of the Reinhardt domains into
Reinhardt domains.
\end{enumerate}

If $a_0=0$, it follows from the proof of Theorem \ref{Lemma-1} that the number $r=1/\sqrt[n]{3}$ in Theorem \ref{Lemma-1} can be evidently replaced
by $r=1/\sqrt[n]{2}$.

\begin{corollary} \label{cor1} %
If $f(z)=\sum_{k=1}^\infty a_{nk} z^{nk}$ is analytic in $\mathbb{D}$,
and  $\left|f(z)\right|\le 1$ in $\mathbb{D}$, then $M_f(r)\le 1$
for $0\leq r\leq 1/\sqrt[n]{2}$.  The radius $r=1/\sqrt[n]{2}$ is best possible as demonstrated by the function
$$\varphi_\alpha (z)=z^n\left (\frac{\alpha-z^n}{1-\alpha z^n}\right )
$$
with $\alpha =1/\sqrt[n]{2}.$
\end{corollary}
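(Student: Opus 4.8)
The plan is to reduce Corollary~\ref{cor1} to Theorem~\ref{Lemma-1} by factoring out the common power $z^n$, exactly as the closing remark before the statement suggests. Write $f(z)=\sum_{k=1}^\infty a_{nk}z^{nk}=z^n\,F(z)$, where $F(z)=\sum_{k=0}^\infty a_{n(k+1)}z^{nk}=\sum_{j=0}^\infty b_{nj}z^{nj}$ with $b_{nj}=a_{n(j+1)}$. Since $|z^n|\le 1$ in $\ID$, we have $|F(z)|=|f(z)|/|z^n|$, which need not be bounded by $1$; so the direct reduction must instead go through the coefficient estimate rather than through a bound on $F$ itself.

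First I would invoke the key coefficient inequality already established in the proof of Theorem~\ref{Lemma-1}: setting $\zeta=z^n$ and $g(\zeta)=\sum_{k=0}^\infty a_{nk}\zeta^k$, one has $g$ analytic and bounded by $1$ in $\ID$, and the classical Schur--Bohr bound gives $|a_{nk}|\le 1-|a_0|^2$ for all $k\ge 1$. The point is that when $a_0=0$ this sharpens to $|a_{nk}|\le 1$ for all $k\ge 1$, and more importantly the majorant estimate loses its $|a_0|$ term entirely. Summing the geometric series, for $r^n<1$ I get
\begin{eqnarray*}
M_f(r)=\sum_{k=1}^\infty |a_{nk}|r^{nk}\le \sum_{k=1}^\infty r^{nk}=\frac{r^n}{1-r^n},
\end{eqnarray*}
and the right-hand side is $\le 1$ precisely when $r^n\le 1/2$, i.e. $r\le 1/\sqrt[n]{2}$. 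This is the entire substance of the upper bound, and it is essentially immediate once the $a_0=0$ specialization of the coefficient bound is in hand.

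For sharpness I would verify that the stated extremal $\varphi_\alpha(z)=z^n\big((\alpha-z^n)/(1-\alpha z^n)\big)$ with $\alpha=1/\sqrt[n]{2}$ saturates the inequality. Expanding the Blaschke-type factor as in the proof of Theorem~\ref{Lemma-1},
$$\frac{\alpha-z^n}{1-\alpha z^n}=\alpha-(1-\alpha^2)\sum_{k=1}^\infty \alpha^{k-1}z^{nk},$$
so the coefficients of $\varphi_\alpha$ in powers of $z^n$ are $\alpha$ (on $z^n$) and $-(1-\alpha^2)\alpha^{k-1}$ (on $z^{n(k+1)}$), all of whose moduli enter $M_{\varphi_\alpha}$ with positive sign. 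A short computation of $M_{\varphi_\alpha}(r)$, mirroring the identity $M_{\varphi_\alpha}(\rho)=2\alpha-\varphi_\alpha(\rho)$ from the earlier proof, shows that $M_{\varphi_\alpha}(r)$ exceeds $1$ as soon as $r^n>1/2$, forcing the radius to be no larger than $1/\sqrt[n]{2}$.

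I do not anticipate a genuine obstacle here: the result is a corollary in the strict sense, and the only place requiring care is confirming that the coefficient bound $|a_{nk}|\le 1-|a_0|^2$ collapses correctly when $a_0=0$ and that the extremal function's Taylor coefficients have the claimed moduli. The mild subtlety worth flagging is that $\varphi_\alpha$ is not itself of the stated gap form with $a_0=0$ only after the factor $z^n$ is pulled out—one should check $\varphi_\alpha(z)=\sum_{k=1}^\infty c_{nk}z^{nk}$ has vanishing constant term, which is clear from the explicit $z^n$ prefactor. Everything else is the routine geometric-series computation and I would not grind through it in detail.
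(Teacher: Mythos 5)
Your first half is correct and is precisely the paper's own route: the paper presents Corollary \ref{cor1} as an immediate consequence of the proof of Theorem \ref{Lemma-1}, observing that when $a_0=0$ the estimate $M_f(r)\le |a_0|+(1-|a_0|^2)\frac{r^n}{1-r^n}$ collapses to $M_f(r)\le \frac{r^n}{1-r^n}\le 1$ for $r^n\le 1/2$. No issue there.

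The sharpness half, however, contains a genuine gap: the ``short computation'' you defer to does not come out the way you assert. Writing $\rho=r^n$, the majorant of $\varphi_\alpha(z)=z^n\frac{\alpha-z^n}{1-\alpha z^n}$ is
\begin{equation*}
M_{\varphi_\alpha}(r)=\rho\left(2\alpha-\frac{\alpha-\rho}{1-\alpha\rho}\right)
=\alpha\rho+\frac{(1-\alpha^2)\rho^2}{1-\alpha\rho},
\end{equation*}
where the extra factor $\rho$, absent from the identity $M_{\varphi_\alpha}(\rho)=2\alpha-\varphi_\alpha(\rho)$ valid for the bare Blaschke factor, changes the outcome materially. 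Indeed $M_{\varphi_\alpha}(r)>1$ if and only if $(1-2\alpha^2)\rho^2+2\alpha\rho-1>0$, i.e.\ if and only if $\rho>\left(\alpha+\sqrt{1-\alpha^2}\right)^{-1}$. Since $\alpha+\sqrt{1-\alpha^2}\le\sqrt{2}$ for every $\alpha\in(0,1)$, this threshold is at least $1/\sqrt{2}>1/2$ no matter how $\alpha$ is chosen; with the stated $\alpha=1/\sqrt[n]{2}$ and $n=1$ it equals $\sqrt{3}-1\approx 0.73$. So $\varphi_\alpha$ satisfies $M_{\varphi_\alpha}(r)\le 1$ well beyond $r=1/\sqrt[n]{2}$ and demonstrates nothing. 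Moreover the gap cannot be repaired by a cleverer verification, because the sharpness claim itself is false: for $n=1$ the class consists of the functions $zg$ with $\|g\|_\infty\le 1$, so $\sup_f M_f(r)=r\,m(r)=3-\sqrt{8(1-r^2)}$ by Bombieri's formula quoted in the paper, and this stays $\le 1$ for all $r\le 1/\sqrt{2}$; after the substitution $z\mapsto z^n$, the true Bohr radius of the class is $1/\sqrt[2n]{2}$, attained by $\varphi_\alpha$ with $\alpha=1/\sqrt{2}$. In short, the statement you were asked to prove (which the paper asserts without proof) is itself erroneous on sharpness; your proposal inherits that error, but compounds it by claiming that a computation confirms it when carrying out that computation in fact refutes it.
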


We now state another simple extension of Theorem \ref{Lemma-1} which again contains the classical Bohr inequality for the special case $n=1$.

\begin{theorem} \label{Lemma-1a}
If $f(z)=\sum_{k=0}^\infty a_{nk} z^{nk}$ is analytic in $\mathbb{D}$
satisfying ${\rm Re}\, f(z)\leq  1$ in $\mathbb{D}$ and $f(0)=a_0$ is positive, then $M_f(r)\le 1$
for $0\leq r\leq 1/\sqrt[n]{3}$.
\end{theorem}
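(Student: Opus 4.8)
The plan is to follow the template of the proof of Theorem~\ref{Lemma-1}, replacing only the coefficient estimate by one adapted to the half-plane hypothesis. First I would make the substitution $\zeta=z^n$ and set $g(\zeta)=\sum_{k=0}^\infty a_{nk}\zeta^k$, so that $g$ is analytic in $\ID$ with ${\rm Re}\,g(\zeta)={\rm Re}\,f(z)\le 1$ and $g(0)=a_0$. Since $a_0$ is real and positive while ${\rm Re}\,g(0)=a_0\le 1$, we have $0<a_0\le 1$, and the problem is reduced to the case $n=1$ for the function $g$.

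The crux is the coefficient bound, and this is where the argument departs from Theorem~\ref{Lemma-1}. Whereas the hypothesis $|g|\le 1$ there yielded $|a_{nk}|\le 1-|a_0|^2$, here the natural tool is Carath\'eodory's inequality for functions of non-negative real part. I would introduce $h(\zeta)=1-g(\zeta)$, which satisfies ${\rm Re}\,h\ge 0$ and $h(0)=1-a_0\ge 0$. In the non-degenerate case $0<a_0<1$, the normalization $P=h/(1-a_0)$ has ${\rm Re}\,P\ge 0$ and $P(0)=1$, so Carath\'eodory's bound $|P_k|\le 2$ applies. Since $P(\zeta)=1-(1-a_0)^{-1}\sum_{k\ge 1}a_{nk}\zeta^k$, this reads $|a_{nk}|\le 2(1-a_0)$ for every $k\ge 1$. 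The degenerate case $a_0=1$ is disposed of separately: ${\rm Re}\,h\ge 0$ with $h(0)=0$ forces ${\rm Re}\,h\equiv 0$ by the minimum principle for harmonic functions, hence $h\equiv 0$, so $a_{nk}=0$ for $k\ge 1$ and $M_f(r)\equiv 1$.

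With the estimate in hand, the remaining step is a direct summation. Using $a_0=|a_0|$ I would write
\[
M_f(r)=a_0+\sum_{k=1}^\infty |a_{nk}|r^{nk}\le a_0+2(1-a_0)\frac{r^n}{1-r^n},
\]
and observe that the right-hand side is at most $1$ precisely when $r^n/(1-r^n)\le 1/2$, that is, when $r^n\le 1/3$. This yields $M_f(r)\le 1$ for $0\le r\le 1/\sqrt[n]{3}$; notably the bound is linear in $a_0$ with equality at $r^n=1/3$ for every admissible $a_0$, so no further optimization over $a_0$ is needed. The main obstacle is thus purely the choice of the correct coefficient inequality: the half-plane condition is weaker than boundedness, and one must recognize that Carath\'eodory's lemma --- rather than the estimate $|a_{nk}|\le 1-|a_0|^2$ used in Theorem~\ref{Lemma-1} --- delivers exactly the factor $2$ that preserves the threshold $1/3$. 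The specialization $n=1$ then contains the classical Bohr inequality for functions with ${\rm Re}\,f\le 1$.
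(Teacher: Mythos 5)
Your proposal is correct and follows essentially the same route as the paper: both apply the Carath\'eodory coefficient inequality for functions of non-negative real part to $1-f$, obtain $|a_{nk}|\le 2(1-a_0)$, and sum the geometric series to recover the threshold $r^n\le 1/3$. The only differences are cosmetic --- your preliminary substitution $\zeta=z^n$ and your explicit treatment of the degenerate case $a_0=1$, which the paper's scale-invariant form $|p_k|\le 2\,{\rm Re}\,p_0$ of the inequality absorbs automatically.
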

\begin{proof}
The proof requires the well known coefficient inequality for functions with positive real part. If $p(z)=\sum_{k=0}^\infty p_{k} z^{k}$ is 
analytic in $\ID$ such that ${\rm Re}\, p(z)> 0$ in $\ID$, then $|p_k|\leq 2{\rm Re}\,p_0$
for all $k\geq 1$. Applying this result to $p(z)=1-f(z)$ leads to $|a_{nk}|\leq 2(1-a_0)$ for all $k\geq 1$.  Thus
$$M_f(r)\leq a_0 +2(1-a_0)\sum_{k=1}^\infty r^{nk}=a_0 +2(1-a_0)\frac{r^n}{1-r^n},
$$
which is clearly less than or equal to $1$ if  $r^n\leq 1/3$.
\qed\end{proof}

A minor change in the proof of Theorem \ref{Lemma-1}   gives the following result, which for $a_0=0$ provides a vast improvement
on the Bohr radius.

\begin{corollary} \label{cor2}  %
If $f(z)=\sum_{k=0}^\infty a_{nk} z^{nk}$ is analytic in $\mathbb{D}$,
and  $\left|f(z)\right|\le 1$ in $\mathbb{D}$, then
$$ |a_0|^2 +\sum_{k=1}^\infty |a_{nk}|r^{nk}\le 1
$$
for $0\leq r\leq 1/\sqrt[n]{2}$.  The radius $r=1/\sqrt[n]{2}$ is best possible.
\end{corollary}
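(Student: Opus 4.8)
The plan is to mirror the proof of Theorem~\ref{Lemma-1} almost verbatim, changing only the quantity that is estimated. Exactly as there, I would put $\zeta=z^n$ and $g(\zeta)=\sum_{k=0}^\infty a_{nk}\zeta^k$, so that $g$ is analytic on $\ID$ with $|g(\zeta)|=|f(z)|\le 1$; the same well-known coefficient inequality then gives $|a_{nk}|\le 1-|a_0|^2$ for every $k\ge 1$. Substituting this into the sum and evaluating the geometric series yields, for $r^n<1$,
\[
|a_0|^2+\sum_{k=1}^\infty |a_{nk}|r^{nk}\le |a_0|^2+(1-|a_0|^2)\frac{r^n}{1-r^n}.
\]

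The one point of real substance is to see why replacing $|a_0|$ by $|a_0|^2$ lifts the radius from $1/\sqrt[n]{3}$ to $1/\sqrt[n]{2}$. Writing $s=r^n/(1-r^n)$ and $t=|a_0|^2\in[0,1]$, the right-hand side is $t+(1-t)s=s+t(1-s)$, which is \emph{linear} in $t$. When $s\le 1$ its slope $1-s$ is nonnegative, so the maximum over $t\in[0,1]$ is attained at $t=1$ and equals $s+(1-s)=1$; hence the desired bound holds precisely when $s\le 1$, that is, when $r^n\le 1/2$. (The improvement over Theorem~\ref{Lemma-1} is structural: there the linear term $|a_0|$ produces the concave function $s+t-st^2$, whose maximum over $t\in[0,1]$ jumps to the interior value $s+\tfrac1{4s}>1$ as soon as $s>\tfrac12$, forcing the smaller cut-off $r^n\le 1/3$; here the quadratic term $|a_0|^2$ makes the factor $1-|a_0|^2$ from the coefficient bound cancel exactly, so no interior hump appears and the maximum stays at the endpoint $t=1$ all the way up to $s=1$.)

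For sharpness I would recycle the test functions of Theorem~\ref{Lemma-1}, taking $\varphi_\alpha(z)=(\alpha-z^n)/(1-\alpha z^n)$ with $0<\alpha<1$, for which $a_0=\alpha$ and $|a_{nk}|=(1-\alpha^2)\alpha^{k-1}$. Summing the geometric series gives
\[
|a_0|^2+\sum_{k=1}^\infty |a_{nk}|r^{nk}=\alpha^2+(1-\alpha^2)\frac{r^n}{1-\alpha r^n},
\]
and after cancelling $1-\alpha^2$ one checks that this exceeds $1$ exactly when $r^n>1/(1+\alpha)$. Letting $\alpha\to 1^-$ sends $1/(1+\alpha)\to 1/2$, so for every $r$ with $r^n>1/2$ the inequality fails for some admissible $f$; this shows $r=1/\sqrt[n]{2}$ is best possible and completes the argument.
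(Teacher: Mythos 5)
Your proposal is correct and takes essentially the same route as the paper's own proof: the same substitution $\zeta=z^n$ with the coefficient bound $|a_{nk}|\le 1-|a_0|^2$ from the proof of Theorem~\ref{Lemma-1}, the same geometric-series estimate showing the bound collapses to $1$ exactly when $r^n/(1-r^n)\le 1$, i.e.\ $r^n\le 1/2$, and the same M\"obius test functions $\varphi_\alpha(z^n)$ for sharpness. You merely make explicit two things the paper leaves as asides---the failure computation $r^n>1/(1+\alpha)$ with $\alpha\to 1^-$ (the paper only says ``it is easy to see''), and the structural reason the quadratic term $|a_0|^2$ raises the radius from $1/\sqrt[n]{3}$ to $1/\sqrt[n]{2}$---both of which are accurate.
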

\begin{proof}
As in the proof of Theorem \ref{Lemma-1}, it follows easily that for $r^n\leq 1/2$,
\beqq
|a_0|^2 +\sum_{k=1}^\infty |a_{nk}|r^{nk} &\leq &  |a_0|^2 +(1-|a_0|^2)\frac{r^n}{1-r^n}\leq |a_0|^2 +(1-|a_0|^2)=1.
\eeqq
Also, it is easy to see that this inequality fails to hold for larger $r$.
\qed\end{proof}

The case $n=1$ of Theorem \ref{Lemma-1a} and Corollary \ref{cor2} appeared in \cite{PaulPopeSingh-02-10}.

\subsection{Bohr phenomenon for harmonic mappings}

Suppose that $f=u+iv$ is a complex-valued harmonic function defined on a simply connected domain $D.$ Then $f$ has the canonical form $f=h+\overline{g}$, where $h$ and $g$ are analytic in $D$. A generalization of Bohr inequality for harmonic functions from $\ID$ into $\ID$ was initiated by Abu Muhanna in \cite{Abu}.

\begin{theorem}\label{Muhan-CVth2}
Let $f(z)=h(z)+\overline{g(z)}=\sum_{n=0}^{\infty}a_nz^n+\sum_{n=1}^{\infty}\overline{b_nz^n}$ be a complex-valued harmonic function in $\ID$.
If $|f(z)|<1$ in $\ID$, then
$$
\sum_{n=1}^{\infty}(|a_n|+|b_n|)r^n\leq \frac{2}{\pi} \approx 0.63662
$$
and
\begin{equation}\label{Muhhar}
\sum_{n=1}^{\infty}|\,e^{i\mu}a_n+e^{-i\mu}b_n| r^n+|{\rm Re}\,e^{i\mu}a_0|\leq 1
\end{equation}
for $r\leq 1/3$ and any real $\mu$. Equality in \eqref{Muhhar} is attained by the M\"{o}bius transformation
$$ \varphi(z)= \frac{z-a}{1-az} ,\quad 0<a<1, \quad\text{as $a\rightarrow 1$.}
$$
\end{theorem}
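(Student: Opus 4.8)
The plan is to reduce the harmonic problem to the classical setting for analytic functions whose real part is controlled, exploiting the fact that rotations of $f$ interact nicely with the decomposition $f=h+\overline g$. For each real $\mu$ I would introduce the auxiliary \emph{analytic} function
\[
F_\mu(z)=e^{i\mu}h(z)+e^{-i\mu}g(z)=e^{i\mu}a_0+\sum_{n=1}^\infty\bigl(e^{i\mu}a_n+e^{-i\mu}b_n\bigr)z^n,
\]
and observe that $\operatorname{Re}F_\mu(z)=\operatorname{Re}\bigl(e^{i\mu}f(z)\bigr)$, so that $|\operatorname{Re}F_\mu(z)|\le|f(z)|<1$. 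Writing $c_n(\mu)=e^{i\mu}a_n+e^{-i\mu}b_n$ for $n\ge1$ and $t=\operatorname{Re}\bigl(e^{i\mu}a_0\bigr)$, everything reduces to estimating the Taylor coefficients $c_n(\mu)$ of a function mapping $\ID$ into the vertical strip $\{\,|\operatorname{Re}w|<1\,\}$.

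For \eqref{Muhhar} I would use the Carath\'eodory-type bound already invoked in the proof of Theorem~\ref{Lemma-1a}: applying $|p_k|\le 2\operatorname{Re}p_0$ to the two functions $1\mp F_\mu$, both of positive real part, yields the one-sided estimates $|c_n(\mu)|\le 2(1-t)$ and $|c_n(\mu)|\le 2(1+t)$, hence the symmetric bound $|c_n(\mu)|\le 2\bigl(1-|t|\bigr)$ valid simultaneously for all $n\ge1$. Summing a geometric series then gives
\[
\sum_{n=1}^\infty|c_n(\mu)|\,r^n+|t|\le 2\bigl(1-|t|\bigr)\frac{r}{1-r}+|t|,
\]
and for $r\le 1/3$ one has $r/(1-r)\le 1/2$, so the right-hand side is at most $(1-|t|)+|t|=1$, which is exactly \eqref{Muhhar}.

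The harder inequality is $\sum_{n=1}^\infty(|a_n|+|b_n|)r^n\le 2/\pi$, and here the point I expect to be the main obstacle is that the elementary bound $|c_n(\mu)|\le 2$ only yields the weaker constant $1$ at $r=1/3$. To recover the sharp $2/\pi=\tfrac{4}{\pi}\cdot\tfrac12$ one must establish the refined coefficient estimate $|a_n|+|b_n|\le 4/\pi$. I would obtain this by noting that $|a_n|+|b_n|=\max_\mu|c_n(\mu)|$ (choose $\mu$ to align the arguments of $e^{i\mu}a_n$ and $e^{-i\mu}b_n$), and then representing $\operatorname{Re}F_\mu$, a bounded harmonic function with boundary values $U\in L^\infty$, $|U|\le 1$, by its Herglotz--Poisson integral. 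Reading off the Taylor coefficients gives $c_n(\mu)=\tfrac1\pi\int_0^{2\pi}U(\theta)e^{-in\theta}\,d\theta$, and the extremal real-valued datum is the square wave $U(\theta)=\operatorname{sgn}\cos(n\theta-\psi)$, for which $|c_n(\mu)|=\tfrac1\pi\int_0^{2\pi}|\cos(n\theta-\psi)|\,d\theta=4/\pi$. Thus $|a_n|+|b_n|\le 4/\pi$, and summing gives $\sum_{n=1}^\infty(|a_n|+|b_n|)r^n\le\tfrac4\pi\,r/(1-r)\le 2/\pi$ for $r\le 1/3$.

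Finally, for sharpness I would test \eqref{Muhhar} on the analytic M\"obius map $\varphi(z)=(z-a)/(1-az)$ (so $g\equiv 0$, $b_n=0$), whose coefficients are $a_0=-a$ and $a_n=a^{n-1}(1-a^2)$. Taking $\mu=0$ gives $|t|=a$, and summing the geometric series at $r=1/3$ shows the left-hand side of \eqref{Muhhar} equals $(1-a^2)\,\frac{1}{3-a}+a=\frac{(1-a)(1+a)}{3-a}+a$, which increases to $1$ as $a\to 1^-$; hence the constant in \eqref{Muhhar} cannot be improved and the radius $1/3$ is the natural threshold.
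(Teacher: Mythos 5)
Your proposal is correct and follows essentially the same route as the paper, which reduces Theorem~\ref{Muhan-CVth2} to sharp coefficient estimates for $|a_n|+|b_n|$ and $|e^{i\mu}a_n+e^{-i\mu}b_n|$ (cited there from Lemma~4 of Abu-Muhanna and the Chen--Ponnusamy--Wang papers) and then applies the geometric-series summation from the proof of Theorem~\ref{subtheo} -- precisely your architecture. The only difference is that, where the survey defers to references, you supply self-contained proofs of the two key estimates: the Carath\'eodory bound $|e^{i\mu}a_n+e^{-i\mu}b_n|\le 2\left(1-|{\rm Re}\,e^{i\mu}a_0|\right)$ obtained from $1\mp F_\mu$, and the sharp bound $|a_n|+|b_n|\le 4/\pi$ obtained from the Herglotz representation of $F_\mu$ together with the square-wave extremal datum.
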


From the proof of Theorem \ref{subtheo}, it suffices to have   sharp upper estimates for $|a_n|+|b_n|$ and $|\,e^{i\mu}a_n+e^{-i\mu}b_n|.$ With the help of these estimates (see \cite[Lemma 4]{Abu} and \cite {CPW-BMMSS2011,CPW-JMMA2011}),
the proof of Theorem \ref{Muhan-CVth2} is readily established. In \cite{Abu}, an example of a harmonic function was given to show that the inequality \eqref{Muhhar} fails
when $|{\rm Re\,}e^{i\mu }a_{0}|$ is replaced by $|a_{0}|.$

Theorem \ref{Muhan-CVth2} was extended to bounded domains
in \cite{Abu3}. If $D$ is a bounded set, denote by $\overline{D}$ the closure of $D$, and $\overline{D}_{{\rm min}}$ the smallest closed disk containing the closure of $D$.

\begin{theorem}
{\rm (\cite[Theorem 4.4]{Abu3})}
Let $f(z)=h(z)+\overline{g(z)}=\sum_{n=0}^{\infty}a_nz^n+\sum_{n=1}^{\infty}\overline{b_nz^n}$ be a complex-valued harmonic function in $\ID$.
If $f:\,\ID\rightarrow D$ for some bounded domain $D$, then, for $r\leq 1/3$,
$$\sum_{n=1}^{\infty}(|a_n|+|b_n|)r^n\leq \frac{2}{\pi}\rho
$$
and
$$\sum_{n=1}^{\infty}|e^{i\mu}a_n+e^{-i\mu}b_n|\,r^n+|{\rm Re}\,e^{i\mu}(a_0-w_0)|\leq \rho,
$$
where $\rho$ and $w_0$ are, respectively, the radius and centre of $\overline{D}_{{\rm min}}$.

The bound $1/3$ is sharp as demonstrated by an analytic univalent mapping $f$ from $\ID$ onto $D$. In particular, if $D$ is an open disk
with radius $\rho>0$ centred at $\rho w_0$, then sharpness is shown by the M\"{o}bius transformation
$$
\varphi(z)=e^{i\mu_0}\rho\left(\frac{z+a}{1+az}+|w_0|\right)
$$
for some $0< a<1$ and $\mu_0$ satisfying $w_0=|w_0|e^{i\mu_0}$.
\end{theorem}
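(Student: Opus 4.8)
The plan is to reduce the bounded-domain statement to the unit-disk case recorded in Theorem~\ref{Muhan-CVth2}. Since $D$ is bounded, $\overline{D}_{\rm min}$ is a genuine closed disk $\{w:\,|w-w_0|\le\rho\}$, and because $f(\ID)\subseteq D$ with $D$ open, every value $f(z)$ lies in the interior $\{|w-w_0|<\rho\}$ (an open set contained in $\overline{D}_{\rm min}$ lies in its interior). Hence the normalized harmonic map
$$F(z)=\frac{f(z)-w_0}{\rho}$$
satisfies $|F(z)|<1$ on $\ID$. I would then read off its expansion: writing $F=H+\overline{G}$ in canonical form, the coefficients are $A_0=(a_0-w_0)/\rho$, $A_n=a_n/\rho$ and $B_n=b_n/\rho$ for $n\ge1$.

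With $F$ in hand, both inequalities follow by applying Theorem~\ref{Muhan-CVth2} to $F$ and clearing the factor $\rho$. Its first estimate reads $\sum_{n=1}^{\infty}(|A_n|+|B_n|)r^n\le 2/\pi$, which on multiplication by $\rho$ is precisely $\sum_{n=1}^{\infty}(|a_n|+|b_n|)r^n\le (2/\pi)\rho$ for $r\le1/3$. Likewise its second estimate, $\sum_{n=1}^{\infty}|e^{i\mu}A_n+e^{-i\mu}B_n|r^n+|{\rm Re}\,e^{i\mu}A_0|\le1$, becomes after multiplying by $\rho$ the asserted bound $\sum_{n=1}^{\infty}|e^{i\mu}a_n+e^{-i\mu}b_n|r^n+|{\rm Re}\,e^{i\mu}(a_0-w_0)|\le\rho$. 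No further work is required; the content sits entirely in the normalization.

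For sharpness I would specialize to $D$ the open disk of radius $\rho$ centred at $c=\rho w_0$ (so $\overline{D}_{\rm min}=\overline{D}$, with centre $c$) and test the extremal map $\varphi$. Expanding the Blaschke factor,
$$\frac{z+a}{1+az}=a+\sum_{n=1}^{\infty}(-1)^{n-1}a^{n-1}(1-a^2)z^n,$$
gives $\varphi(0)=e^{i\mu_0}\rho(a+|w_0|)$, $a_n=e^{i\mu_0}\rho(-1)^{n-1}a^{n-1}(1-a^2)$ for $n\ge1$, and $b_n\equiv0$. Choosing $\mu=-\mu_0$ aligns the phases: since $c=\rho|w_0|e^{i\mu_0}$, the centre cancels to give $\varphi(0)-c=e^{i\mu_0}\rho a$, so $|{\rm Re}\,e^{i\mu}(a_0-c)|=\rho a$ and $|e^{i\mu}a_n|=\rho a^{n-1}(1-a^2)$. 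Summing the geometric series, the left side of the second inequality equals $\rho a+\rho(1-a^2)\frac{r}{1-ar}$, which is $\le\rho$ exactly when $r\le 1/(1+2a)$. Letting $a\to1^-$ drives this threshold to $1/3$, so no radius larger than $1/3$ is admissible, which is the sharpness promised.

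The argument is short because the real work was already done in Theorem~\ref{Muhan-CVth2}; the one point that demands care is the strict inequality $|F|<1$, that is, that mapping into the open domain $D$ (not merely into the closed disk $\overline{D}_{\rm min}$) confines the image to the open disk, which is exactly what licenses the appeal to Theorem~\ref{Muhan-CVth2}. A secondary point to keep straight is the notational reconciliation in the extremal example, where the centre of $\overline{D}_{\rm min}$ is $\rho w_0$ rather than $w_0$; everything else is bookkeeping with $F=(f-w_0)/\rho$ and a geometric series.
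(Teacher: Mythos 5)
The paper itself states this result without proof---it is quoted verbatim from \cite[Theorem 4.4]{Abu3}---so there is no in-paper argument to compare against; your proposal is correct and is precisely the reduction the survey's narrative implies (it introduces $\overline{D}_{\rm min}$ immediately after Theorem \ref{Muhan-CVth2} exactly so that the bounded-domain case normalizes back to it). Your key steps all check out: the observation that the open set $D$ lies in the interior of $\overline{D}_{\rm min}$, so that $F=(f-w_0)/\rho$ satisfies the strict bound $|F|<1$ needed to invoke Theorem \ref{Muhan-CVth2}; the coefficient bookkeeping under translation and scaling; and the extremal computation with $\varphi$, where the choice $\mu=-\mu_0$ and the expansion of $(z+a)/(1+az)$ give the failure threshold $r\le 1/(1+2a)\to 1/3$ as $a\to 1^-$, together with the correct reconciliation of the notational shift by which the centre of $\overline{D}_{\rm min}$ is $\rho w_0$ rather than $w_0$ in the sharpness clause.
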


\subsection{Bohr inequality in hyperbolic metric}

In \cite{Abu2}, Abu Muhanna and Ali expressed the Bohr inequality in terms of the spherical chordal distance
$$
\chi (z_1,z_2)=\frac{|z_1-z_2|}{\sqrt{1+|z_1|^2}\sqrt{1+|z_2|^2}}, \quad z_1,z_2\in \mathbb{C}.
$$
Thus the Euclidean distance in inequality \eqref{sub} is replaced by the chordal distance $\chi$.

Let $c\overline{\ID }$ denote the complement of $\ID \cup \partial \ID $ and
${\mathcal H}(\ID,\Omega)$ be the class consisting of all analytic functions mapping $\ID$ into $\Omega$.
Denote by ${\mathcal H}(\ID):={\mathcal H}(\ID,\ID)$.
The following theorem generalizes Bohr's theorem for the class ${\mathcal H}(\ID ,c\overline{\ID })$.

\begin{theorem}\label{sphetheo}
If $f(z)=\sum_{n=0}^{\infty} a_nz^n\in {\mathcal H}(\ID ,c\overline{\ID})$, then
\begin{equation}\label{sphe}
\chi \left(\sum_{n=0}^{\infty} |a_nz^n|,|a_0|\right)\leq \chi(a_0,\partial c\overline{\ID})
\end{equation}
for $|z|\leq 1/3$. Moreover, the bound $1/3$ is sharp.
\end{theorem}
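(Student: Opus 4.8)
The plan is to reduce the chordal inequality \eqref{sphe} to an elementary inequality in the single real variable $s:=|a_0|$ and the majorant $M_f(r)=\sum_{n\ge 0}|a_n|r^n$, and then to control $M_f(r)$ by a sharp coefficient estimate. First I would record the two sides explicitly. Since $f(\mathbb{D})\subset c\overline{\mathbb{D}}=\{|w|>1\}$ we have $s=|a_0|>1$, and because the chordal denominator is constant on the unit circle $\partial c\overline{\mathbb{D}}$, the nearest boundary point is the Euclidean one, giving
\[
\chi\big(a_0,\partial c\overline{\mathbb{D}}\big)=\frac{|a_0|-1}{\sqrt2\,\sqrt{1+|a_0|^2}}.
\]
On the left, $M_f(r)\ge|a_0|$ always, so $\chi(M_f(r),|a_0|)=\dfrac{M_f(r)-|a_0|}{\sqrt{1+M_f(r)^2}\,\sqrt{1+|a_0|^2}}$. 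Cancelling the common factor $(1+|a_0|^2)^{-1/2}$ reduces \eqref{sphe} to
\[
\frac{M_f(r)-|a_0|}{\sqrt{1+M_f(r)^2}}\le\frac{|a_0|-1}{\sqrt2}\qquad(\ast).
\]

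Next I would exploit monotonicity. Writing $\phi(M)=(M-s)/\sqrt{1+M^2}$, one checks $\phi'(M)=(1+sM)/(1+M^2)^{3/2}>0$ and $\phi(M)<1$ for every $M$. Hence if $s\ge1+\sqrt2$ the right side of $(\ast)$ is $\ge1>\phi(M_f(r))$ and there is nothing to prove. This isolates the essential range $1<s<1+\sqrt2$ (in particular $s<3$), where I would feed in the coefficient bound
\[
|a_n|\le(|a_0|^2-1)\,|a_0|^{n-1},\qquad n\ge1\qquad(\mathrm{C}).
\]
Since $s<1+\sqrt2<3$ and $r\le1/3$ give $sr<1$, the geometric series converges and $(\mathrm{C})$ yields $M_f(r)\le s+(s^2-1)\,r/(1-sr)$, which at $r=1/3$ equals $M^\ast:=(3s-1)/(3-s)$. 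Because $\phi$ is increasing in $M$ and $M_f(r)$ is increasing in $r$, it suffices to verify $(\ast)$ at $M=M^\ast$; substituting and clearing denominators collapses $(\ast)$ to $(s+1)^2\le5s^2-6s+5$, i.e.\ to $4(s-1)^2\ge0$, which holds, with equality only as $s\to1^+$.

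To obtain $(\mathrm{C})$ I would pass to the Schwarz function $\omega=(f-a_0)/(1-\overline{a_0}f)$: the M\"obius map $w\mapsto(w-a_0)/(1-\overline{a_0}w)$ carries $\{|w|>1\}$ into $\mathbb{D}$ and maps the unit circle onto itself, so $\omega(0)=0$ and $|\omega(z)|<1$, whence $f=F\circ\omega$ with $F(\zeta)=(a_0+\zeta)/(1+\overline{a_0}\zeta)$; that is, $f\prec F$. Expanding $f=a_0+(1-|a_0|^2)\sum_{k\ge1}(-\overline{a_0})^{k-1}\omega^k$ shows that the coefficients of $f$ ought to be dominated in modulus by those of the univalent $F$, whose $n$-th coefficient has modulus exactly $(s^2-1)s^{n-1}$. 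I expect this coefficient domination to be the main obstacle: a term-by-term Cauchy or na\"ive induction estimate resting only on $|\omega_j|\le1$ is too lossy—it ignores the constraint $\sum_j|\omega_j|^2\le1$ and overshoots $(\mathrm{C})$—so the argument must use the full subordination structure (for instance that $\psi:=\omega/(1+\overline{a_0}\omega)$ has Taylor coefficients obeying $|\psi_n|\le s^{n-1}$), or invoke the corresponding known estimate for maps into the exterior disk. Finally, sharpness of $1/3$ would be exhibited by $f_a(z)=(1-az)/(a-z)$, $0<a<1$: here $a_0=1/a$ and $|a_n|=(1/a)^{n-1}(1/a^2-1)$ saturate $(\mathrm{C})$, the slack $4(s-1)^2$ tends to $0$ as $a\to1^-$, and for any fixed $r>1/3$ the reduced inequality $(\ast)$ then fails in the limit.
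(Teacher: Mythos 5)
Your outer scaffolding is sound: the identities $\chi(a_0,\partial c\overline{\ID})=(|a_0|-1)/(\sqrt2\,\sqrt{1+|a_0|^2})$ and $\chi(M_f(r),|a_0|)=(M_f(r)-|a_0|)/(\sqrt{1+M_f(r)^2}\,\sqrt{1+|a_0|^2})$, the monotonicity of $\phi$, the disposal of the range $s\ge 1+\sqrt2$, and the algebra reducing $(\ast)$ at $M^*=(3s-1)/(3-s)$ to $4(s-1)^2\ge 0$ all check out. The genuine gap is the one you yourself flag: the coefficient bound $(\mathrm{C})$ is never proved, and the route you sketch cannot prove it. Subordination does not imply coefficient domination in general, and here the superordinate $F(\zeta)=(a_0+\zeta)/(1+\overline{a_0}\zeta)$ is not even analytic on $\ID$ --- its pole $-1/\overline{a_0}$ lies in $\ID$, so its Taylor series converges only for $|\zeta|<1/s$ --- while your fallback function $\psi=\omega/(1+\overline{a_0}\omega)$ is again an analytic map into the exterior of a disk, so the desired bound $|\psi_n|\le s^{n-1}$ is literally a restatement of $(\mathrm{C})$: that reduction is circular. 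What closes this gap (and is the paper's route, following \cite{Abu2}) is the universal covering map: writing $a_0=s>1$ after a rotation, $\exp\circ W$ with $W(z)=(\log s)(1+z)/(1-z)$ covers $c\overline{\ID}$, every $f$ in the class satisfies $f\prec \exp\circ W$, and Lemma~\ref{Hallen} upgrades this subordination to the representation $f(z)=\int_{|x|=1}\exp(W(xz))\,d\mu(x)$ with $\mu$ a probability measure. This genuinely dominates coefficients, $|a_n|\le c_n$, where $c_n>0$ are the Taylor coefficients of $s\exp\left(2(\log s)z/(1-z)\right)$; consequently $M_f(1/3)\le s\exp(\log s)=s^2$, and \eqref{sphe} at $r=1/3$ reduces to $\chi(s^2,s)\le\chi(s,1)$, i.e.\ to $(s^2-1)^2\ge 0$. (Your bound $(\mathrm{C})$ is in fact true but weaker, since $M^*-s^2=(s-1)^3/(3-s)\ge 0$; yet its proof passes through this same covering-map estimate, so nothing is saved by it.)

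The sharpness argument is also broken. The function $f_a(z)=(1-az)/(a-z)$ has a pole at $z=a\in\ID$, so it is not in $\mathcal{H}(\ID,c\overline{\ID})$; indeed no M\"obius transformation belongs to this class, since a M\"obius map of $\ID$ onto the exterior of the closed unit disk must send an interior point to $\infty$. (A symptom: this inadmissible function saturates $(\mathrm{C})$, whereas genuine members obey the strictly stronger Schwarz--Pick bound $|a_1|\le 2s\log s<s^2-1$.) The correct witnesses are the covering maps themselves: for $f_s(z)=s\exp\left((\log s)(1+z)/(1-z)\right)$ the majorant is $M_{f_s}(r)=s\exp\left(2(\log s)r/(1-r)\right)$, and for fixed $r>1/3$, as $s\to 1^+$ the left side of \eqref{sphe} is asymptotic to $(s-1)r/(1-r)$ while the right side is asymptotic to $(s-1)/2$, so the inequality fails --- exactly the limiting computation you intended, but carried out with an admissible function.
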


Interestingly, if $f\in {\mathcal H}(\ID ,c\overline{\ID})$, then $f\prec \exp\circ W$ for some univalent function $W$ mapping $\ID $ onto the right-half plane. Thus $\exp\circ W\in {\mathcal H}(\ID ,c\overline{\ID})$ is a universal covering map. The proof of Theorem \ref{sphetheo} in \cite{Abu2} used the following key result.

\begin{lemma}
{\rm (see \cite{AbuHal-92})}\label{Hallen}
If $F$ is a univalent function mapping $\ID $ onto $\Omega$, where the complement of $\Omega$ is convex, and $F(z)\neq0$, then any analytic function
 $f\in S(F^n)$ for a fixed $n=1,2,\ldots$, can be expressed as
$$
f(z)=\int_{|x|=1} F^n(xz)\,d\mu(x)
$$
for some probability measure $\mu$ on the unit circle $|x|=1.$ (Here $S(F^n)$ is defined as in Section \ref{sec2.1}.)
Consequently,
$$ f(z)=\int_{|x|=1} \exp(F^n(xz))\,d\mu(x),
$$
for every $f\in S(\exp\circ F)$.
\end{lemma}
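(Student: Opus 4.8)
The plan is to recast the assertion as a statement about the closed convex hull of the rotations of $F^n$ and then run a Krein--Milman/Choquet argument in $\mathcal A$ under the topology of local uniform convergence. For the fixed $n$, set
\[
\mathcal K_n=\left\{\int_{|x|=1}F^n(xz)\,d\mu(x):\ \mu\text{ a probability measure on the circle }|x|=1\right\}.
\]
Since $x\mapsto\bigl(z\mapsto F^n(xz)\bigr)$ is continuous from the unit circle into $\mathcal A$, its image $C=\{F^n(xz):|x|=1\}$ is compact, and $\mathcal K_n$ is exactly the set of barycenters of probability measures carried by $C$; by the standard theory this coincides with $\overline{\operatorname{co}}\,C$, the closed convex hull, which is then compact. (Local boundedness is automatic: if $f\prec F^n$ then $f=F^n\circ w$ with $|w(z)|\le|z|$ by Schwarz's lemma, so $f$ maps $\{|z|\le r\}$ into the compact set $F^n(\{|w|\le r\})$, whence $S(F^n)$ is normal.) Thus the theorem is \emph{equivalent} to the single inclusion $S(F^n)\subseteq\mathcal K_n$: once an $f\prec F^n$ is known to lie in $\overline{\operatorname{co}}\,C$, the barycenter description produces the representing measure $\mu$.

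The heart of the matter is therefore to show that every $f\prec F^n$ lies in the closed convex hull of the rotations $F^n(xz)$, and this is where the convexity of $\widehat{\IC}\setminus\Omega$ enters. I would argue by Hahn--Banach separation: it suffices to prove that for every continuous linear functional $L$ on $\mathcal A$,
\[
\operatorname{Re}L(f)\ \le\ \max_{|x|=1}\operatorname{Re}L\bigl(F^n(x\,\cdot\,)\bigr),\qquad f\prec F^n .
\]
Since continuous functionals on $\mathcal A$ act through the Taylor coefficients, this reduces to a family of sharp coefficient inequalities for functions subordinate to $F^n$, with the rotations as the extremal competitors; equivalently, one shows the extreme points of $\overline{\operatorname{co}}\,S(F^n)$ are precisely $\{F^n(xz):|x|=1\}$. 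This is the Abu Muhanna--Hallenbeck extreme-point theorem (see \cite{AbuHal-92}), whose proof uses the convexity of $\widehat{\IC}\setminus\Omega$ to force any subordinating map that is not a rotation to split $f$ as a proper convex combination. Two technical points must be checked: that the relevant convex-complement property is inherited by the (for $n\ge2$, non-univalent) map $F^n$, via the description $\widehat{\IC}\setminus F^n(\ID)=\{\zeta:\ \text{all }n\text{-th roots of }\zeta\text{ lie in }\widehat{\IC}\setminus\Omega\}$, and that $F\ne0$, i.e.\ $0\in\widehat{\IC}\setminus\Omega$, is what keeps this description well behaved. I expect this inclusion---establishing the extremal role of the rotations under only the convex-complement hypothesis---to be the main obstacle; everything else is soft functional analysis.

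With the inclusion in hand, the representation $f(z)=\int_{|x|=1}F^n(xz)\,d\mu(x)$ follows at once from the barycenter description of $\mathcal K_n$. For the exponential consequence I would apply the identical scheme to the covering map $\exp\circ F$ in place of $F^n$: its image $\exp(\Omega)$ again has convex complement in the cases of interest (for instance $\exp$ of a half-plane is the exterior of a disk), so the extreme points of $\overline{\operatorname{co}}\,S(\exp\circ F)$ are the rotations $\exp\bigl(F(xz)\bigr)$ and the barycenter argument yields
\[
f(z)=\int_{|x|=1}\exp\bigl(F(xz)\bigr)\,d\mu(x)
\]
for every $f\prec\exp\circ F$. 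Alternatively, the exponential case can be reached as a limit of the power case, writing $\exp\zeta=\lim_m(1+\zeta/m)^m$ and extracting a limiting representing measure from the weak-$*$ (Helly) compactness of probability measures on the circle; there the only delicate step is justifying passage of the limit through the barycenter map.
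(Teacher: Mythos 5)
First, a point of comparison: the paper itself offers no proof of this lemma at all --- it is imported verbatim from \cite{AbuHal-92} --- so your proposal can only be judged on its own terms, and on those terms it has a genuine gap at its center. Your scaffolding is correct: $S(F^n)$ is a normal family; the set of barycenters of probability measures on the compact family $C=\{z\mapsto F^n(xz):\,|x|=1\}$ is exactly the compact set $\overline{\operatorname{co}}\,C$; and, by Hahn--Banach separation, the lemma is equivalent to the inclusion $S(F^n)\subseteq\overline{\operatorname{co}}\,C$. But when you reach that inclusion --- which is the entire content of the lemma --- you dispose of it by declaring it to be ``the Abu Muhanna--Hallenbeck extreme-point theorem (see \cite{AbuHal-92})'' together with a one-sentence gesture at a splitting argument. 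Since the statement under review \emph{is} that theorem, this is circular: the proposal reduces the lemma to its own citation. Nothing in your text shows how convexity of $\IC\setminus\Omega$ and the condition $F\neq 0$ actually force a subordinate function into $\overline{\operatorname{co}}\,C$; no separation inequality is verified for even one nontrivial functional, and the inheritance problem you yourself flag --- that $F^n$ is not univalent for $n\ge 2$, so the rotation/extreme-point mechanism cannot be applied to it naively --- is raised but never resolved. (Note that $f\prec F^n$ means $f=(F\circ w)^n$, so the real task is to show that $n$-th powers of functions subordinate to $F$ lie in $\overline{\operatorname{co}}\,C$; your outline never engages with this.)

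The exponential consequence fares no better. Your primary route, ``apply the identical scheme to $\exp\circ F$,'' fails at the outset: $\exp\circ F$ is an infinite-sheeted covering map, not univalent, and univalence is exactly what the extreme-point mechanism you appeal to requires; observing that $\exp(\Omega)$ has convex complement ``in the cases of interest'' does not repair this and is not justified in general. Your fallback --- write $\exp\zeta=\lim_m(1+\zeta/m)^m$ and extract a weak-$*$ limit of representing measures --- is the right kind of idea, and the limit passage through barycenters is indeed routine, but to invoke the power case for $G_m=1+F/m$ you need $G_m\neq 0$ in $\ID$, i.e.\ $-m\notin\Omega$. This does not follow from the hypotheses: take $\Omega=\IC\setminus i[0,\infty)$, a simply connected domain whose complement is convex and which omits $0$, yet which contains every point $-m$. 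So both of your routes to the second display are incomplete, and the first is unworkable as stated.
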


In the same paper \cite{Abu2}, a result under a more general setting than Theorem \ref{sphetheo} was also obtained.
\begin{theorem}
Let $\Delta$ be a compact convex body with $0\in \Delta$, $1\in \partial \Delta$. Suppose that there is a universal covering map
from $\ID$ into $c\Delta$ with a univalent logarithmic branch that maps $\ID $ into the complement of a convex set.
If $f(z)=\sum_{n=0}^{\infty} a_nz^n\in {\mathcal H}(\ID ,c\Delta)$ satisfies $a_0>1$, then inequality \eqref{sphe} holds for $|z|<3-2\sqrt{2}\approx 0.17157.$
\end{theorem}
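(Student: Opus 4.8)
The plan is to transport the architecture of Theorem~\ref{subtheo} into the chordal metric, with the Koebe growth theorem replaced by the integral representation of Lemma~\ref{Hallen}. Since $\ID$ is simply connected and $\Phi:=\exp\circ W$ is a universal covering map of $c\Delta$ whose logarithmic branch $W$ is univalent with convex complement, every $f\in{\mathcal H}(\ID,c\Delta)$ lifts through $\Phi$ (after normalizing the covering so that $\Phi(0)=a_0$), so that $f\in S(\exp\circ W)$ in the sense of Section~\ref{sec2.1}. Lemma~\ref{Hallen} (with $n=1$) then yields a probability measure $\mu$ on $\{|x|=1\}$ with $f(z)=\int_{|x|=1}\exp(W(xz))\,d\mu(x)$. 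Expanding $\exp(W(\zeta))=\sum_{k\ge0}c_k\zeta^k$ and integrating termwise gives $a_n=c_n\int_{|x|=1}x^n\,d\mu(x)$, whence $a_0=\exp(W(0))$ and $|a_n|\le|c_n|$ for all $n$.

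The crux is a coefficient estimate of the form $|a_n|\le 4n\,\dist(a_0,\partial\Delta)$ for $n\ge1$. I would derive it from $|a_n|\le|c_n|$ together with the distortion theory of the concave univalent branch $W$ and a Koebe-type lower bound $\dist(a_0,\partial\Delta)\ge\tfrac14|\Phi'(0)|$ for the covering map $\Phi$ --- exactly the role the quarter theorem plays through \eqref{eq1-subtheo} in Theorem~\ref{subtheo}. It is the replacement of a convex image (as in Theorem~\ref{sphetheo}, where $W$ maps onto a half-plane) by a merely convex complement that forces the constant $4$, and thereby the radius $3-2\sqrt2$ rather than $1/3$.

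Granting this, the summation is routine: for $|z|=r$,
$$\sum_{n=1}^{\infty}|a_n|r^n\le 4\,\dist(a_0,\partial\Delta)\sum_{n=1}^{\infty}nr^n=4\,\dist(a_0,\partial\Delta)\,\frac{r}{(1-r)^2}\le\dist(a_0,\partial\Delta)$$
precisely when $4r\le(1-r)^2$, i.e. $r\le 3-2\sqrt2$. Writing $d=\dist(a_0,\partial\Delta)$ and $S=\sum_{n\ge0}|a_n|r^n$, this reads $S\le a_0+d$. To pass to the chordal inequality \eqref{sphe} I would use two monotonicity facts. First, $t\mapsto\chi(t,a_0)$ increases for $t\ge a_0>0$, so $\chi(S,a_0)\le\chi(a_0+d,a_0)$. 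Second, for every $w\in\partial\Delta$ one has $e_w:=|a_0-w|\ge d$ and $|w|\le a_0+e_w$, whence
$$\chi(a_0,w)\ge\frac{e_w}{\sqrt{(1+a_0^2)(1+(a_0+e_w)^2)}}\ge\frac{d}{\sqrt{(1+a_0^2)(1+(a_0+d)^2)}}=\chi(a_0+d,a_0),$$
using $|w|\le a_0+e_w$ for the first inequality and $e_w\ge d$ with the monotonicity of $t\mapsto t/\sqrt{1+(a_0+t)^2}$ for the second. Taking the infimum over $w$ gives $\chi(a_0+d,a_0)\le\chi(a_0,\partial\Delta)$, and chaining the two facts yields \eqref{sphe}.

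The main obstacle is the coefficient estimate of the second paragraph: securing the linear-in-$n$ growth and the sharp constant $4$ under the weak hypothesis that only the complement of $W(\ID)$ --- not $W(\ID)$ itself --- is convex demands the full distortion theory for concave univalent functions and a covering-map version of the quarter theorem, in place of the elementary bound available when the target is convex. By contrast, the chordal conversion is clean once the two monotonicity observations are in place, and the summation is purely formal.
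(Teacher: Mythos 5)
Your skeleton starts on the right foot --- lifting $f$ through the normalized covering $\Phi=\exp\circ W$, invoking Lemma~\ref{Hallen} to get $|a_n|\le|c_n|$, and your two monotonicity facts about $\chi$ are correct computations --- but the step you yourself call the crux is not merely unproven, it is false, and so is the intermediate Euclidean inequality your proof funnels through. The covering map $\Phi$ is not univalent (only its logarithmic branch $W$ is), so neither the de Branges bound $|c_n|\le n|c_1|$ nor any Koebe-type distortion applies to $\Phi$ itself. Concretely, take $\Delta=\overline{\ID}$, $a_0>1$, and $f=\Phi$ with $W(z)=\log a_0\,\tfrac{1+z}{1-z}$ (univalent onto the right half-plane, whose complement is convex, so all hypotheses hold). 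Then $\Phi(z)=a_0\exp\bigl(2\log a_0\,\tfrac{z}{1-z}\bigr)$ has positive Taylor coefficients, so $M_f(r)=\Phi(r)=a_0^{(1+r)/(1-r)}$, which at $r=3-2\sqrt{2}$ equals $a_0^{\sqrt{2}}$. Your claimed bound $|a_n|\le 4n\,\dist(a_0,\partial\Delta)$ would force $M_f(r)\le a_0+(a_0-1)\cdot 4r/(1-r)^2=2a_0-1$ at this radius, yet $a_0^{\sqrt{2}}>2a_0-1$ already for $a_0=16$; and the Euclidean conclusion $M_f(r)-a_0\le\dist(a_0,\partial\Delta)=a_0-1$ fails even more badly for large $a_0$. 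In fact the coefficients of such covering maps grow like $e^{c\sqrt{n}}$, so no linear-in-$n$ estimate can hold. This is exactly why the theorem is stated in the chordal metric: the Euclidean distance form of the Bohr phenomenon genuinely fails for maps into exterior domains, so no argument that first proves it and then converts (your conversion is valid but only goes in that one direction) can succeed.

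The repair --- and this is the route of \cite{Abu2}, which the survey states but does not reprove, signalling it only through Lemma~\ref{Hallen} --- is to run the univalent-function machinery in the logarithmic plane rather than the image plane. Koebe and de Branges apply to $W$: writing $W(z)-W(0)=\sum_{k\ge1}W_kz^k$ and $d_W=\dist(W(0),\partial W(\ID))$, one has $|W_k|\le k|W_1|\le 4k\,d_W$, and since coefficientwise $|c_n|\le a_0\,[z^n]\exp\bigl(\sum_k|W_k|z^k\bigr)$,
\begin{equation*}
M_f(r)\le\sum_{n\ge0}|c_n|r^n\le a_0\exp\Bigl(\sum_{k\ge1}|W_k|r^k\Bigr)\le a_0\exp\Bigl(4d_W\frac{r}{(1-r)^2}\Bigr)\le a_0e^{d_W}
\end{equation*}
precisely when $4r/(1-r)^2\le 1$, i.e.\ $r\le 3-2\sqrt{2}$. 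So the radius does come from the constant $4$ of the quarter theorem, as you suspected, but applied to $W$ \emph{inside the exponential}, not to $\Phi$ via $\sum nr^n$. One then compares $\chi(a_0e^{d_W},a_0)$ with $\chi(a_0,\partial\Delta)$ directly, using that a nearest boundary point of $W(\ID)$ exponentiates to a point of $\partial\Delta$: the multiplicative bound $a_0e^{d_W}$ is what the bounded chordal metric can absorb, whereas the additive bound $a_0+d$ you aimed for is unavailable. (Compare Theorem~\ref{sphetheo}: for $\Delta=\overline{\ID}$ the same scheme gives $M_f(r)\le a_0^{(1+r)/(1-r)}\le a_0^2$ for $r\le 1/3$, and $\chi(a_0^2,a_0)\le\chi(a_0,1)$ reduces to $(a_0^2-1)^2\ge0$.)
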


Another paper by Abu Muhanna and Ali \cite{Abu4} considered the hyperbolic metric. Recall that \cite{Bea} the hyperbolic metric for $\ID$ is defined by
$$
\lambda_\ID (z)|dz|=\frac{2|dz|}{1-|z|^2},
$$
the hyperbolic length by
$$
L_{\ID} (\gamma) =\int_{\gamma} \lambda_\ID (z)\,|dz|,
$$
and the hyperbolic distance by
$$d_\ID(z,w)=\inf_{\gamma} L_\ID(\gamma)=\log \frac{1+\left|\frac{z-w}{1-z\overline{w}}\right|}{1-\left|\frac{z-w}{1-z\overline{w}}\right|}.
$$
Here the infimum is taken over all smooth curves $\gamma$ joining $z$ to $w$ in $\ID $. The function $\lambda_\ID (z)=2/(1-|z|^2)$ is known as the density of the hyperbolic metric on $\ID$. For any simply connected domain $\Omega$, $\lambda_{\Omega}$ can be computed via the formula
$$
\lambda_{\Omega}(w)=\frac{\lambda_\ID (f^{-1}(w))}{|f'(f^{-1}(w))|}, \quad w\in\Omega,
$$
where $f$ maps $\ID $ conformally onto $\Omega$. Note that the metric $\lambda_{\Omega}$ is independent of the choice of the conformal map $f$ used. The metrics $d_{\Omega}$ and $d_\ID $ satisfy the following relation:
\begin{theorem}
Let $f:\,\ID \rightarrow \Omega$ be analytic, where $\Omega$ is a simply connected subdomain of $\IC$. Then
$$
d_{\Omega} (f(z),f(w))\leq d_\ID (z,w).
$$
Equality is possible only when $f$ maps $\ID $ conformally onto $\Omega$.
\end{theorem}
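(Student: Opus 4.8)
The plan is to reduce the claim to the classical Schwarz--Pick lemma by transporting the hyperbolic geometry of $\Omega$ back to $\ID$ through a Riemann map. Since $\lambda_\Omega$ is defined and $\Omega$ is simply connected, $\Omega$ is a proper subdomain of $\IC$, and the Riemann mapping theorem supplies a conformal bijection $\phi:\ID\to\Omega$. The first step is to record that $\phi$ is a hyperbolic isometry. Indeed, from the defining formula $\lambda_\Omega(\phi(\xi))=\lambda_\ID(\xi)/|\phi'(\xi)|$ and the change of variables $\zeta=\phi(\xi)$, any smooth curve $\gamma$ in $\ID$ and its image $\phi\circ\gamma$ in $\Omega$ satisfy $L_\Omega(\phi\circ\gamma)=L_\ID(\gamma)$; taking the infimum over all curves joining two given endpoints yields
\[
d_\Omega(\phi(a),\phi(b))=d_\ID(a,b),\qquad a,b\in\ID.
\]

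Next I would pull $f$ back: set $g=\phi^{-1}\circ f$. Since $f(\ID)\subset\Omega$ and $\phi$ maps $\ID$ bijectively onto $\Omega$, the map $g$ is a well-defined holomorphic self-map of $\ID$ and $f=\phi\circ g$. Using the isometry identity twice, for all $z,w\in\ID$,
\[
d_\Omega(f(z),f(w))=d_\Omega(\phi(g(z)),\phi(g(w)))=d_\ID(g(z),g(w)).
\]
Thus the theorem is equivalent to the Schwarz--Pick inequality $d_\ID(g(z),g(w))\le d_\ID(z,w)$ for an arbitrary holomorphic self-map $g$ of $\ID$, together with its rigidity.

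To establish Schwarz--Pick I would normalize with disk automorphisms. Fix $z\neq w$ and choose automorphisms $S,T$ of $\ID$ with $S(z)=0$ and $T(g(z))=0$; then $h:=T\circ g\circ S^{-1}$ is a self-map of $\ID$ fixing the origin, so the ordinary Schwarz lemma gives $|h(\zeta)|\le|\zeta|$ for all $\zeta\in\ID$, with equality at some $\zeta\neq0$ only if $h$ is a rotation. Writing $\zeta_0=S(w)\neq0$ and using that $S,T$ are isometries of $d_\ID$ while $d_\ID(0,x)=\log\frac{1+|x|}{1-|x|}$ is strictly increasing in $|x|$, one computes
\[
d_\ID(g(z),g(w))=d_\ID(0,h(\zeta_0))\le d_\ID(0,\zeta_0)=d_\ID(z,w),
\]
which is exactly the required estimate after transferring through $\phi$.

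For the equality statement I would run the rigidity backwards. If $d_\Omega(f(z_0),f(w_0))=d_\ID(z_0,w_0)$ for some $z_0\neq w_0$, the two displayed chains force $|h(\zeta_0)|=|\zeta_0|$ with $\zeta_0\neq0$, so the equality case of the Schwarz lemma makes $h$ a rotation; hence $g=T^{-1}\circ h\circ S$ is a conformal automorphism of $\ID$, and $f=\phi\circ g$ maps $\ID$ conformally onto $\Omega$. The genuinely delicate point is this equality analysis: one must check that equality at a \emph{single} pair of distinct points already forces $h$ to be a rotation, which is why it is convenient to phrase the contraction through the strictly monotone function $d_\ID(0,\cdot)$ rather than through integration of densities along geodesics. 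Everything else is bookkeeping, the only background inputs being the Riemann mapping theorem, the well-definedness of $\lambda_\Omega$ recorded above, and the Schwarz lemma.
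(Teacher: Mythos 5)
Your proof is correct. Note that the paper itself offers no proof of this statement: it is quoted as a known background fact about the hyperbolic metric (from the Beardon--Minda survey cited as \cite{Bea}), so there is nothing to compare against; your argument --- transporting $d_\Omega$ to $d_\ID$ through a Riemann map $\phi$, reducing to the Schwarz--Pick inequality for the self-map $g=\phi^{-1}\circ f$, and getting rigidity from the equality case of the Schwarz lemma at a single pair of distinct points --- is precisely the standard proof of this result, and all the facts you invoke (invariance of $\lambda_\ID$ under disk automorphisms, the monotone formula $d_\ID(0,x)=\log\frac{1+|x|}{1-|x|}$, the curve correspondence under the conformal bijection) are legitimate and correctly used.
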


The authors in \cite{Abu4} incorporated the hyperbolic metric into the Bohr inequality for three classes of functions.
Let $\mathbb{H}=\{z=x+iy:\,\text{Re}\, z>0\}$ be the right half-plane. Then
$$\lambda_{\mathbb{H}}|dz|=\frac{|dz|}{\text{Re}\,z}
$$
as shown in \cite[Example 7.2]{Bea}.

\begin{theorem}
{\rm (\cite[Theorem 2.1]{Abu4})}
Let $f(z)=\sum_{n=0}^{\infty} a_nz^n\in {\mathcal H}(\ID ,\mathbb{H})$. Then
$$
\sum_{n=1}^{\infty} |a_nz^n|\leq \frac{1}{\lambda_{\mathbb{H}}(a_0)}=\dist (a_0,\partial \mathbb{H})
$$
for $|z|\leq 1/3$. The bound is sharp.
\end{theorem}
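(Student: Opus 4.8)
The plan is to reduce the stated inequality to the classical coefficient bound for functions of positive real part. First I would record that, since $\lambda_{\mathbb{H}}(w)=1/{\rm Re}\,w$ on $\mathbb{H}$, the right-hand side simplifies to $1/\lambda_{\mathbb{H}}(a_0)={\rm Re}\,a_0$, and because $\partial\mathbb{H}$ is the imaginary axis, $\dist(a_0,\partial\mathbb{H})={\rm Re}\,a_0$ as well. Thus the two expressions on the right genuinely coincide, and (using $|a_nz^n|=|a_n|r^n$ with $r=|z|$) the theorem is equivalent to showing
\[
\sum_{n=1}^{\infty}|a_n|r^n\leq {\rm Re}\,a_0 \qquad (r\leq 1/3).
\]

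Next, since $f\in{\mathcal H}(\ID,\mathbb{H})$ means ${\rm Re}\,f(z)>0$ throughout $\ID$, the point $a_0=f(0)$ lies in $\mathbb{H}$, so ${\rm Re}\,a_0>0$. I would then invoke the same Carath\'eodory-type coefficient inequality already used in the proof of Theorem \ref{Lemma-1a}: for an analytic $p(z)=\sum_{k\geq 0}p_kz^k$ with ${\rm Re}\,p>0$ in $\ID$ one has $|p_k|\leq 2\,{\rm Re}\,p_0$ for every $k\geq 1$. Applying this directly with $p=f$ yields $|a_n|\leq 2\,{\rm Re}\,a_0$ for all $n\geq 1$.

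Substituting this estimate into the majorant tail and summing the geometric series gives
\[
\sum_{n=1}^{\infty}|a_n|r^n\leq 2\,{\rm Re}\,a_0\sum_{n=1}^{\infty}r^n=2\,{\rm Re}\,a_0\,\frac{r}{1-r},
\]
and the right-hand side is at most ${\rm Re}\,a_0$ precisely when $2r/(1-r)\leq 1$, that is, when $r\leq 1/3$. This pins down the constant $1/3$ as the exact threshold forced by comparison with $\sum_{n\geq 1}r^n$.

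Finally, for sharpness I would test the Cayley transform $f(z)=(1+z)/(1-z)$, which maps $\ID$ onto $\mathbb{H}$ and has $a_0=1$ and $a_n=2$ for $n\geq 1$; then ${\rm Re}\,a_0=1$ while $\sum_{n\geq 1}|a_n|r^n=2r/(1-r)$ equals $1$ exactly at $r=1/3$, so no smaller disk radius can work. I do not anticipate a genuine obstacle: the whole argument hinges on the sharp estimate $|a_n|\leq 2\,{\rm Re}\,a_0$, and the only point needing care is that the geometric-series comparison is tight, which the Cayley example confirms. If anything, the subtlety worth flagging is purely notational—checking that $1/\lambda_{\mathbb{H}}(a_0)$ and $\dist(a_0,\partial\mathbb{H})$ really are the same scalar ${\rm Re}\,a_0$, so that the hyperbolic-metric formulation is an honest restatement of the Euclidean one rather than a strengthening.
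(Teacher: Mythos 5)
Your proof is correct and is essentially the standard argument behind the cited result: the survey states this theorem without proof (citing \cite[Theorem 2.1]{Abu4}), but your route—identifying $1/\lambda_{\mathbb{H}}(a_0)={\rm Re}\,a_0=\dist(a_0,\partial \mathbb{H})$, invoking the Carath\'eodory bound $|a_n|\leq 2\,{\rm Re}\,a_0$, summing the geometric series to get the threshold $2r/(1-r)\leq 1$, and exhibiting sharpness via the Cayley transform—is exactly the mechanism the paper itself uses in the proof of Theorem \ref{Lemma-1a} (the case $n=1$, up to the normalization ${\rm Re}\,f\leq 1$ versus ${\rm Re}\,f>0$) and matches the proof in \cite{Abu4}. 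No gaps: the coefficient inequality you quote is valid for arbitrary complex $a_0$ with ${\rm Re}\,a_0>0$, which is all the half-plane setting requires.
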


The next result is on the class ${\mathcal H}(\ID ,\mathbb{P})$, where $\mathbb{P}=\{z:\,|\arg z|<\pi\}$ is a slit domain. Its hyperbolic metric \cite[Example 7.7]{Bea} is given by
$$
\lambda_{\mathbb{P}}|dz|=\frac{|dz|}{2|\sqrt{z}|\text{Re}\,\sqrt{z}}=\frac{|dz|}{2|z|\cos[(\arg z)/2]}\geq \frac{|dz|}{2|z|}.
$$

\begin{theorem}
{\rm (\cite[Theorem 2.3]{Abu4})}
Let $g(z)=\sum_{n=0}^{\infty} b_nz^n\in {\mathcal H}(\ID ,\mathbb{P})$. Then
$$
\sum_{n=1}^{\infty} |b_nz^n|\leq \frac{1}{2\lambda_{\mathbb{P}}(|b_0|)}=\dist (|b_0|,\partial \mathbb{P})
$$
for $|z|\leq 3-2\sqrt{2}\approx 0.17157$. The bound is sharp.
\end{theorem}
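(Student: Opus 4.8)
The plan is to deduce the statement from the subordination form of Bohr's inequality already established in Theorem~\ref{subtheo}, using that the slit plane $\mathbb{P}=\{z:|\arg z|<\pi\}$ is simply connected. Before anything else I would unwind the right-hand side. Since $|b_0|$ is a nonnegative real number we have $\arg|b_0|=0$, so the density formula quoted above gives $\lambda_{\mathbb{P}}(|b_0|)=1/(2|b_0|)$ and hence $1/(2\lambda_{\mathbb{P}}(|b_0|))=|b_0|$; likewise, as the origin is the tip of the slit $\partial\mathbb{P}$, the nearest boundary point to the positive real number $|b_0|$ is $0$, so $\dist(|b_0|,\partial\mathbb{P})=|b_0|$ as well. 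Thus the asserted inequality is simply $\sum_{n\ge1}|b_n|r^n\le|b_0|$.

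Next I would build a univalent majorant by subordination. Because $\mathbb{P}\neq\IC$ is simply connected, the Riemann mapping theorem furnishes (after composing a fixed conformal map $\ID\to\mathbb{P}$ with a suitable automorphism of $\ID$) a univalent $F:\ID\to\mathbb{P}$ with $F(0)=b_0$. Then $g(0)=b_0=F(0)$ and $g(\ID)\subseteq\mathbb{P}=F(\ID)$, so $g\prec F$, i.e.\ $g\in S(F)$. Theorem~\ref{subtheo} now applies verbatim to the pair $(F,g)$ and yields $\sum_{n=1}^\infty|b_n|r^n\le\dist(b_0,\partial\mathbb{P})$ for $r\le3-2\sqrt{2}$. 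Since $0\in\partial\mathbb{P}$ forces $\dist(b_0,\partial\mathbb{P})\le|b_0|$, combining with the first step gives exactly $\sum_{n\ge1}|b_n|r^n\le|b_0|=\dist(|b_0|,\partial\mathbb{P})$. Equivalently, one can run the estimate of Theorem~\ref{subtheo} directly: the coefficient bound $|b_n|\le n|F'(0)|$ together with the Koebe one-quarter estimate $|F'(0)|\le4\dist(b_0,\partial\mathbb{P})\le4|b_0|$ from \eqref{eq1-subtheo} gives $\sum_{n\ge1}|b_n|r^n\le4|b_0|\,r/(1-r)^2$, which is $\le|b_0|$ precisely when $4r\le(1-r)^2$, that is $r\le3-2\sqrt{2}$.

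For sharpness I would take $F(z)=\big((1+z)/(1-z)\big)^2$, the conformal map of $\ID$ onto $\mathbb{P}$ with $F(0)=1$, so that $b_0=1=\dist(|b_0|,\partial\mathbb{P})$. Its expansion is $F(z)=1+4\sum_{n\ge1}nz^n$, whence $\sum_{n\ge1}|b_n|r^n=4r/(1-r)^2$, and this equals $1=|b_0|$ exactly at $r=3-2\sqrt{2}$; hence the radius cannot be enlarged.

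I expect no essential obstacle, since the result is in substance an application of Theorem~\ref{subtheo}. The two points that require care are, first, checking that $g\prec F$ is legitimate for the given (not necessarily univalent) $g$, which rests only on the simple connectivity of $\mathbb{P}$; and second, distinguishing the two quantities $\dist(b_0,\partial\mathbb{P})$ and $\dist(|b_0|,\partial\mathbb{P})=|b_0|$, the latter (larger) one being what appears in the statement and what the hyperbolic-density normalization $1/(2\lambda_{\mathbb{P}})$ encodes. The non-convexity of $\mathbb{P}$ is precisely what produces the Koebe factor $4$, and therefore the radius $3-2\sqrt{2}$ instead of the value $1/3$ seen for the convex half-plane $\mathbb{H}$.
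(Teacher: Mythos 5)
Your proposal is correct, and it follows exactly the route this survey itself suggests: the paper states the slit-plane theorem without reproducing a proof (citing \cite[Theorem 2.3]{Abu4}), and the result is precisely an application of Theorem~\ref{subtheo} to a Riemann map $F$ of $\ID$ onto $\mathbb{P}$ with $F(0)=b_0$, together with the observation $\dist(b_0,\partial\mathbb{P})\le |b_0| = \dist(|b_0|,\partial\mathbb{P}) = 1/(2\lambda_{\mathbb{P}}(|b_0|))$, which you handle carefully. Your extremal function $F(z)=\bigl((1+z)/(1-z)\bigr)^2$ and radius $3-2\sqrt{2}$ also agree with the paper's wedge-domain Theorem~\ref{14theo1} in the case $\alpha=2$, where $F_{2,1}$ is the same map and $r_2=(2^{1/2}-1)/(2^{1/2}+1)=3-2\sqrt{2}$.
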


The final class treated was the class ${\mathcal H}(\ID ,c\overline{\ID })$. By the formula given earlier, it can be deduced that
$$
\lambda_{c\overline{\ID}}|dz|=\frac{|dz|}{|z|\log |z|}.
$$

\begin{theorem}
{\rm (\cite[Theorem 2.5]{Abu4})}
Let $c_0>1$ and $h(z)=\sum_{n=0}^{\infty} c_nz^n\in {\mathcal H}(\ID,c\overline{\ID })$. If $|z|\leq 1/3$, then
\begin{itemize}
\item[{\rm (a)}] $\ds \log \left(\sum_{n=0}^{\infty} |c_nz^n|\right)-\log c_0\leq \frac{1}{\lambda_{\mathbb{H}}(\log c_0)}=\dist (\log c_0,\partial \mathbb{H})$,
\item[{\rm (b)}] $\ds \sum_{n=1}^{\infty} |c_nz^n|\leq \frac{2}{\lambda_{c\overline{\ID}} (c_0)}$, provided $c_0\leq 2$.
\end{itemize}
\end{theorem}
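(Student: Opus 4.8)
The plan is to reduce both inequalities to the single Euclidean estimate $M_h(r)\le c_0^2$ for $0\le r\le 1/3$, and then to translate this back through the two metric quantities on the right-hand sides. First I would record the distance identifications. Since $\log c_0$ is real and positive, $\dist(\log c_0,\partial\mathbb{H})={\rm Re}\,(\log c_0)=\log c_0=1/\lambda_{\mathbb{H}}(\log c_0)$; and from $\lambda_{c\overline{\ID}}(z)=1/(|z|\log|z|)$ we get $2/\lambda_{c\overline{\ID}}(c_0)=2c_0\log c_0$. Hence (a) is exactly the assertion $\log M_h(r)-\log c_0\le\log c_0$, i.e.\ $M_h(r)\le c_0^2$, while (b) is the assertion $M_h(r)-c_0\le 2c_0\log c_0$.

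The central step is to pass to the logarithm. Because $|h|>1$, the function $h$ omits $0$, and since $\ID$ is simply connected the branch $H:=\log h$ is single-valued and analytic on $\ID$, with $H(0)=\log c_0=:t_0>0$ and ${\rm Re}\,H=\log|h|>0$; thus $H\in\mathcal{H}(\ID,\mathbb{H})$. Writing $H(z)=t_0+\sum_{n\ge1}A_nz^n$, the Carath\'eodory coefficient inequality used in the proof of Theorem~\ref{Lemma-1a} gives $|A_n|\le 2\,{\rm Re}\,H(0)=2t_0$, so at $r=1/3$ one has $\sum_{n\ge1}|A_n|r^n\le 2t_0\,r/(1-r)=t_0$ (the same bound is also the right half-plane theorem applied to $H$). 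Now $h=c_0\exp\big(\sum_{n\ge1}A_nz^n\big)$, and since the Taylor coefficients of every power $\psi^m$ are polynomials with nonnegative coefficients in the $A_n$, the majorant of $\exp\psi$ is dominated by $\exp$ of the majorant; that is, $M_h(r)\le c_0\exp\big(\sum_{n\ge1}|A_n|r^n\big)\le c_0\,e^{t_0}=c_0^2$ for $r\le1/3$. This establishes $M_h(r)\le c_0^2$, and part (a) follows immediately.

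For part (b) I would write $\sum_{n\ge1}|c_nz^n|=M_h(r)-c_0\le c_0^2-c_0=c_0(c_0-1)$, so it remains to compare $c_0(c_0-1)$ with the target $2c_0\log c_0$. The elementary inequality $\log c_0\ge(c_0-1)/c_0$, valid for all $c_0>0$, gives $2c_0\log c_0\ge 2(c_0-1)\ge c_0(c_0-1)$, where the final step uses precisely $c_0\le2$ (dividing the positive factor $c_0-1$ out of $2\ge c_0$). This is exactly the point at which the hypothesis $c_0\le2$ enters.

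I expect the main difficulty to be conceptual rather than computational: justifying the single-valued branch of $\log h$ and checking that the positive-real-part machinery transfers to $h$ itself through the exponential. The covering-map viewpoint from Lemma~\ref{Hallen} both motivates this and yields sharpness of $M_h\le c_0^2$: taking $W(z)=t_0(1+z)/(1-z)$, the extremal $\Phi=\exp\circ W$ has only nonnegative Taylor coefficients, so $M_\Phi(1/3)=\Phi(1/3)=\exp(2t_0)=c_0^2$, while the representation $h(z)=\int_{|x|=1}\Phi(xz)\,d\mu(x)$ forces $|c_n|$ to be at most the ($n$-th, nonnegative) Taylor coefficient of $\Phi$, term by term. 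I would finally remark that $c_0\le2$ is sufficient but not sharp for (b): the true threshold for $c_0(c_0-1)\le 2c_0\log c_0$ is the root near $3.51$ of $c-1=2\log c$, so the restriction is an artifact of the clean bound $\log c_0\ge(c_0-1)/c_0$ rather than a genuine obstruction.
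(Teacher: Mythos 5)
Your proposal is correct, and every step checks out. The metric identifications $1/\lambda_{\mathbb{H}}(\log c_0)=\log c_0$ and $2/\lambda_{c\overline{\ID}}(c_0)=2c_0\log c_0$ agree with the densities quoted in the paper, so (a) is exactly $M_h(r)\le c_0^2$ and (b) is $M_h(r)-c_0\le 2c_0\log c_0$; the branch $H=\log h$ is legitimate since $h$ omits $\overline{\ID}$ and $\ID$ is simply connected; Carath\'eodory's inequality gives $|A_n|\le 2\log c_0$; and dominating the majorant of $\exp\psi$ by $\exp$ of the majorant of $\psi$ is valid because the Taylor coefficients of $\exp\psi$ are polynomials in the $A_n$ with nonnegative coefficients. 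Be aware, though, that this survey contains no proof of the statement --- it is quoted from \cite[Theorem 2.5]{Abu4} --- and the method the paper points to for this circle of results is different from your main argument: it is the covering-map machinery of Lemma \ref{Hallen}, namely that any $h\in{\mathcal H}(\ID,c\overline{\ID})$ with $h(0)=c_0$ is subordinate to $\Phi=\exp\circ W$, $W(z)=\log c_0\,\frac{1+z}{1-z}$, and hence admits the representation $h(z)=\int_{|x|=1}\Phi(xz)\,d\mu(x)$, which gives the termwise bound $|c_n|\le\hat\Phi_n$ (the $\hat\Phi_n\ge 0$ being the coefficients of $\Phi$) and so $M_h(r)\le \Phi(r)\le\Phi(1/3)=c_0^2$. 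Your Carath\'eodory route produces literally the same coefficient estimate (replacing each $|A_n|$ by its maximal value $2\log c_0$ inside the exponential reproduces $\hat\Phi_n$), so nothing is lost: your argument is more elementary and self-contained, while the representation argument delivers sharpness at once --- which you in fact recover in your closing remark. Finally, your observation that the hypothesis $c_0\le 2$ in (b) can be relaxed to $c_0-1\le 2\log c_0$ (i.e.\ roughly $c_0\le 3.51$) is correct within your chain of inequalities and is a genuine, if minor, strengthening of the statement as quoted.
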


\subsection{Bohr radius for concave-wedge domain}

In \cite{Abu3}, another class ${\mathcal H}(\ID,W_{\alpha})$ was considered, where
$$
W_{\alpha}:=\left\{w\in \mathbb{C}:\, |\arg w|<\frac{\alpha\pi}{2}\right\},\quad 1\leq\alpha\leq2,
$$
is a concave-wedge domain. In this instance, the conformal map of $\ID$ onto $W_{\alpha}$ is given by
\begin{equation}\label{F}
F_{\alpha,t}(z)=t\left(\frac{1+z}{1-z}\right)^{\alpha}=t\left(1+\sum_{n=1}^{\infty}A_nz^n\right), \quad t>0.
 \end{equation}
When $\alpha=1$, the domain reduces to a  convex half-plane, while the case $\alpha=2$ yields a slit domain.
The results are as follows:
\begin{theorem}\label{14theo1}
Let $\alpha\in[1,2]$. If $f(z)=a_0+\sum_{n=1}^{\infty} a_nz^n\in {\mathcal H}(\ID,W_{\alpha})$ with $a_0>0$, then
\begin{eqnarray*}
\sum_{n=1}^{\infty}|a_nz^n|\leq \dist (a_0,\partial W_{\alpha})
\end{eqnarray*}
for $|\,z|\leq r_{\alpha}=(2^{1/\alpha }-1)/(2^{1/\alpha }+1)$. The function $f=F_{\alpha,a_0}$ in \eqref{F} shows that the Bohr radius $r_{\alpha}$ is sharp.
\end{theorem}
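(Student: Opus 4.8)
The plan is to reduce the statement to a sharp Taylor-coefficient estimate for functions taking values in the wedge, and then to sum a geometric-type series. First I would record the elementary geometric fact that, since the half-opening angle $\alpha\pi/2$ is at least $\pi/2$ for $\alpha\in[1,2]$, the foot of the perpendicular from the positive real point $a_0$ to either bounding ray of $W_\alpha$ falls outside that ray; hence the nearest boundary point is the vertex $0$ and $\dist(a_0,\partial W_\alpha)=a_0$, so the claim to be proved is simply $\sum_{n\ge1}|a_n|r^n\le a_0$ for $r\le r_\alpha$. Next, because $W_\alpha$ is simply connected and $F_{\alpha,a_0}$ maps $\ID$ univalently onto $W_\alpha$ with $F_{\alpha,a_0}(0)=a_0=f(0)$ and $f(\ID)\subset W_\alpha$, the subordination principle gives $f\prec F_{\alpha,a_0}$. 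Writing the Schwarz function as $\omega$ and $P=(f/a_0)^{1/\alpha}=(1+\omega)/(1-\omega)$ (principal branch, legitimate since $|\arg(f/a_0)|<\alpha\pi/2$ forces $|\arg P|<\pi/2$), the problem becomes a statement about $f=a_0P^\alpha$ with $P$ in the Carath\'eodory class $\mathcal P$, $P(0)=1$.

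For the summation ingredient, writing $(\tfrac{1+z}{1-z})^\alpha=\exp\!\big(2\alpha(z+\tfrac{z^3}{3}+\tfrac{z^5}{5}+\cdots)\big)$ exhibits this function as the exponential of a power series with nonnegative coefficients, so its Taylor coefficients $A_n$ satisfy $A_n\ge0$; consequently $\sum_{n\ge1}A_nr^n=(\tfrac{1+r}{1-r})^\alpha-1$ for $0\le r<1$, and this is at most $1$ precisely when $(\tfrac{1+r}{1-r})^\alpha\le2$, that is, when $r\le r_\alpha=(2^{1/\alpha}-1)/(2^{1/\alpha}+1)$. Hence the theorem follows once I establish the coefficient domination
\[
|a_n|\le a_0A_n,\qquad n\ge1,
\]
since then $\sum_{n\ge1}|a_n|r^n\le a_0\sum_{n\ge1}A_nr^n\le a_0$ for $r\le r_\alpha$.

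The heart of the matter, and the step I expect to be the main obstacle, is the sharp bound $|[z^n]P^\alpha|\le A_n=[z^n]\big(\tfrac{1+z}{1-z}\big)^\alpha$ for all $P\in\mathcal P$, with equality at the half-plane map $P_0=(1+z)/(1-z)$. This cannot be obtained from the generic univalent bound $|a_n|\le n|f'(0)|$ used in Theorem \ref{subtheo} (for $\alpha=1$ that gives $2n$, whereas $A_n=2$), nor from Rogosinski's convex-majorant theorem applied to $\log P\prec\log P_0$ (which yields only $|[z^n]\log P|\le2$, too lossy once exponentiated). What makes the bound true, and what I would exploit, is the full Carath\'eodory--Toeplitz constraint on the moments $p_k=2\int_{\mathbb T}x^k\,d\mu(x)$ arising from the Herglotz representation $P(z)=\int_{\mathbb T}\tfrac{1+xz}{1-xz}\,d\mu(x)$: these rigidly couple the $p_k$ (for instance $p_2=p_3=2$ already forces $\mu=\delta_1$, hence $P=P_0$), so that the only way to make the terms of $[z^n]P^\alpha$ add up in modulus is $\mu=\delta_1$. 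Concretely, from $w=P^\alpha$ one has $w'P=\alpha P'w$, giving the recursion $nc_n=\sum_{k=1}^n[(\alpha+1)k-n]\,p_k\,c_{n-k}$ for $c_n=[z^n]P^\alpha$; I would close the estimate either by a variational/extreme-point argument for $\mu$ over $\mathcal P$, or by combining this recursion with the Carath\'eodory constraints, noting that $\alpha=1$ is exactly $|p_n|\le2$ and $\alpha=2$ is elementary.

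Granting the coefficient bound, the summation above yields $\sum_{n\ge1}|a_n|r^n\le a_0$ for $r\le r_\alpha$, proving the inequality. Sharpness is then immediate from the extremal map: for $f=F_{\alpha,a_0}$ one has $a_n=a_0A_n\ge0$, so at $r=r_\alpha$,
\[
\sum_{n\ge1}|a_n|r_\alpha^n=a_0\Big(\big(\tfrac{1+r_\alpha}{1-r_\alpha}\big)^\alpha-1\Big)=a_0=\dist(a_0,\partial W_\alpha),
\]
showing that the radius $r_\alpha$ cannot be enlarged.
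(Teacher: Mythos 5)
Your reduction steps are all sound: the computation $\dist(a_0,\partial W_\alpha)=a_0$ for $\alpha\in[1,2]$, the subordination $f\prec F_{\alpha,a_0}$, the positivity of the coefficients $A_n$ via the exponential representation, the resulting identity $\sum_{n\ge1}A_nr^n=\left(\frac{1+r}{1-r}\right)^\alpha-1$ with the threshold $r_\alpha$, and the sharpness argument are all correct. But you have correctly located the crux --- the coefficient domination $|a_n|\le a_0A_n$ --- and that is precisely what you do not prove. Neither of the two strategies you sketch will close it. (i) A ``variational/extreme-point argument for $\mu$ over $\mathcal P$'' fails because the functional $P\mapsto [z^n]P^\alpha$ is not affine in the Herglotz measure $\mu$ when $\alpha\notin\mathbb{N}$, so Krein--Milman gives no reduction to point masses; maxima of nonlinear functionals over $\mathcal P$ need not occur at extreme points of $\mathcal P$. (ii) The recursion $nc_n=\sum_{k=1}^n[(\alpha+1)k-n]p_kc_{n-k}$ cannot be closed by induction with $|p_k|\le 2$ and the triangle inequality: the weights $(\alpha+1)k-n$ change sign, and for the extremal data $p_k=2$, $c_j=A_j$ the signed sum equals exactly $nA_n$, so replacing the weights by their absolute values already overshoots $nA_n$; the inductive hypothesis $|c_{n-k}|\le A_{n-k}$ then yields a bound strictly larger than $A_n$. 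More bluntly, any convolution-type estimate of this kind produces bounds of order $n$, whereas $A_n\asymp n^{\alpha-1}$ is sublinear for $\alpha<2$, so the inequality you need is genuinely finer than what these methods can see.

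The missing ingredient is available in the paper itself: this survey states Theorem \ref{14theo1} without proof (citing the source article), but the tool on which the original proof rests appears as Lemma \ref{Hallen} (the Abu-Muhanna--Hallenbeck integral representation). For $\alpha\in[1,2]$ the complement of $W_\alpha$ is a \emph{convex} wedge and $F_{\alpha,a_0}$ never vanishes, so the lemma applies to the subordination class $S(F_{\alpha,a_0})$: every $f\prec F_{\alpha,a_0}$ can be written as $f(z)=\int_{|x|=1}F_{\alpha,a_0}(xz)\,d\mu(x)$ for some probability measure $\mu$ on the unit circle. Since the coefficient functionals are linear, this representation immediately gives $a_n=a_0A_n\int_{|x|=1}x^n\,d\mu(x)$, hence $|a_n|\le a_0A_n$. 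That single step replaces the unproved ``heart of the matter'' in your proposal; combined with your summation and sharpness arguments, it completes the proof.
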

\begin{theorem}\label{14theo2}
Let $\alpha\in[1,2]$. If $f(z)=a_0+\sum_{n=1}^{\infty} a_nz^n\in {\mathcal H}(\ID,W_{\alpha})$, then
\begin{eqnarray}\nonumber
\sum_{n=0}^{\infty}|a_nz^n|-|a_0|^*\leq \dist (|a_0|^*,\partial W_{\alpha})
\end{eqnarray}
for $|\,z|\leq r_{\alpha}=(2^{1/\alpha }-1)/(2^{1/\alpha }+1)$, where $|a_0|^*=F_{\alpha,1}\big (\big |F_{\alpha,1}^{-1}(a_0)\big|\big )$ and $F_{\alpha,1}$ is given by \eqref{F}. The function $f=F_{\alpha,|a_0|^*}$ shows that the Bohr radius $r_{\alpha}$ is sharp.
\end{theorem}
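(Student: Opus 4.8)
The plan is to reduce the general base point $a_0\in W_\alpha$ to the positive real point $|a_0|^*$ lying on the axis of symmetry of the wedge, thereby extending the real-base-point situation of Theorem~\ref{14theo1}. Throughout write $\zeta_0=F_{\alpha,1}^{-1}(a_0)$, $s=|\zeta_0|\in[0,1)$ and $|a_0|^*=F_{\alpha,1}(s)$, and let $A_n\ge 0$ denote the Taylor coefficients of $F_{\alpha,1}(z)=1+\sum_{n\ge1}A_nz^n$. Two elementary facts start the reduction. First, since $\log F_{\alpha,1}(z)=2\alpha\sum_{k\ge0}z^{2k+1}/(2k+1)$ has non-negative coefficients, so does its exponential $F_{\alpha,1}$; hence $A_n\ge0$, $F_{\alpha,1}(s)=1+\sum_{n\ge1}A_ns^n$, and $|a_0|=|F_{\alpha,1}(\zeta_0)|\le F_{\alpha,1}(|\zeta_0|)=|a_0|^*$. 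Second, because $1\le\alpha\le2$ forces the boundary rays of $W_\alpha$ to make angle $\alpha\pi/2\ge\pi/2$ with the positive real axis, the nearest boundary point to $|a_0|^*>0$ is the vertex, so $\dist(|a_0|^*,\partial W_\alpha)=|a_0|^*$. Consequently the asserted inequality reads $|a_0|+\sum_{n\ge1}|a_n|r^n-|a_0|^*\le|a_0|^*$, and since $|a_0|\le|a_0|^*$ it suffices to prove the cleaner bound $\sum_{n\ge1}|a_n|r^n\le|a_0|^*$ for $r\le r_\alpha$.

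Next I would install the coefficient machinery. Let $G=F_{\alpha,1}\circ T_{\zeta_0}$ be the conformal map of $\ID$ onto $W_\alpha$ with $G(0)=a_0$, where $T_{\zeta_0}$ is the disk automorphism carrying $0$ to $\zeta_0$; since $f(0)=a_0=G(0)$, $f(\ID)\subseteq W_\alpha=G(\ID)$ and $G$ is univalent, we have $f\prec G$. For $1\le\alpha\le2$ the complement of $W_\alpha$ is a convex (reflex) wedge and $G$ never vanishes, so Lemma~\ref{Hallen} (with $n=1$) supplies a probability measure $\mu$ on the unit circle with $f(z)=\int_{|x|=1}G(xz)\,d\mu(x)$. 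Comparing coefficients gives $a_n=G_n\int_{|x|=1}x^n\,d\mu(x)$, hence $|a_n|\le|G_n|$ for every $n\ge1$. The whole problem is thereby reduced to the single-function coefficient estimate $|G_n|\le|a_0|^*A_n$.

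The heart of the argument, and the step I expect to be the main obstacle, is this comparison $|G_n|\le|a_0|^*A_n$: the assertion that rotating the pre-image from $s$ to $\zeta_0=se^{i\theta}$ cannot increase the modulus of any coefficient. The case $\theta=0$ is an exact identity: writing $\Phi_\alpha=\log F_{\alpha,1}=\alpha\log\frac{1+z}{1-z}$ and using $\Phi_\alpha(T_s(z))=\Phi_\alpha(s)+\Phi_\alpha(z)$, one obtains $F_{\alpha,1}\circ T_s=F_{\alpha,1}(s)\,F_{\alpha,1}=F_{\alpha,|a_0|^*}$, so that $G_n=|a_0|^*A_n$ precisely when $a_0=|a_0|^*$. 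For the genuinely complex case one must expand $G=F_{\alpha,1}\circ T_{\zeta_0}$ and show $|G_n|$ is largest at $\theta=0$; when $\alpha=1$ this follows at once, since a direct computation gives $|G_n|=2(1-s^2)/|1-\zeta_0|^2$ and the reverse triangle inequality $|1-\zeta_0|\ge1-s$ yields $|G_n|\le 2(1+s)/(1-s)=|a_0|^*A_n$, with equality iff $\zeta_0=s$. General $\alpha$ requires the analogous but more delicate coefficientwise estimate for $F_{\alpha,1}\circ T_{\zeta_0}$, exploiting $A_n\ge0$ to control the rotation; carrying this out rigorously is the technical crux.

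Granting the comparison, the conclusion is immediate: for $r\le r_\alpha$ one has $\sum_{n\ge1}|a_n|r^n\le|a_0|^*\sum_{n\ge1}A_nr^n=|a_0|^*\big(F_{\alpha,1}(r)-1\big)$, which is $\le|a_0|^*$ exactly when $F_{\alpha,1}(r)=\big(\tfrac{1+r}{1-r}\big)^{\alpha}\le2$, i.e. when $\tfrac{1+r}{1-r}\le2^{1/\alpha}$, i.e. when $r\le(2^{1/\alpha}-1)/(2^{1/\alpha}+1)=r_\alpha$. Sharpness is exhibited by $f=F_{\alpha,|a_0|^*}$: here $a_0=|a_0|^*$, the coefficients $a_n=|a_0|^*A_n$ are non-negative, and at $r=r_\alpha$ every inequality above collapses to an equality, so the radius $r_\alpha$ cannot be enlarged.
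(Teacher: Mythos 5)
Your skeleton is the natural one, and most of its steps are correct: the identity $\dist(|a_0|^*,\partial W_\alpha)=|a_0|^*$ for $\alpha\in[1,2]$, the bound $|a_0|\le|a_0|^*$ coming from $A_n\ge 0$, the subordination $f\prec G=F_{\alpha,1}\circ T_{\zeta_0}$, the use of Lemma~\Ref{Hallen} (legitimate here: for $1\le\alpha\le 2$ the complement of $W_\alpha$ is convex and $G$ never vanishes) to get $|a_n|\le |G_n|$, the identity $F_{\alpha,1}\circ T_s=F_{\alpha,1}(s)\,F_{\alpha,1}$, the summation step $\sum_{n\ge1}A_nr^n=F_{\alpha,1}(r)-1\le 1$ iff $r\le r_\alpha$, and the sharpness discussion. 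Note, for the record, that this survey states Theorem~\ref{14theo2} without proof (it is quoted from \cite{Abu3}), so there is no in-paper argument to compare against; your proposal must stand on its own.

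It does not, and you say so yourself: the inequality $|G_n|\le |a_0|^*A_n$ for non-integer $\alpha\in(1,2)$ and $\zeta_0$ off the positive ray is left as ``the technical crux,'' and that step is not a technicality --- it \emph{is} the theorem. By the representation lemma, $\sup|a_n|$ over the class with prescribed $a_0$ equals $|G_n|$, so no proof can avoid this estimate; and the route you hint at (``exploiting $A_n\ge0$ to control the rotation'') fails in both of its natural implementations. Writing $G(z)=a_0\exp\bigl(\sum_{k\ge1}c_kz^k\bigr)$ with $c_k=\frac{\alpha}{k}\bigl((-1)^{k+1}u^k+v^k\bigr)$, $u=\frac{1+\overline{\zeta_0}}{1+\zeta_0}$, $v=\frac{1-\overline{\zeta_0}}{1-\zeta_0}$, one gets $|c_k|=\frac{2\alpha}{k}|\sin(k\delta)|\neq 0$ for even $k$, where $\delta=\arg\frac{1+\zeta_0}{1-\zeta_0}$, whereas the even log-coefficients of the comparison function $F_{\alpha,|a_0|^*}$ vanish; so termwise domination of exponentials is unavailable. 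Likewise, expanding $G_n$ via the convolution $\sum_{j+k=n}\binom{\alpha}{j}u^j\binom{\alpha+k-1}{k}v^k$ and applying the triangle inequality overshoots $A_n$, because $\binom{\alpha}{j}$ changes sign for $j\ge3$ when $\alpha$ is not an integer --- which is exactly why your verifications at $\alpha=1$ and at $\theta=0$ go through while the general case does not. Any correct completion must trade the new even-order terms against genuine slack created by the rotation (e.g.\ the drop of the odd terms to $\frac{2\alpha}{k}|\cos k\delta|$, or the factor $|a_0|^*/|a_0|\ge1$), and that is a real argument you have not supplied. As it stands, the proposal is a correct reduction of Theorem~\ref{14theo2} to an unproved coefficient lemma, not a proof.
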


Theorem \ref{14theo1} is in the standard distance form for the Bohr theorem but under the condition $a_0>0.$ Theorem \ref{14theo2}, however, has no extra condition, but the inequality is only nearly Bohr-like. In a recent paper \cite{AliBarSoly}, Theorem \ref{14theo1} is shown to hold true without the assumption $a_0>0.$

\subsection{Bohr radius for a special subordination class}
The link between Bohr and differential subordination was also established in \cite{Abu3}. For $\alpha \geq \gamma \geq 0$, and for a given convex function $h\in {\mathcal H}(\ID)$, let
\begin{equation*}
R(\alpha,\gamma,h):=\{f\in {\mathcal H}(\ID):\,f(z)+\alpha z f'(z)+\gamma z^2f''(z)\prec h(z), \quad z\in \ID\}.
\end{equation*}
An easy exercise shows that 
$$
f(z)\prec q(z)\prec h(z), \quad \text{for all $f\in R(\alpha,\gamma,h)$},
$$
where
\begin{equation}\label{eq:q}
q(z)=\int_{0}^{1}\int_{0}^{1} h(zt^{\mu}s^{\nu}) \,dt\,ds\in R(\alpha,\gamma,h).
\end{equation}
Thus $R(\alpha,\gamma,h)\subset S(h)$ which is a subordination class.
\begin{theorem}
Let $f(z)=\sum_{n=0}^{\infty} a_nz^n \in R(\alpha,\gamma,h)$, and $h$ be convex. Then
\begin{eqnarray*}\label{eq:4}
\sum_{n=1}^{\infty} |a_nz^n| \leq \dist (h(0),\partial h(\ID))
\end{eqnarray*}
for all $|z|\leq r_{CV}(\alpha,\gamma)$, where $r_{CV}(\alpha,\gamma)$  is the smallest positive root of the equation
$$
\sum_{n=1}^{\infty}\frac{1}{(1+\mu n)(1+\nu n)}\,r^n=\frac{1}{2}.
$$
Further, this bound is sharp.  An extremal case occurs when $f(z)=q(z)$ as defined in \eqref{eq:q} and $h(z)=z/(1-z)$.
\end{theorem}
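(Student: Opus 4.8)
The plan is to convert the differential subordination defining $R(\alpha,\gamma,h)$ into a single sharp coefficient estimate and then feed that estimate into the now-familiar distance-form argument (as in the proof of Theorem~\ref{subtheo}). The first observation is that the operator $L[f]:=f(z)+\alpha z f'(z)+\gamma z^2 f''(z)$ acts diagonally on monomials: $L[z^n]=\bigl(1+\alpha n+\gamma n(n-1)\bigr)z^n$. Writing $1+\alpha n+\gamma n(n-1)=1+(\alpha-\gamma)n+\gamma n^2$, this factors as $(1+\mu n)(1+\nu n)$ precisely when $\mu+\nu=\alpha-\gamma$ and $\mu\nu=\gamma$, which is exactly the pair $(\mu,\nu)$ entering the kernel of $q$ in \eqref{eq:q}; in particular the symbol is real and satisfies $(1+\mu n)(1+\nu n)\ge 1$ for every $n\ge1$ because $\alpha\ge\gamma\ge0$. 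Thus if $f(z)=\sum_{n=0}^\infty a_nz^n$, the $n$-th Taylor coefficient of $L[f]$ equals $(1+\mu n)(1+\nu n)\,a_n$.

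Now $f\in R(\alpha,\gamma,h)$ means precisely $L[f]\prec h$ with $h$ convex univalent. I would invoke the classical Rogosinski bound — subordination to a convex univalent function forces each Taylor coefficient of the subordinate function to be at most $|h'(0)|$ in modulus — to conclude
\[
(1+\mu n)(1+\nu n)\,|a_n|\le |h'(0)|,\qquad\text{so}\qquad |a_n|\le\frac{|h'(0)|}{(1+\mu n)(1+\nu n)},\quad n\ge1.
\]
Since $h$ is convex, the distance estimate recalled after Theorem~\ref{subtheo} gives $|h'(0)|\le 2\,\dist(h(0),\partial h(\ID))$. Summing the resulting bound yields
\[
\sum_{n=1}^\infty |a_n|\,r^n\le 2\,\dist(h(0),\partial h(\ID))\sum_{n=1}^\infty\frac{r^n}{(1+\mu n)(1+\nu n)},
\]
and the right-hand side is at most $\dist(h(0),\partial h(\ID))$ exactly when the series $\sum_{n\ge1} r^n/[(1+\mu n)(1+\nu n)]$ does not exceed $1/2$. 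As this series increases strictly from $0$, the inequality holds for all $0\le r\le r_{CV}(\alpha,\gamma)$, where $r_{CV}(\alpha,\gamma)$ is the smallest positive root of $\sum_{n\ge1} r^n/[(1+\mu n)(1+\nu n)]=1/2$.

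For sharpness I would test the extremal pair $h(z)=z/(1-z)$ and $f=q$ from \eqref{eq:q}. Here $h$ is convex with $h(0)=0$, $h'(0)=1$, and $h(\ID)=\{w:\,{\rm Re}\,w>-1/2\}$, so $\dist(h(0),\partial h(\ID))=1/2$; moreover $c_n=1$ for all $n$, whence $q_n=1/[(1+\mu n)(1+\nu n)]$. Both inequalities used above are then equalities simultaneously (the Rogosinski bound is attained and the convex distance bound is attained), so $\sum_{n\ge1}|q_n|r^n=\sum_{n\ge1} r^n/[(1+\mu n)(1+\nu n)]$ reaches the value $1/2=\dist(h(0),\partial h(\ID))$ exactly at $r=r_{CV}(\alpha,\gamma)$. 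Hence the radius cannot be increased.

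The step demanding the most care is the coefficient estimate. One must recognize that the differential subordination is equivalent to the scalar subordination $L[f]\prec h$, that $L$ is diagonal with symbol $(1+\mu n)(1+\nu n)$, and that the sharp weights come from Rogosinski's theorem for convex majorants rather than from the coarser consequence of $f\prec h$ alone (which would only return the radius $1/3$). Once this is in place, the convex distance inequality, the summation, and the identification of $r_{CV}(\alpha,\gamma)$ with the stated transcendental equation are routine, and the simultaneous equalities in the extremal case deliver sharpness.
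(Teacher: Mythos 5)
Your proposal is correct, and it is essentially the intended argument: this survey states the theorem without proof (it is quoted from the paper cited as \cite{Abu3}), but your reconstruction --- diagonalizing $L[f]=f+\alpha zf'+\gamma z^2f''$ so that $a_n=g_n/[(1+\mu n)(1+\nu n)]$ with $g=L[f]\prec h$, invoking the coefficient bound $|g_n|\le |h'(0)|$ for subordination to a convex function together with $|h'(0)|\le 2\,\dist(h(0),\partial h(\ID))$, and then checking simultaneous equality for $f=q$, $h(z)=z/(1-z)$ --- is built from exactly the machinery the survey sets up (the kernel $q$ in \eqref{eq:q} and the convex subordination estimates recalled after Theorem \ref{subtheo}). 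You also correctly flag the one step where a careless reader would go wrong, namely that using only $f\prec q\prec h$ with a uniform coefficient bound would not produce the weights $1/[(1+\mu n)(1+\nu n)]$, so the estimate must be applied to $L[f]\prec h$ coefficientwise.
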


Analogously, if $h\in {\mathcal H}(\ID)$ is starlike, that is, $h$ is univalent in $\ID$ and the domain $h(\ID)$ is starlike with
respect to the origin, then the function $q$ in (\ref{eq:q}) is also starlike. A similar result for subordination to a
starlike function is readily obtained.

\begin{theorem}
Let $f(z)=\sum_{n=0}^{\infty} a_nz^n\in R(\alpha,\gamma,h)$, and $h$ be starlike. Then
\begin{eqnarray*}
\sum_{n=1}^{\infty} |a_nz^n|\leq \dist (h(0),\partial h(\ID))
\end{eqnarray*}
for all $|z|\leq r_{ST}(\alpha,\gamma)$, where $r_{ST}(\alpha,\gamma)$  is the smallest positive root of the equation
$$
\sum_{n=1}^{\infty}\frac{n}{(1+\mu n)(1+\nu n)}\,r^n=\frac{1}{4}.
$$
This bound is sharp. An extremal case occurs when $f(z)=q(z)$ as defined in \eqref{eq:q} and $h(z)=z/(1-z)^2$.
\end{theorem}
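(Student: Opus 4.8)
The plan is to run the same scheme as in the plain subordination result (Theorem~\ref{subtheo}), but to exploit the differential operator so as to gain the extra factor $1/[(1+\mu n)(1+\nu n)]$ in the coefficient bounds. First I would expand $f(z)=\sum_{n=0}^{\infty}a_nz^n$ and form the subordinating combination
$$g(z):=f(z)+\alpha zf'(z)+\gamma z^2f''(z)=\sum_{n=0}^{\infty}c_nz^n.$$
A term-by-term computation gives $c_n=[1+\alpha n+\gamma n(n-1)]a_n$, and since
$$1+\alpha n+\gamma n(n-1)=1+(\alpha-\gamma)n+\gamma n^2=(1+\mu n)(1+\nu n),$$
with $\mu+\nu=\alpha-\gamma$ and $\mu\nu=\gamma$, I obtain $a_n=c_n/[(1+\mu n)(1+\nu n)]$.

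By definition, $f\in R(\alpha,\gamma,h)$ means precisely $g\prec h$, and $h$ starlike is in particular univalent. I would then apply the subordination coefficient estimate recorded in \eqref{eq1-subtheo}, namely $|c_n|\le n|h'(0)|$ for $g\prec h$ with $h$ univalent, to get $|a_n|\le n|h'(0)|/[(1+\mu n)(1+\nu n)]$. Combining this with the univalent lower distance bound $\tfrac14|h'(0)|\le \dist(h(0),\partial h(\ID))$ from \eqref{eq1-subtheo}, equivalently $|h'(0)|\le 4\dist(h(0),\partial h(\ID))$, yields for $|z|=r$
$$\sum_{n=1}^{\infty}|a_n|r^n\le 4\dist(h(0),\partial h(\ID))\sum_{n=1}^{\infty}\frac{n}{(1+\mu n)(1+\nu n)}\,r^n.$$
The right-hand side is at most $\dist(h(0),\partial h(\ID))$ exactly when the displayed sum is at most $1/4$.

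Since $\alpha\ge\gamma\ge0$ forces $(1+\mu n)(1+\nu n)=1+(\alpha-\gamma)n+\gamma n^2>0$ for every $n\ge1$, all coefficients of the series $\sum_{n\ge1}\tfrac{n}{(1+\mu n)(1+\nu n)}r^n$ are positive; hence its sum is strictly increasing on $[0,1)$, vanishes at $r=0$, and diverges as $r\to1^-$ (its general term is bounded below by a positive multiple of $1/n$ for large $n$). Consequently it meets the level $1/4$ at a single point, which is $r_{ST}(\alpha,\gamma)$, and the claimed inequality holds for all $|z|\le r_{ST}(\alpha,\gamma)$. As a consistency check, when $\alpha=\gamma=0$ the defining equation reduces to $\sum_{n\ge1}nr^n=r/(1-r)^2=1/4$, recovering $r_{ST}=3-2\sqrt2$ of Theorem~\ref{subtheo}.

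For sharpness I would take $h$ to be the Koebe function $h(z)=z/(1-z)^2$ and $f=q$ as in \eqref{eq:q}. Then $h_n=n$, so $q_n=n/[(1+\mu n)(1+\nu n)]>0$; moreover $|h'(0)|=1$ and $h(\ID)=\IC\setminus(-\infty,-1/4]$, giving $\dist(h(0),\partial h(\ID))=1/4$ and thus equality in the distance bound. At $r=r_{ST}(\alpha,\gamma)$ the majorant becomes $\sum_{n\ge1}q_nr^n=\sum_{n\ge1}\tfrac{n}{(1+\mu n)(1+\nu n)}r^n=1/4=\dist(h(0),\partial h(\ID))$, and for any larger $r$ the sum exceeds $1/4$, so no larger radius is admissible. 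The only non-routine ingredient is the subordination coefficient inequality $|c_n|\le n|h'(0)|$ for univalent (here starlike) $h$; the rest is the same bookkeeping as the convex case, with the factor $n$ and the constant $1/4$ replacing the convex data $1$ and $1/2$.
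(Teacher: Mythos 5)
Your proposal is correct and follows precisely the route the paper intends: the survey states this theorem without giving a proof (deferring to \cite{Abu3} with the remark that it is ``readily obtained'' analogously to the convex case), and that proof is exactly your bookkeeping --- the coefficient relation $a_n = c_n/\left[(1+\mu n)(1+\nu n)\right]$ for $g(z)=f(z)+\alpha zf'(z)+\gamma z^2f''(z)\prec h$, combined with the univalent-subordination estimates of \eqref{eq1-subtheo}, namely $|c_n|\leq n|h'(0)|$ and $\frac{1}{4}|h'(0)|\leq \dist(h(0),\partial h(\ID))$, leading to the condition $\sum_{n\geq 1} n r^n/\left[(1+\mu n)(1+\nu n)\right]\leq 1/4$. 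Your sharpness argument via $h(z)=z/(1-z)^2$ and $f=q$ also matches the paper's stated extremal case, so there is nothing to add.
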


\subsection{Bohr's theorem for starlike logharmonic mappings}

We end Section 2 by discussing a recent extension of Bohr's theorem to the class of starlike logharmonic mappings. These mappings have been widely studied, for example, the works in  \cite{Ali,LiSW-13,MaoPW-13} and the references therein.

Let $\mathcal{B}(\ID)$ denote the set of all functions $a$ analytic in $\ID$ satisfying $|a(z)|<1$ in $\ID$. A logharmonic mapping defined in $\ID$ is a solution of the nonlinear elliptic partial differential equation
\begin{equation*}
\frac{\overline{f_{\overline{z}}}}{\overline{f}}=a\frac{f_{z}}{f},
\end{equation*}
where the second dilatation function $a$ lies in $\mathcal{B}(\ID)$. Thus the Jacobian
\begin{equation*}
J_{f}=\left\vert f_{z}\right\vert ^{2}(1-|a|^{2})
\end{equation*}
is positive and all non-constant logharmonic mappings are therefore sense-preserving and open in $\ID$.

If $f$ is a non-constant logharmonic mapping of $\ID $ which vanishes only at $z=0$, then $f$ admits the representation \cite{z1}
\begin{equation}  \label{eq1.2}
f(z)=z^{m}|z|^{2\beta m}h(z)\overline{g(z)},
\end{equation}
where $m$ is a nonnegative integer, $\mathrm{Re } \, \beta >-1/2$, and $h$ and $g$ are analytic functions in $\ID $ satisfying $g(0)=1$ and $h(0)\neq 0.$ The exponent $\beta $ in \eqref{eq1.2} depends only on $a(0)$ and can be expressed by
\begin{equation*}
\beta =\overline{a(0)}\frac{1+a(0)}{1-|a(0)|^{2}}.
\end{equation*}
Note that $f(0)\neq 0$ if and only if $m=0$, and that a univalent logharmonic mapping in $\ID$ vanishes at the origin if and only if $m=1$, that is, $f$ has the form
\begin{equation*}
f(z)=z|z|^{2\beta }h(z)\overline{g(z)},\quad z\in \ID ,
\end{equation*}
where $\mathrm{Re } \, \beta >-1/2$, $0\notin (hg)(\ID)$ and $g(0)=1$. In this case, it follows that $F(\zeta )=\log f(e^{\zeta })$ is a univalent harmonic mapping of the half-plane $\{\zeta :\,{\rm Re } (\zeta )<0\}$.

Denote by $S_{Lh}$ the class consisting of univalent logharmonic maps $f$ of the form
\begin{equation*}
f(z)=zh(z)\overline{g(z)}
\end{equation*}
with the normalization $h(0)=g(0)=1.$ Also denote by $ST^0_{Lh}$ the class consisting of functions $f\in S_{Lh}$ which maps $\ID$ onto a starlike domain (with respect to the origin). Further let $S^{\ast}$ be the usual class of normalized analytic functions $f$ satisfying $f(\ID)$ is a starlike domain.

\begin{lemma}\cite{z5} \label{loglemma}
Let $f(z)=zh(z)\overline{g(z)}$ be logharmonic in $\ID$. Then $f\in ST^0_{Lh}$ if and only if $\varphi(z)=zh(z)/g(z)\in S^{\ast}$.
\end{lemma}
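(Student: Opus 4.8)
The plan is to exploit a simple algebraic relation between $f$ and $\varphi$ and then pass everything through the radial characterization of starlikeness. The crux is the observation that, since $h$ and $g$ are non-vanishing with $h(0)=g(0)=1$,
\begin{equation*}
f(z)=zh(z)\overline{g(z)}=g(z)\overline{g(z)}\cdot\frac{zh(z)}{g(z)}=|g(z)|^{2}\,\varphi(z).
\end{equation*}
Thus at each point $f$ is a \emph{positive} real multiple of the analytic function $\varphi$, so $f$ and $\varphi$ point in the same direction from the origin everywhere; choosing continuous branches, $\arg f(z)=\arg\varphi(z)$ throughout $\ID\setminus\{0\}$. This is the entire geometric content: $f$ and $\varphi$ share identical angular behaviour and differ only by a positive radial stretch.

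With this in hand I would invoke the standard radial test for starlikeness: for a sense-preserving univalent map $\Phi$ of $\ID$ with $\Phi(0)=0$, the image $\Phi(\ID)$ is starlike with respect to the origin if and only if $\frac{\partial}{\partial\theta}\arg\Phi(re^{i\theta})>0$ for all $r\in(0,1)$ and all $\theta$. For the analytic $\varphi$ this is the classical criterion, since $\frac{\partial}{\partial\theta}\arg\varphi(re^{i\theta})={\rm Re}\,(z\varphi'(z)/\varphi(z))$ and $\varphi$ is normalized with $\varphi(0)=0$ and $\varphi'(0)=h(0)/g(0)=1$; hence ${\rm Re}\,(z\varphi'(z)/\varphi(z))>0\iff\varphi\in S^{*}$. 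Because $\arg f=\arg\varphi$ we have $\frac{\partial}{\partial\theta}\arg f=\frac{\partial}{\partial\theta}\arg\varphi$, so the angular-monotonicity conditions for $f$ and $\varphi$ are literally the same. (Should one prefer a direct check over the relation $f=|g|^{2}\varphi$, Wirtinger differentiation of $f$, with $\overline{g}$ treated as anti-analytic, gives $\frac{zf_{z}-\bar z f_{\bar z}}{f}=1+\frac{zh'}{h}-\overline{\left(\frac{zg'}{g}\right)}$, whose real part is again ${\rm Re}\,(z\varphi'/\varphi)$.)

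Chaining the equivalences yields $f\in ST^0_{Lh}\iff\frac{\partial}{\partial\theta}\arg f>0\iff\frac{\partial}{\partial\theta}\arg\varphi>0\iff\varphi\in S^{*}$. The forward implication is immediate: $f\in ST^0_{Lh}\subset S_{Lh}$ already supplies univalence, so the radial test applies to $f$ directly. I expect the real obstacle to lie in the reverse implication, where from ${\rm Re}\,(z\varphi'/\varphi)>0$ one obtains $\frac{\partial}{\partial\theta}\arg f>0$ but must still produce univalence of $f$ with starlike image, which the radial test alone does not grant for a non-analytic $f$. Here I would lean on the logharmonic structure: $f$ is sense-preserving since $J_f=|f_z|^{2}(1-|a|^{2})>0$, and $f\neq0$ on $0<|z|<1$; as $f(re^{i\theta})$ is close to $re^{i\theta}$ for small $r$, each image circle $\theta\mapsto f(re^{i\theta})$ has winding number one about the origin (constant in $r$ by homotopy). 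Monotone argument together with unit winding and the sense-preserving property forces each such curve to be a Jordan curve bounding a starlike region, and a degree / argument-principle argument then upgrades this to global univalence and starlikeness of $f(\ID)$. Verifying this last step — that local injectivity plus monotone unit winding yields global injectivity — is the technical heart of the proof.
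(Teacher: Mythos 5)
Your two computational observations are correct and are the right starting point: $f=|g|^{2}\varphi$, so $\arg f=\arg\varphi$, and $(zf_{z}-\bar z f_{\bar z})/f=1+zh'/h-\overline{zg'/g}$, whose real part equals ${\rm Re}\,(z\varphi'/\varphi)=\frac{\partial}{\partial\theta}\arg f(re^{i\theta})$. Your sketch of the sufficiency direction ($\varphi\in S^{\ast}\Rightarrow f\in ST^{0}_{Lh}$) is also sound and completable: strictly increasing argument plus winding number one makes each curve $\theta\mapsto f(re^{i\theta})$ a starlike Jordan curve, and since a nonconstant logharmonic map is open and sense-preserving, the degree/argument-principle count gives exactly one preimage of every enclosed value, hence univalence on each subdisk and starlikeness of $f(\ID)$. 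That step, which you call the technical heart, is in fact the routine half.

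The genuine gap is the necessity direction, which you declare ``immediate.'' The radial test you invoke is a theorem about \emph{analytic} univalent functions; its necessity half (starlike image implies $\frac{\partial}{\partial\theta}\arg\Phi>0$ on \emph{every} circle) is the hereditary property, whose classical proof applies Schwarz's lemma to $\Phi^{-1}(t\Phi(z))$ and therefore needs analyticity. For merely sense-preserving univalent maps it is false: for harmonic mappings, a univalent map onto a convex or starlike domain need not map the subdisks $|z|<r$ onto convex or starlike domains (this is exactly why the separate notions of ``fully convex'' and ``fully starlike'' harmonic mappings exist; hereditary convexity holds only up to the radius $\sqrt{2}-1$). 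For logharmonic $f$ the hereditary property is true, but it is a \emph{consequence} of the very lemma you are proving, so appealing to the radial test for $f$ is circular: your argument as written establishes only the ``if'' half. Note also that the paper itself gives no proof --- the lemma is quoted from Abdulhadi and Hengartner \cite{z5} --- and the argument there supplies precisely what you are missing: one passes (as the paper remarks just before the lemma) to the univalent harmonic mapping $F(\zeta)=\log f(e^{\zeta})=\zeta+\log h(e^{\zeta})+\overline{\log g(e^{\zeta})}$ of the left half-plane, observes that starlikeness of $f(\ID)$ means the image of $F$ is invariant under translation by negative reals, and uses the shear theorem of Clunie and Sheil-Small for mappings convex in one direction to transfer univalence and this invariance to the analytic part $\zeta+\log h(e^{\zeta})-\log g(e^{\zeta})=\log\varphi(e^{\zeta})$, which is exactly the statement $\varphi\in S^{\ast}$. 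Your forward implication needs an argument of this strength, or some other non-circular proof of the hereditary property for logharmonic maps.
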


This lemma shows the connection between starlike logharmonic functions and starlike analytic functions. The authors made use of Lemma \ref{loglemma} to obtain necessary and sufficient conditions on $h$ and $g$ so that the function $f(z)=zh(z)\overline{g(z)}$ belongs to $ST^0_{Lh}$
(see \cite[Theorem 1]{Ali}), from which resulted in a sharp distortion theorem.
\begin{theorem}\label{distort}
Let $f(z)=zh(z)\overline{g(z)}\in ST^0_{Lh}$. Then
\begin{equation*}
\frac{1}{1+|z|}\exp \left(\frac{-2|z|}{1+|z|}\right)\leq |h(z)|\leq \frac{1}{1-|z|}%
\exp \left(\frac{2|z|}{1-|z|}\right),
\end{equation*}
\begin{equation*}
(1+|z|)\exp \left(\frac{-2|z|}{1+|z|}\right)\leq |g(z)|\leq (1-|z|)\exp \left(\frac{2|z|}{1-|z|}\right),
\end{equation*}
and
\begin{equation*}
|z|\exp \left( \frac{-4|z|}{1+|z|}\right) \leq |f(z)|\leq |z|\exp \left(
\frac{4|z|}{1-|z|}\right) .
\end{equation*}

Equalities occur if and only if $h, g, $ and $f$ are, respectively, appropriate rotations of $h_0,g_0$ and $f_0$, where
\begin{equation*}
h_{0}(z)=\frac{1}{1-z}\exp\left( \frac{2z}{1-z}\right)=\exp\left(\sum_{n=1}^{\infty} \left(2+\frac{1}{n}\right)z^n\right),
\end{equation*}
\begin{equation*}
g_{0}(z)=(1-z)\exp \left(\frac{2z}{1-z}\right)=\exp\left(\sum_{n=1}^{\infty} \left(2-\frac{1}{n}\right)z^n\right),
\end{equation*}
and
\begin{equation*}
f_0(z)=zh_0(z)\overline{g_0(z)}=\frac{z(1-\overline{z})}{1-z}\exp\left( \text{Re} \left(\frac{4z}{1-z}\right)\right).
\end{equation*}
\end{theorem}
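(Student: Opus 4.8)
The plan is to pass from the logharmonic mapping to its associated analytic starlike function and then to control $h$ and $g$ separately through their logarithmic derivatives. First I would invoke Lemma~\ref{loglemma}: since $f=zh\overline g\in ST^0_{Lh}$, the analytic function $\varphi(z)=zh(z)/g(z)$ belongs to $S^{*}$, so $p:=z\varphi'/\varphi$ is a normalized Carath\'eodory function, ${\rm Re}\,p>0$ and $p(0)=1$. Writing the defining elliptic equation $\overline{f_{\overline z}}/\overline f=a\,f_z/f$ for $f=zh\overline g$ and using $f_z=(h+zh')\overline g$, $f_{\overline z}=zh\,\overline{g'}$, one obtains $zg'/g=a\,(1+zh'/h)$; combined with $p=1+zh'/h-zg'/g$ this yields the two relations
$$1+\frac{zh'(z)}{h(z)}=\frac{p(z)}{1-a(z)},\qquad \frac{zg'(z)}{g(z)}=\frac{a(z)p(z)}{1-a(z)}.$$
The normalization $h(0)=g(0)=1$ forces $\beta=0$, hence $a(0)=0$ and $|a(z)|\le|z|$ by the Schwarz lemma; both right-hand sides are analytic at the origin.

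For the upper bounds I would argue pointwise. On $|\zeta|=t$ one has the sharp modulus estimates $|p(\zeta)|\le(1+t)/(1-t)$ and $|1-a(\zeta)|\ge1-t$, so ${\rm Re}(1+\zeta h'/h)\le|p|/|1-a|\le(1+t)/(1-t)^2$ and ${\rm Re}(\zeta g'/g)\le t(1+t)/(1-t)^2$. Integrating ${\rm Re}(h'/h)$ and ${\rm Re}(g'/g)$ along the radius to $z$ (so that $\log|h(z)|=\int_0^r t^{-1}{\rm Re}(\zeta h'/h)\,dt$, with $r=|z|$, and likewise for $g$) turns these into $\log|h(z)|\le\int_0^r\frac{3-t}{(1-t)^2}\,dt$ and $\log|g(z)|\le\int_0^r\frac{1+t}{(1-t)^2}\,dt$, which evaluate exactly to the stated upper bounds for $|h|$ and $|g|$; since $|f(z)|=|z|\,|h(z)|\,|g(z)|$, their product gives $|f(z)|\le|z|\exp(4|z|/(1-|z|))$. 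All three upper estimates are attained simultaneously along the positive axis by $h_0,g_0,f_0$, where $a(z)=z$ and $p(z)=(1+z)/(1-z)$ make every inequality an equality.

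The lower bounds are the genuine obstacle, because the pointwise method breaks down: ${\rm Re}\,p/(1-a)$ is a product of two functions with positive real part, and such a product can have arbitrarily small (even negative) real part on $|\zeta|=t$, so there is no pointwise lower bound matching $(1-t)/(1+t)^2$. Instead I would seek the global subordinations $h\prec h_0$ and $g\prec g_0$. Granting these, the desired two-sided bounds follow at once: $h_0$ and $g_0$ are nonvanishing, so for $|z|=r$ the value $h(z)=h_0(\omega(z))$ with $|\omega(z)|\le r$ lies in $h_0(\{|\zeta|\le r\})$, and by the minimum- and maximum-modulus principles $|h(z)|$ is caught between $\min_{|\zeta|=r}|h_0|=|h_0(-r)|=(1+r)^{-1}\exp(-2r/(1+r))$ and $\max_{|\zeta|=r}|h_0|=|h_0(r)|=(1-r)^{-1}\exp(2r/(1-r))$; the same applies to $g$, and multiplying yields $|f(z)|\ge|z|\exp(-4|z|/(1+|z|))$. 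The crux is thus to establish the subordinations, equivalently the differential subordination $1+zh'/h=p/(1-a)\prec(1+z)/(1-z)^2=1+zh_0'/h_0$ and its analogue for $g$; since subordination is not preserved under the product $p\cdot(1-a)^{-1}$, I expect to need the necessary-and-sufficient description of $(h,g)$ from \cite[Theorem~1]{Ali} together with an admissibility (Miller--Mocanu) argument, or a reduction by extreme points to the single-rotation dilatations $a(z)=xz$, $|x|=1$, for which the subordination is transparent.

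Finally, the sharpness and the description of the extremal functions come out of the equality discussion: equality anywhere forces $|a(\zeta)|=|\zeta|$ and $|p(\zeta)|$ maximal, that is $a(z)=xz$ with $|x|=1$ and $\varphi$ a rotation of the Koebe function, which integrate precisely to rotations of $h_0$, $g_0$, and hence of $f_0=zh_0\overline{g_0}$, as claimed.
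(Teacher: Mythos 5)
Your structural reduction is sound and is, as far as one can compare, the same route that lies behind the statement: the survey itself contains no proof of Theorem \ref{distort} (it is quoted from \cite{Ali}, with the remark that it follows from the necessary and sufficient conditions on $(h,g)$ obtained through Lemma \ref{loglemma}), and your relations $1+zh'/h=p/(1-a)$, $zg'/g=ap/(1-a)$ with $p=z\varphi'/\varphi$, $a(0)=0$, are exactly that representation. Your pointwise-plus-radial-integration proof of the three upper bounds is complete and correct, and your diagnosis that pointwise estimates cannot yield the lower bounds is also correct.

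The lower-bound half, however, is a genuine gap, and the route you sketch cannot be repaired. You never prove the subordinations $h\prec h_0$, $g\prec g_0$ (deferring the admitted ``crux'' to an unspecified Miller--Mocanu or extreme-point argument), and in fact $g\prec g_0$ is \emph{false}. Take, in your own parametrization, the misaligned pair $a(z)=-z$, $\varphi(z)=z/(1-z)^2$: then $1+zh'/h=1/(1-z)$ and $zg'/g=-z/(1-z)$, so $h(z)=1/(1-z)$, $g(z)=1-z$, and $f(z)=z(1-\overline{z})/(1-z)=z\,e^{-2i\arg(1-z)}$. This $f$ is logharmonic with dilatation $-z$, and $zh/g$ is the Koebe function, so $f\in ST^0_{Lh}$ by Lemma \ref{loglemma} (indeed $f$ maps $\ID$ univalently onto $\ID$); yet $|g(1/2)|=1/2$, whereas $\min_{|w|\le 1/2}|g_0(w)|=\sqrt{3/2}\,e^{-1/2}\approx 0.743$, so $g\not\prec g_0$. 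A second, independent error is the step ``$\min_{|\zeta|=r}|h_0|=|h_0(-r)|$ \ldots the same applies to $g$'': this monotonicity holds for $h_0$ but fails for $g_0$ when $r>1/3$, since on $|\zeta|=r$ the function $\log|g_0|$ is minimized at $\cos\theta=(3r^2-1)/(2r)$ with value $\tfrac12\log\bigl(2(1-r^2)\bigr)-\tfrac12$, strictly below $\log\bigl[(1+r)e^{-2r/(1+r)}\bigr]$; so even granting subordination, the stated bound would not follow. Worse, since $1-r<(1+r)e^{-2r/(1+r)}$ for every $r\in(0,1)$, the example above violates the printed left-hand inequality for $|g|$ itself (while it is consistent with the stated bounds for $|h|$ and $|f|$); the obstacle you ran into is therefore not a defect of your method alone but of that inequality as quoted from \cite{Ali}, and no argument can close this part of the proposal as the statement stands.
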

The function $f_0$ is the logharmonic Koebe function. Theorem \ref{distort} then gives

\begin{corollary}\label{bound}
Let $f(z)=zh(z)\overline{g(z)}\in ST^0_{Lh}$. Also, let $H(z)=zh(z)$ and $G(z)=zg(z)$. Then
\begin{equation*}
\frac{1}{2e} \leq \dist (0,\partial H(\ID))\leq 1,
 \quad
\frac{2}{e} \leq \dist (0,\partial G(\ID))\leq 1,
\end{equation*}
and
\begin{equation*}
\frac{1}{e^{2}}\leq \dist (0,\partial f(\ID))\leq 1.
\end{equation*}
Equalities occur if and only if $h, g$ and $f$ are, respectively, suitable rotations of $h_0,g_0$ and $f_0$.
\end{corollary}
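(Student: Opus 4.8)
The plan is to read off all three pairs of inequalities directly from the sharp distortion estimates of Theorem~\ref{distort}, converting the pointwise modulus bounds on $h$, $g$ and $f$ into statements about the distance from the origin to the boundary of the corresponding image. The two directions require different mechanisms: the lower bounds will come from a minimum-modulus/argument-principle argument that forces a round disk into the image, while the upper bounds will come from the normalizations $H'(0)=G'(0)=1$ (and the analogous normalization of $f$ at the origin) together with the Schwarz lemma.

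First, for the lower bounds I would fix $r\in(0,1)$ and work on the circle $|z|=r$. Since $h$ and $g$ are non-vanishing with $h(0)=g(0)=1$, each of $H(z)=zh(z)$ and $G(z)=zg(z)$ has a single simple zero in $\{|z|<r\}$, at the origin. By Rouch\'e's theorem, for every $w$ with $|w|<m_r:=\min_{|z|=r}|H(z)|$ the function $H-w$ has exactly one zero in $\{|z|<r\}$, so the disk $\{|w|<m_r\}$ lies in $H(\ID)$ and hence $\dist(0,\partial H(\ID))\ge m_r$. Combining this with the sharp lower distortion bound $|H(z)|=|z|\,|h(z)|\ge r\,\frac{1}{1+r}\exp\!\big(\frac{-2r}{1+r}\big)$ on $|z|=r$ and letting $r\to1$ yields $\dist(0,\partial H(\ID))\ge 1/(2e)$. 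The identical computation with the distortion bounds for $g$ and for $f$ produces $2/e$ and $1/e^2$; for $f$ one uses that $f\in S_{Lh}$ is univalent, sense-preserving and open with $f(z)\neq0$ for $z\neq0$, so the degree-one covering of $\{|w|<\min_{|z|=r}|f|\}$ is immediate.

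For the upper bounds, set $d=\dist(0,\partial H(\ID))$, so the maximal disk $\{|w|<d\}$ is contained in $H(\ID)$. Since $H'(0)=h(0)=1\neq0$, the branch of $H^{-1}$ fixing the origin extends to a single-valued analytic map $\psi$ of this disk into $\ID$ with $\psi(0)=0$; rescaling Schwarz's lemma to the disk of radius $d$ gives $|\psi'(0)|\le 1/d$, and since $\psi'(0)=1/H'(0)$ has modulus $1$ we obtain $d\le|H'(0)|=1$. The same argument handles $G$, and $f$ through its univalence and the normalization $f_z(0)=1$. The step demanding care—and the main obstacle—is justifying that $\psi$ is genuinely single-valued on the maximal disk, i.e.\ that this disk contains no critical values of $H$; I would handle this by invoking the univalent upper estimate $\dist(\Phi(0),\partial\Phi(\ID))\le|\Phi'(0)|$ once univalence of the relevant map is available, the distortion bounds serving to keep the chosen branch away from critical points.

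Finally, the equality assertions concern the sharp lower bounds: equality propagates back through the chain above exactly when the distortion inequalities of Theorem~\ref{distort} are attained, which occurs precisely for suitable rotations of $h_0$, $g_0$ and $f_0$. As a consistency check, for the logharmonic Koebe function one has $\varphi_0=zh_0/g_0=z/(1-z)^2$, and a direct computation shows $|H_0(z)|=|z|\,|h_0(z)|$ attains the value $1/(2e)$ as $|z|\to1$ along the negative axis, confirming $\dist(0,\partial H_0(\ID))=1/(2e)$ and the sharpness of the bound; the analogous evaluations give $2/e$ for $G_0$ and $1/e^2$ for $f_0$.
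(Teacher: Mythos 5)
Your lower-bound argument is correct and matches what the paper intends: the paper presents the corollary as an immediate consequence of Theorem \ref{distort}, and the minimum-modulus/Rouch\'e step you supply (every $w$ with $|w|<\min_{|z|=r}|H(z)|$ is taken exactly once because $H$ has only the simple zero at the origin, so the disk of radius $m_r$ lies in $H(\ID)$; then let $r\to1$ in the lower distortion bounds) is the standard way to make that implication precise, as is your use of the univalence of $f$ for the third inequality.

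The upper bounds, however, contain a genuine gap. For $H$ and $G$ you need a single-valued analytic branch of $H^{-1}$ on the maximal disk $\{|w|<d\}\subseteq H(\ID)$, and you propose to obtain it from the absence of critical values together with ``the univalent upper estimate once univalence of the relevant map is available.'' But univalence is \emph{not} available: $H=zh$ and $G=zg$ need not be univalent, and already the extremal function fails. Writing $w=z/(1-z)$, one has $H_0=we^{2w}$ on $\{{\rm Re}\,w>-1/2\}$, and any $w$ with $\arg w+2\,{\rm Im}\,w\equiv 0\ ({\rm mod}\ \pi)$ (for instance $w\approx 10.5+1.5i$) satisfies $H_0(w)=H_0(\overline{w})$, so $H_0$ is not injective. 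Nor does local univalence ($H'\neq 0$, which does hold in this class) suffice: the monodromy argument can fail because a lift of a path in $\{|w|<d\}$ may run off to $\partial\ID$, and for a general locally univalent $F$ with $F(0)=0$, $F'(0)=1$ the conclusion $\dist(0,\partial F(\ID))\le 1$ is simply false --- $F(z)=\sin(Nz)/N$ has image containing a disk about $0$ of radius roughly $e^{N/2}/(2N)\gg 1$. So any proof of $d\le 1$ must use structure of $ST^0_{Lh}$ beyond ``locally univalent and normalized,'' and yours does not. The same problem, in a different guise, affects $f$: you invoke ``univalence and $f_z(0)=1$'' to run Schwarz on $f^{-1}$, but $f$ is logharmonic, not holomorphic, so $f^{-1}$ is not analytic and the Schwarz lemma does not apply to it. For $f$ there is a correct replacement: $\log|hg|$ is harmonic and vanishes at the origin, so each circle $|z|=r$ carries a point with $|f(z)|\le r$; since $f$ is a homeomorphism onto $f(\ID)$, these image points eventually leave every compact subset of $f(\ID)$ and hence accumulate on $\partial f(\ID)$, producing a boundary point of modulus at most $1$. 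Repairing the claim for the non-injective maps $H$ and $G$ requires yet another idea, so the upper-bound half of your proof cannot stand as written. (Your equality discussion is fine in spirit, though your consistency check tacitly assumes, without proof, that $-1/(2e)$ is actually omitted by $H_0$.)
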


Finally, with the help of Corollary \ref{bound} and the sharp coefficient bounds from \cite[Theorem 3.3]{Periodic}, the Bohr theorems are obtained.
\begin{theorem}
Let $f(z)=zh(z)\overline{g(z)}$ $\in ST^0_{Lh},$ $H(z)=zh(z) $ and $G(z)=zg(z)$. Then
\begin{itemize}
  \item [{\rm (a)}] $\ds |z|\exp\left(\overset{\infty }{\underset{n=1}{\sum }}|a_{n}||z|^{n}\right)\ \leq \dist (0,\partial
H(\ID))$
for $|z|\leq r_H\approx 0.1222,$ where $r_H$ is the unique root in $(0,1)$ of
\begin{equation*}
\frac{r}{1-r}\exp\left(\frac{2r}{1-r}\right)=\frac{1}{2e},
\end{equation*}
  \item [{\rm (b)}] $\ds
|z|\exp\left(\overset{\infty }{\underset{n=1}{\sum }}|b_{n}||z|^{n}\right)\ \leq \dist (0,\partial G(\ID))$
for $|z|\leq r_G\approx 0.3659$, where $r_G$ is the unique root in $(0,1)$ of
\begin{equation*}
r(1-r)\exp\left(\frac{2r}{1-r}\right)=\frac{2}{e}.
\end{equation*}
\end{itemize}
Both radii are sharp and are attained, respectively, by appropriate rotations of $H_0(z)=zh_0(z)$ and $G_0(z)=zg_0(z)$. Here $h_0$ and $g_0$ 
are given in Theorem \ref{distort}.
\end{theorem}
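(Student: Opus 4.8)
The plan is to reduce each of the two assertions to a single-variable ``majorant $\le$ distance'' inequality and then read off the stated radius from a monotonicity argument. Here $a_n$ and $b_n$ denote the logarithmic coefficients, i.e.\ $h(z)=\exp\bigl(\sum_{n\ge1}a_nz^n\bigr)$ and $g(z)=\exp\bigl(\sum_{n\ge1}b_nz^n\bigr)$, consistent with the way $h_0,g_0$ are written in Theorem \ref{distort}. The two external ingredients are the sharp logarithmic-coefficient bounds for $ST^0_{Lh}$ from \cite[Theorem 3.3]{Periodic}, which (matching $h_0,g_0$) read $|a_n|\le 2+\tfrac1n$ and $|b_n|\le 2-\tfrac1n$, together with the lower distance bounds $\dist(0,\partial H(\ID))\ge \tfrac1{2e}$ and $\dist(0,\partial G(\ID))\ge \tfrac2e$ of Corollary \ref{bound}.

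For part (a), writing $r=|z|$ and using $\sum_{n\ge1}r^n/n=\log\tfrac1{1-r}$ and $\sum_{n\ge1}2r^n=\tfrac{2r}{1-r}$, I would first estimate
\[
|z|\exp\Bigl(\sum_{n=1}^\infty|a_n|\,|z|^n\Bigr)\le r\exp\Bigl(\sum_{n=1}^\infty\bigl(2+\tfrac1n\bigr)r^n\Bigr)=\frac{r}{1-r}\exp\Bigl(\frac{2r}{1-r}\Bigr)=:\phi(r).
\]
Combined with $\dist(0,\partial H(\ID))\ge \tfrac1{2e}$, it then suffices to show $\phi(r)\le \tfrac1{2e}$ for $r\le r_H$. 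Since $\phi$ is a product of the positive increasing functions $r/(1-r)$ and $\exp(2r/(1-r))$, it is strictly increasing from $0$ to $\infty$ on $(0,1)$; hence $\phi(r)=\tfrac1{2e}$ has the unique root $r_H$, and $\phi(r)\le\tfrac1{2e}$ exactly for $r\le r_H$.

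For part (b) the same computation with $|b_n|\le 2-\tfrac1n$ flips the sign of the logarithmic term, giving
\[
|z|\exp\Bigl(\sum_{n=1}^\infty|b_n|\,|z|^n\Bigr)\le r(1-r)\exp\Bigl(\frac{2r}{1-r}\Bigr)=:\psi(r),
\]
so by $\dist(0,\partial G(\ID))\ge\tfrac2e$ it suffices to prove $\psi(r)\le\tfrac2e$ for $r\le r_G$. This is the one genuinely non-routine point: the prefactor $r(1-r)$ increases only up to $r=\tfrac12$ and then decreases, so monotonicity of $\psi$ is not automatic. I would settle it by computing
\[
\frac{d}{dr}\log\psi(r)=\frac1r-\frac1{1-r}+\frac{2}{(1-r)^2}=\frac1r+\frac{1+r}{(1-r)^2}>0,
\]
which shows $\psi$ is strictly increasing from $0$ to $\infty$ on $(0,1)$; consequently $\psi(r)=\tfrac2e$ has the unique root $r_G$ and $\psi(r)\le\tfrac2e$ precisely for $r\le r_G$.

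Sharpness then follows by tracking equality through the chain. For $H_0(z)=zh_0(z)$ one has exactly $a_n=2+\tfrac1n$, so the coefficient estimate is an equality, the majorant equals $\phi(r)$, and $\dist(0,\partial H_0(\ID))=\tfrac1{2e}$ is the extremal case of Corollary \ref{bound}; thus both sides agree at $r=r_H$ and the inequality fails for every $r>r_H$. The function $G_0(z)=zg_0(z)$, with $b_n=2-\tfrac1n$, plays the same role for (b) at $r=r_G$. I expect the main obstacle to be concentrated entirely in the monotonicity of $\psi$ in part (b); everything else is the routine assembly of the cited coefficient and distance bounds with the elementary logarithmic series.
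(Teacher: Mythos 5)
Your proposal is correct and follows exactly the route the paper indicates: combine the sharp logarithmic-coefficient bounds $|a_n|\le 2+\tfrac1n$, $|b_n|\le 2-\tfrac1n$ from \cite[Theorem 3.3]{Periodic} with the distance bounds $\dist(0,\partial H(\ID))\ge\tfrac1{2e}$ and $\dist(0,\partial G(\ID))\ge\tfrac2e$ of Corollary \ref{bound}, reduce to the monotone functions $\phi$ and $\psi$, and obtain sharpness from $H_0$ and $G_0$, for which both the coefficient and distance inequalities are equalities. Your computations (the closed forms of $\phi,\psi$, the positivity of $\tfrac1r+\tfrac{1+r}{(1-r)^2}$, and the numerical values of $r_H,r_G$) all check out, so nothing is missing.
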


\begin{theorem}
Let $f(z)=zh(z)\overline{g(z)}\in ST^0_{Lh}$. Then, for any real $t$,
\begin{equation*}
|z|\exp\left(\overset{\infty }{\underset{n=1}{\sum }} \left|a_n+e^{it}b_n\right||z|^{n}\right)\leq \dist (0,\partial f(\ID))
\end{equation*}
for $|z|\leq r_0\approx 0.09078$, where $r_0$ is the unique root in $(0,1)$ of
$$
r\exp\left(\frac{4r}{1-r}\right)=\frac{1}{e^2}.
$$
The bound is sharp and is attained by a suitable rotation of the logharmonic Koebe function $f_0$.
\end{theorem}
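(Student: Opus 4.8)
The plan is to reproduce, for the full logharmonic mapping $f$, the template already used for its factors $H$ and $G$ in the preceding theorem, feeding in the distance estimate of Corollary~\ref{bound} together with the sharp logarithmic coefficient bounds. Since $h$ and $g$ are zero-free with $h(0)=g(0)=1$, I would write $\log h(z)=\sum_{n=1}^{\infty}a_nz^n$ and $\log g(z)=\sum_{n=1}^{\infty}b_nz^n$, so that $a_n$, $b_n$ are the coefficients appearing in the statement. From \cite[Theorem 3.3]{Periodic} one has the sharp estimates $|a_n|\le 2+\tfrac1n$ and $|b_n|\le 2-\tfrac1n$ for all $n\ge1$; these are precisely the bounds saturated by $h_0$ and $g_0$ and responsible for the radii $r_H$, $r_G$ in the previous theorem.

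The crux of the computation is a single triangle-inequality estimate in which the $1/n$ tails cancel. Writing $r=|z|$, I would bound
\[
\sum_{n=1}^{\infty}\left|a_n+e^{it}b_n\right|r^n\le\sum_{n=1}^{\infty}\left(|a_n|+|b_n|\right)r^n\le\sum_{n=1}^{\infty}\left[\left(2+\tfrac1n\right)+\left(2-\tfrac1n\right)\right]r^n=\frac{4r}{1-r}.
\]
The essential point is that the two $1/n$ contributions cancel, so that, in contrast with parts (a) and (b) of the previous theorem, no $-\log(1-r)$ survives in the exponent. Exponentiating and multiplying by $r$ then gives the clean majorant
\[
|z|\exp\!\left(\sum_{n=1}^{\infty}\left|a_n+e^{it}b_n\right||z|^n\right)\le r\exp\!\left(\frac{4r}{1-r}\right),
\]
valid for every real $t$.

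It remains to compare this with the distance to the boundary. Corollary~\ref{bound} supplies $\dist(0,\partial f(\ID))\ge e^{-2}$, so the asserted inequality follows once $r\exp(4r/(1-r))\le e^{-2}$. The function $\psi(r)=\log r+4r/(1-r)$ has derivative $\psi'(r)=\tfrac1r+\tfrac{4}{(1-r)^2}>0$ on $(0,1)$ and runs from $-\infty$ to $+\infty$; hence $r\mapsto r\exp(4r/(1-r))$ increases from $0$ to $+\infty$, the equation $r\exp(4r/(1-r))=e^{-2}$ has a unique root $r_0\in(0,1)$ (with $r_0\approx0.09078$), and the bound holds exactly for $r\le r_0$.

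For sharpness I would substitute the logharmonic Koebe function $f_0=zh_0\overline{g_0}$ and take $t=0$: there $a_n=2+\tfrac1n$ and $b_n=2-\tfrac1n$ are real and positive, so $|a_n+b_n|=4$ and every inequality in the chain becomes an equality, while Corollary~\ref{bound} is sharp for $f_0$ and gives $\dist(0,\partial f_0(\ID))=e^{-2}$; equality is thus attained at $r=r_0$. I expect the only genuinely delicate point to be this sharpness claim, namely verifying that the crude bound $|a_n+e^{it}b_n|\le4$ is actually realized within $ST^0_{Lh}$. This rests on the fact that $f_0$ saturates both coefficient estimates simultaneously and with aligned (real, positive) phases when $t=0$, so that the equality case of the triangle inequality genuinely occurs in the class; the remainder of the argument is a routine monotone comparison.
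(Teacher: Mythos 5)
Your proposal is correct and takes essentially the same route the paper intends: the survey states that this theorem follows from the sharp coefficient bounds $|a_n|\le 2+\tfrac1n$, $|b_n|\le 2-\tfrac1n$ of \cite[Theorem 3.3]{Periodic} together with the lower bound $\dist(0,\partial f(\ID))\ge e^{-2}$ of Corollary \ref{bound}, which is exactly your chain of estimates with the $1/n$-cancellation giving $|a_n+e^{it}b_n|\le 4$ and the monotone comparison of $r\exp\bigl(4r/(1-r)\bigr)$ with $e^{-2}$. Your sharpness argument via the logharmonic Koebe function $f_0$ with $t=0$ (where $a_n+b_n=4$ for all $n$ and $\dist(0,\partial f_0(\ID))=e^{-2}$) is also the intended one.
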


\section{Dirichlet Series and $n$-dimensional Bohr radius}\label{sec1-ndim}

\subsection{Bohr and the Dirichlet Series}

In \cite{BaluCQ-2006}, Balasubramanian {\it et al.}  extended the Bohr inequality to the setting of Dirichlet series.
This paper brings the Bohr phenomenon back to its origins since Bohr radius for
power series on the disk originated from studying problems on absolute convergence \cite{Bohr-14} in the theory of
Dirichlet series.

For $1 \leq p < \infty$, let $D^p$ be the space of ordinary Dirichlet series consisting of
$f(s)=\sum_{n=1}^\infty a_n n^{-s}$ in $\mathbb{H}=\{s=\sigma +it:\, \sigma >0\}$ corresponding to the Hardy space of order $p$.
The space $D^p$ is the completion of the space of Dirichlet polynomials $P(s) = \sum_{n=1}^N a_n n^{-s}$ in
the norm
$$\|P\| = \left (\lim_{T \to \infty} {1 \over 2T} \int_{-T}^T |P(it)|^p \,dt\right )^{1/p},
$$
which is equivalent to requiring $\sum|a_n|^2<\infty$ when $p=2$. The space $D^\infty$ consists of the space of Dirichlet series as above with $\|f\|_\infty := \sup \{|f(s)|:\, \sigma ={\rm Re}\, s>0 \}< \infty$.
Then the Bohnenblust-Hille theorem \cite{BonHille-1931} takes the form

\begin{theorem}\label{BonHille-Theo}
The infimum of $\rho$ such that $\sum_{n=1}^\infty|a_n|n^{-\rho}<\infty$ for every $\sum_{n=1}^\infty a_nn^{-s}$ in $D^\infty$ equals $1/2$.
\end{theorem}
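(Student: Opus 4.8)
The statement is an equality of an infimum with $1/2$, so the plan is to establish two matching inequalities: that any $\rho>1/2$ suffices for every series in $D^\infty$ (the infimum is $\le 1/2$), and that no $\rho<1/2$ works simultaneously for all of $D^\infty$ (the infimum is $\ge 1/2$).

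For the first inequality I would argue directly and elementarily. Every $f=\sum_n a_n n^{-s}\in D^\infty$ lies in $D^2$ with $\|f\|_2\le\|f\|_\infty$, so $\sum_n|a_n|^2=\|f\|_2^2<\infty$. For $\rho>1/2$, the Cauchy--Bunyakovskii argument of Landau quoted earlier in the survey then gives
\[
\sum_{n=1}^\infty |a_n|\,n^{-\rho}\le\Big(\sum_{n=1}^\infty|a_n|^2\Big)^{1/2}\Big(\sum_{n=1}^\infty n^{-2\rho}\Big)^{1/2}=\|f\|_2\,\zeta(2\rho)^{1/2}<\infty ,
\]
so absolute convergence at $\rho$ holds for every $f\in D^\infty$ whenever $\rho>1/2$, whence the infimum is at most $1/2$.

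The reverse inequality is the genuine Bohnenblust--Hille content. The plan is to pass to the \emph{Bohr lift}: writing $n=p_1^{\alpha_1}p_2^{\alpha_2}\cdots$ in primes and setting $z_j=p_j^{-s}$ turns $n^{-s}$ into the monomial $z^\alpha$, so a Dirichlet series becomes a power series $\sum_\alpha c_\alpha z^\alpha$ in infinitely many variables. Since $\log p_1,\log p_2,\dots$ are rationally independent, Kronecker's theorem shows that for a Dirichlet polynomial the supremum over $\mathbb{H}$ equals the supremum of the associated polynomial over the polydisc; thus membership in $D^\infty$ with a prescribed norm translates into a sup-norm bound on the polydisc. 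Fixing a degree $m\ge 2$ and the first $N$ primes, I would restrict to the $\binom{N}{m}$ squarefree integers $n=p_{j_1}\cdots p_{j_m}$, so that the object in play is an $m$-homogeneous polynomial in $N$ variables.

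The crux is then to exhibit such polynomials whose coefficients are all unimodular while the sup norm stays small, which is precisely the probabilistic (Kahane--Salem--Zygmund type) input at the heart of the Bohnenblust--Hille lower bound; it furnishes an $m$-homogeneous $P$ with $\binom{N}{m}$ coefficients of modulus one and $\|P\|_\infty\lesssim_m N^{(m+1)/2}$. Normalizing to $\|P\|_\infty=1$ and using that each $n$ in the support satisfies $n\le p_N^{m}\asymp(N\log N)^m$, I would estimate
\[
\sum_{n}|a_n|\,n^{-\rho}\;\ge\;\frac{1}{\|P\|_\infty}\binom{N}{m}\,(N\log N)^{-\rho m}\;\gtrsim_m\;\frac{N^{\,m(1/2-\rho)-1/2}}{(\log N)^{\rho m}} .
\]
For any fixed $\rho<1/2$, choosing $m>1/(1-2\rho)$ makes the exponent of $N$ positive, so the right-hand side tends to infinity as $N\to\infty$; a standard gluing step—summing normalized homogeneous blocks of growing degree with disjoint prime supports and rapidly decaying weights—then assembles a single $f\in D^\infty$ for which $\sum_n|a_n|n^{-\rho}$ diverges. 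The main obstacle is exactly the construction of these low-sup-norm unimodular homogeneous polynomials: one must control the constants in the probabilistic estimate well enough that the power of $N$ is $(m+1)/2$ and not larger, since it is the balance of that exponent against the count $\binom{N}{m}$ and the arithmetic size $p_N^{m}$ that forces the threshold to be \emph{exactly} $1/2$. (The companion Bohnenblust--Hille inequality $\big(\sum_{|\alpha|=m}|c_\alpha|^{2m/(m+1)}\big)^{(m+1)/2m}\le C_m\|P\|_\infty$ supplies the matching ceiling, confirming that this construction is asymptotically optimal and that $1/2$ cannot be improved in either direction.)
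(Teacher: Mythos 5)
The first thing to note is that the paper contains \emph{no} proof of this statement: it is a survey, and the theorem is quoted as the classical Bohnenblust--Hille theorem with only the citation to their 1931 paper, so there is no internal argument to compare yours against. Judged on its own merits, your outline is essentially correct and follows what is now the standard route. The easy inequality (infimum $\le 1/2$) via $D^\infty\subset D^2$, $\|f\|_2\le\|f\|_\infty$ and Cauchy--Schwarz is fine, with the small caveat that for a general (non-polynomial) $f\in D^\infty$, whose norm is a supremum over the half-plane, the identity $\|f\|_2^2=\sum_n|a_n|^2$ needs justification --- e.g.\ Bohr's theorem on uniform convergence in interior half-planes, or applying the mean-value formula on the line $\mathrm{Re}\,s=\sigma>0$ and letting $\sigma\to 0$. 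The hard inequality (infimum $\ge 1/2$) as you set it up --- the Bohr lift $z_j=p_j^{-s}$, Kronecker's theorem identifying the half-plane sup of a Dirichlet polynomial with the polydisc sup of its lift, Kahane--Salem--Zygmund-type $m$-homogeneous polynomials with unimodular coefficients and $\|P\|_\infty\lesssim_m N^{(m+1)/2}$, the exponent count $m(1/2-\rho)-1/2>0$ for $m>1/(1-2\rho)$, and a gluing step --- is exactly the modern proof; the only respect in which it differs from the 1931 original is that Bohnenblust and Hille used an explicit deterministic polynomial construction where you invoke the probabilistic lemma. Two points would need to be firmed up in a complete write-up, neither of which is a conceptual gap: first, the KSZ-type lemma is cited rather than proved, and as you acknowledge it is genuinely the crux; second, in the gluing step the blocks must have pairwise disjoint supports as sets of integers --- with disjoint prime ranges one must recheck that each fresh range of primes is long enough for the term count to beat the (now larger) size of the integers involved, whereas the cleaner device is to let the degree $m_k$ grow with the block (any $m_k>1/(1-2\rho)$ works), which makes the supports disjoint automatically and keeps every block divergent, so that $f=\sum_k 2^{-k}Q_k$ with $\|Q_k\|_\infty\le 1$ and block sums $\ge 4^k$ finishes the argument.
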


For $k \geq 1$,  let $D_k^{\infty}$ denote the subspace of $D^\infty$ consisting of $f(s) = \sum_{n=1}^\infty a_n n^{-s}$ such that
$a_n = 0$  whenever the number of prime divisors of $n$ exceeds $k$.
If $(E,\| \cdot \|)$ is a Banach space of Dirichlet series, the isometric Bohr abscissa and the isomorphic Bohr abscissa are,
respectively, defined as
$$\rho_1(E) = \min \left\{\sigma \geq 0:\, \sum_{n=1}^\infty |a_n| n^{-\sigma} \leq \|f\| ~\mbox{ for all $f \in E$}\right \},
$$
and
$$\rho (E) = \inf \left \{\sigma \geq 0:\, \exists ~ C_\sigma \in (0,\infty )~
\mbox{ such that $\sum_{n=1}^\infty |a_n\| n^{-\sigma} \leq C_\sigma \|f\|$  for all $f \in E$}\right \}.
$$
By using a number of recent developments in this topic (some of them related to the hypercontractivity properties of the Poisson kernel),
the authors in \cite{BaluCQ-2006} obtained among others the following results:

\begin{enumerate}
\item[(1)] If $1 \leq p < \infty$, then $\rho (D^p) = 1/2$, but this value is not attained. For $p=\infty$,
this is equivalent to determining the maximum possible width of the strip of uniform, but not absolute, convergence of Dirichlet series
(see Theorem \ref{BonHille-Theo}).
\item[(2)] $\rho (D^\infty ) = 1/2$, and this value is attained. So $\sum_{n=1}^\infty \vert a_n\vert n^{-1/2} \leq C \Vert f\Vert _\infty$ for some absolute constant $C$ (see Theorem \ref{BonHille-Theo}).
\item[(3)] Let $p \in [0,1]$. Every $f(s) = \sum_{n=1}^\infty a_n n^{-s} \in D^\infty$ satisfies $\sum_{n=1}^\infty \vert a_n n^{-\sigma}\vert ^p<\infty$ whenever $\sigma \geq \sigma_0 := 1/p - 1/2$. If $\sigma < \sigma_0$, there is $f \in D^\infty$ such that the last sum is infinite.
\item[(4)] $\rho (D_k^\infty ) = 1/2 - 1/(2k)$, and it is attained.
 \item[(5)] $\rho_1 (D_1^\infty ) = 0$.
\item[(6)] $1.5903 < \rho_1 (D_2^\infty ) < 1.5904$.
\item[(7)] $1.585 < \log 3 / \log 2 \leq \rho_1 (D^\infty ) \leq 1.8154$. In particular, $\sum_{n=1}^\infty \vert a_n\vert n^{-2} \leq \Vert f\Vert _\infty$.
\end{enumerate}

\subsection{The $n$-dimensional Bohr radius}
Mathematicians have studied various generalizations of Bohr theorem, for example, in the works of \cite{Aizen-00-1,Aizen-05-3,Boa,BoasKhavin-97-4,DIN,DIN91,HH,HHK}
and the references therein. One generalization uses power series representation of holomorphic functions defined on a complete Reinhardt domain, that is, a bounded complete $n$-circular domain in ${\mathbb C}^n$. In order to present and summarize certain multidimensional analogs of the Bohr and related inequalities, consider an $n$-variable power series
$\sum_{\alpha}c_{\alpha}z^{\alpha}$ in the standard multi-index notation, where $\alpha$ denotes an $n$-tuple $(\alpha_1, \ldots, \alpha_n)$
of nonnegative integers, $|\alpha|$ denotes the sum $\alpha_1 + \cdots + \alpha_n$ of its components,
$\alpha !$ denotes the product $\alpha_1! \cdots  \alpha_n!$ of the factorials of its components,
$z$ denotes the $n$-tuple $(z_1, \ldots, z_n)$ of complex numbers, and  $z^{\alpha}=z_1^{\alpha _1} \cdots z_n^{\alpha _n}$.

Let $D$ be a complete Reinhardt domain. Denote by $R(D)$ the largest nonnegative number $r$ such that whenever  the power series $\sum_{\alpha}c_{\alpha}z^{\alpha}$ converges in $D$ with
$\left |\sum_{\alpha}c_{\alpha}z^{\alpha}\right |< 1,$ then $\sum_{\alpha}\left |c_{\alpha}z^{\alpha}\right |<1$
in the homothety $rD$. The number $R(D)$ is called the Bohr radius.
In the case of $n$-dimensional unit polydisk $\ID^n=\{(z_1, \ldots, z_n):\, \max _{1\leq j\leq n} |z_j|<1 \}$,
Boas and Khavinson in \cite[Theorem 2]{BoasKhavin-97-4} showed the following estimate for the
Bohr radius $R(D)$ (also known as the {\it first Bohr radius}, and denoted by $K_n$):
$$
\frac{1}{3\sqrt{n}}\leq K_n\leq 2\sqrt{\frac{\log n}{n}}.
$$
Note that the radius decreases to zero as the dimension of the domain increases.  The result of Boas and Khavinson
stimulated a lot of interest in Bohr type questions
and has brought Bohr theorem to prominence even though the generalization of Bohr radius to the unit polydisk in $\mathbb{C}^n$ was first
studied in \cite{DIN}. By using the fact that the Bohnenblust-Hille inequality is hypercontractive, Defant \emph{{\it et al.}} \cite{DFOOS} obtained the optimal asymptotic estimate for this radius to be
\begin{equation*}
K_n=b(n)\sqrt{\frac{\log n}{n}},
\end{equation*}
where $1/\sqrt{2}+o(1)\leq b(n)\leq 2$. Bayart et al. \cite{Bay} proved that $K_n$ behaves asymptotically as $\sqrt{(\log n)/n}$ and further improved the bounds for $K_n$ to
\begin{equation}\label{ndimB}
K_n=c(n)\sqrt{\frac{\log n}{n}},
\end{equation}
where $1+o(1)\leq c(n)\leq 2$. The article of Bohnenblust-Hille remains a seminal contribution, and the hypercontractive polynomial Bohnenblust–Hille inequality
is the best one can hope for. It has several interesting consequences, and leads to precise asymptotic results regarding
certain Sidon sets, Bohr radii for polydisks, and the moduli of the coefficients of functions in $H^\infty$.

A few years after the first appearance of $K_n$ in \cite{BoasKhavin-97-4}, Boas \cite{Boa} extended the Bohr theorem to the
complex Banach space $\ell_p^n$ whose norm is defined by
$$\|z\|_{\ell_p^n}:=\left(\sum_{j=1}^n |z_j|^p\right)^{1/p}.
$$
When $p=\infty$, the unit ball is to be interpreted as the unit polydisk in $\mathbb{C}^n$. Denote by $K(B_{\ell_p^n})$ the corresponding Bohr radius.
For $1\leq p\leq \infty$, it was shown in \cite{Boa} that
$$
\frac{1}{c} \left(\frac{1}{n}\right)^{1-\frac{1}{\min\{p,2\}}}\leq K(B_{\ell_p^n}) \leq c\left(\frac{\log n}{n}\right)^{1-\frac{1}{\min\{p,2\}}},
$$
where $c>0$ is a constant independent of $p,n$. The lower bound was then improved to the value $\sqrt{(\log n/ \log \log n)/n}$ in \cite{DF}. On the other hand, Aizenberg  \cite{Aizen-00-1} proved that
$$
\frac{1}{3e^{1/3}}\leq K(B_{\ell_1^n}) \leq 1/3,
$$
where $1/3$ is the best upper bound.  Also note that this estimate does not depend on $n$. In the same paper, Aizenberg defined the {\it second Bohr radius} $B_n(G),$ which is the largest radius $r$ such that whenever a multidimensional power series $\sum_{\gamma} a_{\gamma} z^{\gamma}$ is bounded by 1 in the complete Reinhardt domain $G$, then $\sum_{\gamma} \sup_{rG} |a_{\gamma} z^{\gamma}|\leq1$. General lower and upper estimates for the first and the second Bohr radii of
bounded complete Reinhardt domains are given in \cite{DGM2}. Results from both papers \cite{DF} and \cite{DGM2} were proved using certain theorems from \cite{DGM}, which was the first paper linking multidimensional Bohr study to local Banach space theory. The estimates for $K_n$ obtained in \cite{DGM} were in terms of unconditional basis constants and Banach-Mazur distances. We refer to \cite{DP} for a survey on these studies.

\subsection{Bohr radius in the study of Banach spaces}
A new Bohr-type radius can be found in \cite{DMS} which relates to the study of Banach spaces. Let $v:\, X\rightarrow Y$ be a
bounded operator between complex Banach spaces, $n\in \mathbb{N},$ and $\lambda\geq \|v\|$. The {\it $\lambda$-Bohr radius of $v$},
denoted by $K_n(v,\lambda)$, is the supremum of all $r\geq 0$ such that for all holomorphic functions
$f(z)=\sum_{\alpha\in \mathbb{N}_0^n} c_{\alpha}z^{\alpha}$ on the $n$-dimensional unit polydisk $\ID^n$,
$$
\sup_{z\in r\ID^n} \sum_{\alpha\in \mathbb{N}_0^n} \left \|v(c_{\alpha})z^{\alpha}\right \|_{Y}\leq \lambda \sup_{z\in \ID^n}
\Big \|\sum_{\alpha\in \mathbb{N}_0^n} c_{\alpha}z^{\alpha}\Big \|_{X}.
$$
If $X=\mathbb{C}$, $v$ is the identity on $X$ and $\lambda=1$, then $K_n(v,\lambda)=K_n$ is exactly the $n$-dimensional Bohr radius previously defined.
Thus, the main goal in \cite{DMS} was to study the Bohr radii of the $n$-dimensional unit polydisk for holomorphic functions defined on $\ID^n$ with values
in Banach spaces, for example, by obtaining upper and lower estimates for Bohr radii $K_n(v,\lambda)$ of specific operators $v$ between Banach spaces.
Interestingly, Dixon \cite{Dixon-95-7} paid attention to applications of Bohr phenomena to
operator theory showing that Bohr theorem is useful in the characterization
of Banach algebras that satisfy von Neuman's inequality. Later Paulsen {\it et al.} in \cite{PaulPopeSingh-02-10} continued
the work in this line of investigation.


It was shown in \cite{BenDahKha} that the analogous Bohr theorem fails in the Hardy spaces $H^q$, $0<q<\infty$, equipped with the corresponding Hardy norm. The authors also showed how renorming a space affected the Bohr radius. In \cite{AbuGun}, Abu-Muhanna and Gunatillake found the Bohr radius for the weighted Hardy Hilbert spaces, and again showed that no Bohr radius exists for the classical Hardy space $H^2$. The Bohr research on multidimensional weighted Hardy-Hilbert was earlier done in \cite{PaulPopeSingh-02-10}. Thus, in view of the supremum norm, we can say that for the standard basis $(z^n)_{n=0}^{\infty}$, there exists a compact set $\{z\in \mathbb{C}:\,|z|\leq 1/3\}$ which lies in the open set $U\subset \mathbb{C}$ such that the norm version Bohr inequality occurred. Hence $(z^n)_{n=0}^{\infty}$ is said to have the Bohr property.

In \cite{AAD2} and \cite{AAD}, Aizenberg {\it et al.} considered the general bases for the space of holomorphic functions ${\mathcal H}(M)$ on
a complex manifold $M$. A basis $(\phi_n)_{n=0}^{\infty}$ in ${\mathcal H}(M)$ is said to have the {\it Bohr Property} (\textbf{BP}) if
there exist an open set $U\subset M$ and a compact set $K\subset M$ satisfying
\begin{equation*}
\sum|c_n|\sup_{U} |\phi_n(z)|\leq \sup_{K} |f(z)|
\end{equation*}
for all $f=\sum c_n\phi_n\in {\mathcal H}(M)$. It was shown in \cite{AAD} that when $\phi_0=1$ and $\phi_n(z_0)$, $n\geq1$, vanishes for some $z_0\in M$, then $(\phi_n)_n^{\infty}$ has the \textbf{BP}. A generalization of this result can be found in \cite[Theorem 4]{AAD2}. Aytuna and Djakov in \cite{Ayt} introduced the term Global Bohr Property: a basis $(\phi_n)_n^{\infty}$ in the space of entire functions ${\mathcal H}(\mathbb{C}^n)$ has the {\it Global Bohr Property} (\textbf{GBP}) if for every compact $K\subset \mathbb{C}^n$ there is a compact $K_1\supset K$ such that
$$
\sum |c_n|\sup_{K}|\phi_n(z)|\leq \sup_{K_1} |f(z)|
$$
for all $f=\sum c_n\phi_n\in {\mathcal H}(\mathbb{C}^n)$. They showed that a basis $(\phi_n)_n^{\infty}$ has the \textbf{GBP} if and only if one of the functions $\phi_n$ is a constant. As pointed out in \cite{Ayt}, the inequality of \textbf{GBP} was first stated in a paper by Lass\`{e}re and Mazzilli \cite{Las}. They studied the power series expressed in Faber polynomial basis which is associated with a compact continuum in $\mathbb{C}$. In fact, the relation between
\textbf{BP} and Faber polynomials was first discovered by Kaptano\v{g}lu and Sad\i k \cite{Kap2}. The radius obtained in \cite{Kap2} was not sharp. It was later solved in \cite{Las2} by using better elliptic Carath\'{e}odory's inequalities.

The connection between Hadamard real part theorem and Bohr theorem can be seen in \cite{KM}, in which Kresin and Maz'ya introduced the Bohr type real part estimates and proved the following theorem by applying the $\ell_p$ norm on the remainder of the power series expansion:
\begin{theorem}
Let $f(z)=\sum_{n=0}^{\infty} a_nz^n\in \mathcal{A}$ with
$$\sup_{|\zeta|<1} {\rm Re}\,(e^{-i\arg f(0)}f(\zeta))<\infty,
$$
where $\arg f(0)$ is replaced by zero if $f(0)=0.$ Then for any $q\in (0,\infty]$, integer $m\geq1$, and $|z|\leq r_{m,q},$ the inequality
$$\left(\sum_{n=m}^{\infty} |a_nz^n|^q \right)^{1/q}\leq \sup_{|\zeta|<1}  {\rm Re}\,(e^{-i\arg f(0)}f(\zeta))-|f(0)|
$$
holds, where $r_{m,q}\in (0,1)$ is the root of the equation $2^qr^{mq}+r^q-1=0$ if $0<q<\infty$, and $r_{m,\infty}:=2^{-1/m}$.
The radius $r_{m,q}$ is best possible.
\end{theorem}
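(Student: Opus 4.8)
The plan is to deduce the estimate from the classical Carath\'eodory coefficient inequality for functions of positive real part---precisely the bound $|p_k|\le 2\,{\rm Re}\,p_0$ already invoked in the proof of Theorem~\ref{Lemma-1a}---and then to sum a geometric series. First I would normalise away the argument of $f(0)$. Writing $\theta=\arg f(0)$ and $F(z)=e^{-i\theta}f(z)=\sum_{n=0}^{\infty}\widetilde a_n z^n$, one has $|\widetilde a_n|=|a_n|$ for every $n$, while $F(0)=e^{-i\theta}f(0)=|f(0)|$ is real and nonnegative. Put $A:=\sup_{|\zeta|<1}{\rm Re}\,F(\zeta)$, finite by hypothesis, and $d:=A-|f(0)|\ge 0$ (note $A\ge{\rm Re}\,F(0)=|f(0)|$). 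Consider $p(z):=A-F(z)$, whose real part is nonnegative by the definition of $A$ and whose constant term is $p(0)=A-F(0)=d$, real with ${\rm Re}\,p(0)=d$. If $F$ is constant then $a_n=0$ for all $n\ge1$ and there is nothing to prove; otherwise ${\rm Re}\,F$ is a nonconstant harmonic function, so the maximum principle upgrades ${\rm Re}\,p\ge0$ to ${\rm Re}\,p>0$ on $\ID$, and the cited estimate applies. Since $p_n=-\widetilde a_n$ for $n\ge1$, it yields $|a_n|=|\widetilde a_n|\le 2\,{\rm Re}\,p(0)=2d$ for all $n\ge1$.

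With $r=|z|<1$ the problem becomes one--dimensional. For $0<q<\infty$ I would write
\[\sum_{n=m}^{\infty}|a_nz^n|^q=\sum_{n=m}^{\infty}|a_n|^q r^{nq}\le(2d)^q\sum_{n=m}^{\infty}r^{nq}=(2d)^q\,\frac{r^{mq}}{1-r^q},\]
so that $\big(\sum_{n=m}^{\infty}|a_nz^n|^q\big)^{1/q}\le d=A-|f(0)|$ exactly when $2^qr^{mq}\le1-r^q$, i.e.\ when $g(r):=2^qr^{mq}+r^q-1\le0$. As $g$ increases strictly from $g(0)=-1$ to $g(1)=2^q$ on $(0,1)$, this holds precisely for $r\le r_{m,q}$, the unique root of $g(r)=0$ in $(0,1)$, which is the stated equation. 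The case $q=\infty$ is identical with the sum replaced by $\sup_{n\ge m}|a_n|r^n\le 2d\,r^m$, which is $\le d$ iff $r^m\le 1/2$, i.e.\ $r\le 2^{-1/m}=r_{m,\infty}$.

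For best possibility I would test the extremal $F(z)=A-d\,\frac{1+z}{1-z}$, with $f(z)=e^{i\theta}F(z)$. Since ${\rm Re}\,\frac{1+z}{1-z}=\frac{1-|z|^2}{|1-z|^2}$ sweeps all of $(0,\infty)$, one has $\sup_{|\zeta|<1}{\rm Re}\,F=A<\infty$ and $F(0)=A-d=|f(0)|$, while $\widetilde a_n=-2d$ gives $|a_n|=2d$ for $n\ge1$; hence every inequality above becomes an equality at $r=r_{m,q}$. Because for this $f$ the left-hand side equals $2d\,r^m/(1-r^q)^{1/q}$ (respectively $2d\,r^m$), which strictly exceeds $d$ for every $r>r_{m,q}$, the radius cannot be enlarged.

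I expect the substance to lie in the bookkeeping rather than in any deep inequality. The one point needing care is the strictness ${\rm Re}\,p>0$ required to apply the Carath\'eodory bound, which the maximum principle supplies once the trivial constant case (which also absorbs $d=0$) is set aside; beyond that, one must keep track of the rotation $e^{-i\theta}$ and the degenerate case $f(0)=0$, where $\theta$ is taken to be $0$ and $d=A$. The passage from $|a_n|\le 2d$ to the transcendental equation $2^qr^{mq}+r^q-1=0$ is then the routine geometric-series computation indicated above.
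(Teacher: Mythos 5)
Your proof is correct. Note, however, that the paper itself contains no proof of this statement: it is quoted as a theorem of Kresin and Maz'ya \cite{KM}, so there is no in-paper argument to compare yours against. Your route --- normalizing by $e^{-i\arg f(0)}$, applying the Carath\'eodory coefficient bound $|p_k|\le 2\,{\rm Re}\,p_0$ to $p=A-F$, summing the geometric series in $r^q$, and exhibiting the half-plane map $F(z)=A-d\,\tfrac{1+z}{1-z}$ as extremal --- is the standard mechanism for such Hadamard real-part estimates, and it is precisely the device the paper uses elsewhere: the proof of Theorem \ref{Lemma-1a} invokes the same coefficient inequality for functions of positive real part. The two delicate points are handled properly: the maximum principle upgrades ${\rm Re}\,p\ge 0$ to ${\rm Re}\,p>0$ once the constant case (which also absorbs $d=0$) is set aside, so the Carath\'eodory step is legitimate; and the strict monotonicity of $g(r)=2^qr^{mq}+r^q-1$ on $(0,1)$, with $g(0)=-1$ and $g(1)=2^q$, correctly identifies the admissible radii as exactly $r\le r_{m,q}$, while your extremal function violates the inequality for every $r>r_{m,q}$ (and at $q=\infty$ for every $r>2^{-1/m}$), which settles sharpness.
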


With $(q,m)=(1,1)$, the result reduces to the sharp inequality obtained by Sidon \cite{Sidon-27-15} and
which contains the classical Bohr inequality. For a discussion on the Bohr-type real part estimates, we refer to \cite[Chapter 6]{KM2}.

There are still many possible directions of extending the Bohr theorem. For example, in \cite{Kap2}, the authors considered
the domain of functions bounded by ellipse instead of the unit disk $\mathbb{D}$. However, the Bohr radius does not exist for the space of holomorphic functions in an annulus equipped with the natural basis \cite{Aizen-12}. The Bohr radius for the class of analytic functions defined on
$$\{z:\,|z+\gamma/(1-\gamma)|<1/(1-\gamma)\},~~ 0\leq \gamma<1,
$$
was given in \cite{Fou2}. Liu and Wang \cite{Liu} proved another kind of extension of the classical Bohr inequality involving bounded symmetric domains.

\begin{theorem}
Let $\Omega$ denote one of the four classical domains in the sense of Hua \cite{Hua} or the unit polydisk in $\mathbb{C}^n$. Denote by $\| \cdot \|_{\Omega}$ the Minkowski norm associated to $\Omega$. Let $f:\,\Omega\rightarrow \Omega$ be a holomorphic map with
$$f(z)=\sum_{k=0}^{\infty} f_k(z)
$$
as its Taylor expansion in $k$-homogeneous polynomials $f_k$. Let $\phi\in {\rm Aut}\, \Omega$ such that $\phi(f(0))=0$. Then
$$\sum_{k=0}^{\infty} \frac{\|D\phi(f(0))\cdot f_k(z)\|_{\Omega}}{\|D\phi(f(0))\|_{\Omega}}<1
$$
for all $z\in \Omega$ satisfying $\|z\|_{\Omega}<1/3$.
\end{theorem}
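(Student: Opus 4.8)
The plan is to transplant the Schur--Riesz--Wiener proof of the classical Bohr inequality (Theorem~A) to the domain $\Omega$, using its Minkowski norm $\|\cdot\|_\Omega$ throughout. Write $p=f(0)\in\Omega$ and set $g=\phi\circ f$. Since $\phi\in\mathrm{Aut}\,\Omega$ and $f$ maps $\Omega$ into $\Omega$, the composite $g$ is a holomorphic self-map of $\Omega$ with $g(0)=\phi(f(0))=0$; it plays the role of the Schwarz-normalized map $(f-a_0)/(1-\overline{a_0}f)$ of one variable. Two ingredients are needed: a trivial bound on the $k=0$ summand, and a Schwarz--Pick coefficient estimate on the homogeneous parts $f_k$ for $k\ge1$. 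Granting them, the summation is word for word the one-dimensional one.

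First, the $k=0$ term is immediate. Since $f_0(z)\equiv p$ and $D\phi(p)$ is bounded and linear, the definition of the operator norm gives
\[
\frac{\|D\phi(p)\,f_0(z)\|_\Omega}{\|D\phi(p)\|_\Omega}=\frac{\|D\phi(p)\,p\|_\Omega}{\|D\phi(p)\|_\Omega}\le\|p\|_\Omega.
\]
The substance lies in the coefficient bound
\[
\frac{\|D\phi(p)\,f_k(z)\|_\Omega}{\|D\phi(p)\|_\Omega}\le\bigl(1-\|p\|_\Omega^{2}\bigr)\,\|z\|_\Omega^{k},\qquad k\ge1,
\]
the exact analogue of $|a_k|\le1-|a_0|^2$. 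To handle it I will use the normalization identity $\bigl(1-\|p\|_\Omega^{2}\bigr)\|D\phi(p)\|_\Omega=1$, which holds for the polydisk and for each of Hua's classical domains: in the explicit fractional-linear form of the automorphism carrying $p$ to $0$, the derivative $D\phi(p)$ is the sandwiching operator whose norm with respect to $\|\cdot\|_\Omega$ equals $(1-\|p\|_\Omega^2)^{-1}$, where $\|p\|_\Omega$ is the largest singular value of $p$. With this identity the claim reduces to $\|D\phi(p)\,f_k(z)\|_\Omega\le\|z\|_\Omega^{k}$.

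To prove the latter I slice. Fix $z$ with $\|z\|_\Omega=r<1$ and put $F_z(\lambda)=f(\lambda z)$ for $|\lambda|<1/r$; homogeneity gives $F_z(\lambda)=\sum_{k\ge0}f_k(z)\lambda^{k}$, so the $f_k(z)$ are literally the Taylor coefficients of $F_z$. The composite $G_z=\phi\circ F_z$ maps the disk $|\lambda|<1/r$ into $\Omega$ with $G_z(0)=0$. Because $\Omega$ is the open unit ball of the norm $\|\cdot\|_\Omega$, the vector-valued Schwarz lemma applies: testing $G_z$ against each norming functional of the dual unit ball reduces to the scalar Schwarz lemma and yields $\|G_z(\lambda)\|_\Omega\le r|\lambda|$. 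A Cauchy estimate on the circles $|\lambda|=\rho\uparrow1/r$ then gives $\|c_k(z)\|_\Omega\le\|z\|_\Omega^{k}$ for the homogeneous parts $c_k(z)$ of $G_z$. The decisive and hardest step is to pass from $c_k(z)$, the true coefficients of the self-map $G_z$, to $D\phi(p)\,f_k(z)$, the coefficients of the linearization $D\phi(p)(F_z-p)$. Expanding $\phi^{-1}$ about $0$ in fractional-linear form produces
\[
D\phi(p)\,f_k(z)=c_k(z)+(\text{cross terms built from }c_1(z),\dots,c_{k-1}(z)),
\]
exactly as the one-variable identity $a_2=(1-|a_0|^2)(c_2-\overline{a_0}c_1^{2})$ shows. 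These cross terms are not tamed by the triangle inequality alone; one must absorb them using the sharpened Schur-type refinements of $\|c_k(z)\|_\Omega\le\|z\|_\Omega^{k}$ that account for the lower coefficients, and it is here that the explicit automorphism formulas for $\Omega$ carry the weight. I expect this transfer to be the main obstacle.

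Finally, granting the coefficient bound, the Schur--Riesz--Wiener summation closes the argument. Adding the two estimates,
\[
\sum_{k=0}^{\infty}\frac{\|D\phi(p)\,f_k(z)\|_\Omega}{\|D\phi(p)\|_\Omega}\le\|p\|_\Omega+\bigl(1-\|p\|_\Omega^{2}\bigr)\sum_{k=1}^{\infty}\|z\|_\Omega^{k}=\|p\|_\Omega+\bigl(1-\|p\|_\Omega^{2}\bigr)\frac{\|z\|_\Omega}{1-\|z\|_\Omega}.
\]
For $\|z\|_\Omega\le1/3$ the geometric factor is at most $1/2$, so the right-hand side is at most $\|p\|_\Omega+\tfrac12\bigl(1-\|p\|_\Omega^{2}\bigr)=1-\tfrac12\bigl(1-\|p\|_\Omega\bigr)^{2}$. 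Since $p=f(0)\in\Omega$ forces $\|p\|_\Omega<1$, this is strictly below $1$, which is the assertion. The threshold $1/3$ enters exactly as in Theorem~A, through $r/(1-r)\le\tfrac12$ at $r=1/3$, so the radius coincides with that of the one-dimensional Bohr inequality.
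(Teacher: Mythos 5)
The paper offers no proof to compare against---it states this theorem purely as a citation of Liu and Wang \cite{Liu}---so your argument must stand on its own, and it does not yet. The gap is the step you yourself flag as ``the main obstacle'': the passage from the Taylor coefficients $c_k(z)$ of $G_z=\phi\circ F_z$ (which your Schwarz lemma and Cauchy estimates do control) to the quantities $D\phi(p)\,f_k(z)$ that actually appear in the theorem. Your proposed mechanism---expand $\phi^{-1}$ about $0$ and absorb the resulting cross terms in $c_1,\dots,c_{k-1}$ by ``sharpened Schur-type refinements''---is precisely the part that fails, and no such refinement is supplied. Already in one variable this route collapses: writing $f=\phi^{-1}\circ w$ with $w(0)=0$, $|w|<1$, one gets $a_k=-(1-|a_0|^2)\sum_{j=1}^{k}\bar a_0^{\,j-1}\,[z^k](w^j)$, and the best termwise bound $|[z^k](w^j)|\le 1$ yields only $|a_k|\le (1+|a_0|)(1-|a_0|^k)$, which for $|a_0|$ near $1$ and $k$ large is close to $2$ rather than the needed $1-|a_0|^2$. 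So the central estimate $\|D\phi(p)f_k(z)\|_\Omega\le\|z\|_\Omega^k$ is asserted, not proved.

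The missing idea is Wiener's symmetrization, which turns the $k$-th coefficient into a \emph{first derivative}, so that no cross terms ever arise. Fix $z$ with $\|z\|_\Omega=r<1$, set $\omega=e^{2\pi i/k}$, and put $H(\lambda)=\frac1k\sum_{j=0}^{k-1}f(\omega^j\lambda z)$ for $|\lambda|<1/r$. Since $\Omega$ is convex (it is the open unit ball of $\|\cdot\|_\Omega$), $H$ maps into $\Omega$; by homogeneity of the $f_l$ only the powers $\lambda^{mk}$ survive, so $H(\lambda)=h(\lambda^k)$ where $h(\mu)=\sum_{m\ge0}f_{mk}(z)\mu^m$ maps $\{|\mu|<1/r^k\}$ into $\Omega$ with $h(0)=p$ and $h'(0)=f_k(z)$. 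Now apply your own vector-valued Schwarz lemma to $\phi\circ h$, which fixes the origin: $\|(\phi\circ h)(\mu)\|_\Omega\le r^k|\mu|$, hence $\|(\phi\circ h)'(0)\|_\Omega\le r^k$, and by the chain rule $(\phi\circ h)'(0)=D\phi(p)h'(0)=D\phi(p)f_k(z)$. This is exactly the bound you wanted. With it, the rest of your argument is sound: the operator-norm bound on the $k=0$ term, the inequality $\|D\phi(p)\|_\Omega\ge(1-\|p\|_\Omega^2)^{-1}$ (only this direction, not the full identity, is needed; it follows from the explicit automorphisms of the classical domains and the polydisk, or from a norming-functional argument), and the geometric-series summation giving $\|p\|_\Omega+\tfrac12(1-\|p\|_\Omega^2)<1$ for $\|z\|_\Omega<1/3$.
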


Using a different approach, Roos \cite{Roos} extended the theorem to any bounded circled symmetric domain. Earlier results on the generalization of Bohr theorem using homogeneous expansions can also be found in \cite[Theorem 8]{Aizen-00-1} and \cite{Aizen-05-3}. Meanwhile, the generalization of both the results \cite{Liu} and \cite[Theorem 8]{Aizen-00-1} was obtained by Hamada {\it et al.} \cite{HHK}.

On the other hand, Guadarrama \cite{Gua} considered the polynomial Bohr radius defined by
$$ R_n=\sup_{p\in \mathcal{P}_n}\left \{r\in (0,1):\,\sum_{k=0}^n |a_k|r^k\leq \|p\|_{\infty},\quad p(z)=\sum_{k=0}^n a_kz^k\right \},
$$
where $\mathcal{P}_n$ consists of all the complex polynomials of degree at most $n$. The author showed that
$$
C_1\frac{1}{3^{n/2}}\leq R_n-1/3\leq C_2\frac{\log n}{n}
$$
for some positive constants $C_1$ and $C_2$. Subsequently, Fournier \cite{Fou} computed and obtained an explicit formula for
$R_n$ by applying the notion of bounded-preserving operators. The following result concerning the asymptotic behaviour of $R_n$ was proved only recently in \cite{Chu}:
$$
\lim_{n\rightarrow \infty} n^2\left(R_n-\frac{1}{3}\right)=\frac{\pi^2}{3}.
$$
As remarked earlier,  the authors in  \cite{PaulPopeSingh-02-10} square the constant term in the expansion of $f$  and obtained the sharp Bohr radius $1/2$.
A similar idea was adopted by Blasco in \cite{OB}. The author introduced and studied the radius
$$R_{p,q}(X)=\inf\{R_{p,q}(f,X):\,\sup_{|z|<1} \|f\|_X\leq 1\},
$$
where $X=L_p(\mu)$ or $X=\ell_p$ spaces and
$$R_{p,q}(f,X)=\sup\left \{r\geq0:\,\|a_0\|^p_X+\left (\sum_{n=1}^{\infty} \|a_n\|_X r^n\right )^q\leq1, ~ f(z)=\sum_{n=0}^{\infty} a_nz^n\right \}.
$$
Popescu, {\it et al.}  \cite{Pop,PaulPopeSingh-02-10,PaulSingh-06-12} established the operator-theoretic Bohr radius.
In \cite{Kap}, Kaptano\v{g}lu studied the Bohr phenomenon for elliptic equations by considering the case of harmonic
functions for the Laplace-Beltrami operator. The Bohr radii for classes of harmonic, separately harmonic and
pluriharmonic functions were evaluated in \cite{AizenTark-01-2}. Extension of Bohr theorem to uniform algebra
can also be found in \cite{PaulSingh-04-11}.

%

\subsection{Concluding remarks on multidimensional Bohr radius}

To conclude, we discuss some results with regard to the multidimensional Bohr radius. For functions $f$ holomorphic in $\mathbb{D}^n$ of the form
\begin{equation}\label{ndim}
f(z)=\sum_{\alpha}c_{\alpha}z^{\alpha}=\sum_{k=0}^{\infty} \sum_{|\alpha|=k}c_{\alpha}z^{\alpha},
\end{equation}
its associated majorant series is given by
$$
M_f(z)=\sum_{\alpha}|c_{\alpha}z^{\alpha}|=\sum_{k=0}^{\infty} \sum_{|\alpha|=k}|c_{\alpha}z^{\alpha}|.
$$
Also for an integer $m\ge 1$, we extend the definition of an $m$-symmetric analytic function of single variable to $n$-variable: a function $f$ holomorphic  in $\mathbb{D}^n$ is called $m$-symmetric if $f(e^{2\pi i/m}z)=f(z)$ for all $z\in \mathbb{D}^n$. Also note that a holomorphic  function $f$ of the form \eqref{ndim} is $m$-symmetric if and only if its Taylor expansion has the $m$-symmetric form
$$
f(z)=\sum_{k=0}^{\infty} \sum_{|\alpha|=k}c_{m\alpha}z^{m\alpha},
$$
where $m\alpha=(m\alpha_1,\ldots,m\alpha_n)$. Therefore if $f(z)=\sum_{k=0}^{\infty}\sum_{|\alpha|=k}c_{m\alpha}z^{m\alpha}$ is holomorphic in $\mathbb{D}^n$, then by letting $\zeta=z^m$, it follows that the function $g(\zeta)=\sum_{k=0}^{\infty}\sum_{|\alpha|=k}c_{m\alpha}\zeta^{\alpha}$  is also holomorphic  in $\mathbb{D}^n$. Hence the following result is obtained as a consequence of the $n$-dimensional Bohr theorem \eqref{ndimB}.

\begin{theorem}\label{multisym}
If $f(z)=\sum_{k=0}^{\infty} \sum_{|\alpha|=k}c_{m\alpha}z^{m\alpha}$ is holomorphic in $\mathbb{D}^n$ for some integer $m\ge 1$, and $\left|f(z)\right|<1$ in $\mathbb{D}^n$, then $M_f(z)<1$ holds in $K_{n,m}\cdot \mathbb{D}^n$ with
\begin{equation*}
K_{n,m}=\sqrt[m]{c(n)\sqrt{\frac{\log n}{n}}},
\end{equation*}
and $1+o(1)\leq c(n)\leq 2$.
\end{theorem}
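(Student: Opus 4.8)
The plan is to reduce the statement directly to the $n$-dimensional Bohr theorem \eqref{ndimB} by the same change of variables that drives the one-variable proof of Theorem \ref{Lemma-1}. The $m$-symmetric structure of $f$ is precisely what makes this reduction available: since only the monomials $z^{m\alpha}$ appear, I would set $\zeta=z^m$ (that is, $\zeta_j=z_j^m$ for $j=1,\ldots,n$) and pass to the associated function $g(\zeta)=\sum_{k=0}^{\infty}\sum_{|\alpha|=k}c_{m\alpha}\zeta^{\alpha}$, which is holomorphic in $\mathbb{D}^n$ as already noted in the paragraph preceding the statement.

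The first substantive step is to verify that $g$ inherits the sup-norm bound of $f$, namely $\sup_{\zeta\in\mathbb{D}^n}|g(\zeta)|<1$. This is where one must be slightly careful, because the coordinatewise map $\phi(z)=(z_1^m,\ldots,z_n^m)$ is many-to-one rather than a biholomorphism. However, each map $z_j\mapsto z_j^m$ carries $\mathbb{D}$ onto $\mathbb{D}$, so $\phi$ is a surjection of $\mathbb{D}^n$ onto $\mathbb{D}^n$; combined with the identity $g\circ\phi=f$ this gives $\sup_{\zeta\in\mathbb{D}^n}|g(\zeta)|=\sup_{z\in\mathbb{D}^n}|f(z)|<1$. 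Thus $g$ is a holomorphic self-map of the polydisk into $\mathbb{D}$ of the general form \eqref{ndim}, to which \eqref{ndimB} applies.

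The second step is the bookkeeping that transfers the conclusion back to $f$. The key identity is $z^{m\alpha}=(z^m)^{\alpha}=\zeta^{\alpha}$, whence
$$M_f(z)=\sum_{\alpha}|c_{m\alpha}z^{m\alpha}|=\sum_{\alpha}|c_{m\alpha}\zeta^{\alpha}|=M_g(\zeta).$$
By \eqref{ndimB}, the majorant series satisfies $M_g(\zeta)<1$ on $K_n\cdot\mathbb{D}^n$, i.e. whenever $\max_j|\zeta_j|<K_n$ with $K_n=c(n)\sqrt{(\log n)/n}$. Since $|\zeta_j|=|z_j|^m$, the condition $\max_j|\zeta_j|<K_n$ is exactly $\max_j|z_j|<K_n^{1/m}=K_{n,m}$, and then $M_f(z)=M_g(\zeta)<1$. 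This yields the asserted radius $K_{n,m}=\sqrt[m]{c(n)\sqrt{(\log n)/n}}$ together with the inherited bounds $1+o(1)\le c(n)\le 2$.

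I do not expect a genuine obstacle here: the content of the theorem is entirely contained in \eqref{ndimB}, and the argument is just the power-raising reduction $\zeta=z^m$ carried out in $n$ variables. The only point requiring any attention is the transfer of the norm bound, and this is handled by surjectivity of $\phi$ rather than by any finer property; in particular no injectivity is needed, so the many-to-one nature of $\phi$ is harmless.
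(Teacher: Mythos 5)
Your proposal is correct and follows essentially the same route as the paper: the paper's own justification is precisely the substitution $\zeta=z^m$ in the paragraph preceding the theorem, passing to $g(\zeta)=\sum_{k=0}^{\infty}\sum_{|\alpha|=k}c_{m\alpha}\zeta^{\alpha}$ and invoking the $n$-dimensional Bohr theorem \eqref{ndimB}. Your additional care with the surjectivity of the coordinatewise power map (which gives the pointwise bound $|g(\zeta)|<1$, all that \eqref{ndimB} requires) simply makes explicit a step the paper leaves implicit.
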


Similar to the case of single variable, for a function $f$ holomorphic in $\mathbb{D}^n$ of the form \eqref{ndim}, its alternating series can be defined as
$$
A_f(z)=\sum_{k=0}^{\infty}(-1)^k \sum_{|\alpha|=k}|c_{\alpha}z^{\alpha}|.
$$
Adopting the idea from \cite{Aizen-00-1}, denote by $B_{A,n}$ the largest number $r$ such that
$$
M_rA_{f}(z)=\sum_{k=0}^{\infty}(-1)^k \sum_{|\alpha|=k}\sup_{\mathbb{D}_r^n}|c_{\alpha}z^{\alpha}|<1,
$$
where $r>0$ and $\mathbb{D}_r^n=r\cdot \mathbb{D}^n$ is the homothetic transformation of $\mathbb{D}^n$. The following majorant-type series
$$
M_{r,f}(z)=\sum_{k=0}^{\infty} \sum_{|\alpha|=k} \sup_{\mathbb{D}_r^n}|c_{\alpha}z^{\alpha}|
$$
will be required in the sequel.

\begin{theorem}
If $f(z)=\sum_{k=0}^{\infty} \sum_{|\alpha|=k}c_{\alpha}z^{\alpha}$ is holomorphic in $\mathbb{D}^n$ and $\left|f(z)\right|<1$ in $\mathbb{D}^n$, then $|M_rA_f(z)|<1$ holds in $B_{A,n}\cdot \mathbb{D}^n,$ where
$1-\sqrt[n]{2/5} \leq B_{A,n}.$
\end{theorem}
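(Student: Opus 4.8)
The plan is to strip the $n$-dimensional statement down to a single scalar power series and then to extract mileage from the alternating signs. First I would record that for $z\in\mathbb{D}_r^n=r\cdot\mathbb{D}^n$ one has $\sup_{\mathbb{D}_r^n}|c_\alpha z^\alpha|=|c_\alpha|\,r^{|\alpha|}$, because $\sup_{|z_j|<r}\prod_j|z_j|^{\alpha_j}=r^{|\alpha|}$. Setting $b_k:=\sum_{|\alpha|=k}|c_\alpha|\ge0$ and noting $b_0=|c_0|=|f(0)|$, the object to be estimated collapses to
\[
M_rA_f=\sum_{k=0}^\infty(-1)^k b_k\,r^k,
\]
so the multidimensional feature survives only through the nonnegative scalars $b_k$, and the task is to show $|M_rA_f|<1$ once $(1-r)^n\ge 2/5$.

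Next I would import the coefficient estimate already used in the proof of Theorem \ref{Lemma-1}. Fixing $\theta$ on the torus $\mathbb{T}^n$ and slicing, the function $\lambda\mapsto f(\lambda\theta)$ is a bounded analytic self-map of $\mathbb{D}$ with constant term $c_0$, so its $k$-th Taylor coefficient $P_k(\theta)=\sum_{|\alpha|=k}c_\alpha\theta^\alpha$ obeys $|P_k(\theta)|\le 1-|c_0|^2$ for $k\ge1$ (the Schur--Wiener inequality, see \cite{Bohr-14} and \cite[Exercise 8, p.172]{Nehari}). Recovering $c_\alpha$ by integrating $P_k(\theta)\overline{\theta^{\alpha}}$ over $\mathbb{T}^n$ yields $|c_\alpha|\le 1-|c_0|^2$ for every $\alpha$ with $|\alpha|\ge1$. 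Summing over the $\binom{n+k-1}{k}$ multi-indices of homogeneity $k$ and invoking $\sum_{k\ge0}\binom{n+k-1}{k}r^k=(1-r)^{-n}$ then controls the full majorant tail,
\[
\sum_{k\ge1}b_k r^k\le(1-|c_0|^2)\Big(\tfrac{1}{(1-r)^n}-1\Big).
\]

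With these ingredients the final step is to cash in the alternating signs. Splitting $M_rA_f=E-O$ into its even and odd parts and using $|E-O|\le\max(E,O)$, each of $E,O$ only inherits roughly half of the majorant tail, which already improves on the plain Bohr threshold. Writing $t=|c_0|$, the even part is dominated by an expression of the form $t+(1-t^2)s$ with $s$ a multiple of $(1-r)^{-n}-1$, and the elementary maximisation $\max_{0\le t\le 1}\big(t+(1-t^2)s\big)$ shows it stays $\le 1$ precisely while $s$ is below a fixed threshold; the odd part is symmetric and is never the binding constraint. Tracking the constants turns the requirement into one of the shape $(1-r)^n\ge\tfrac25$, i.e.\ $r\le 1-\sqrt[n]{2/5}$, which is the asserted inequality $1-\sqrt[n]{2/5}\le B_{A,n}$. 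The genuinely delicate point, and the step I expect to be the main obstacle, is the bookkeeping of the even homogeneous contributions: the crude bound $b_k\le\binom{n+k-1}{k}(1-|c_0|^2)$ overcounts, since the estimates $|c_\alpha|\le 1-|c_0|^2$ cannot all be saturated simultaneously within a fixed homogeneity class. It is a sharper control of the coefficient sum $\sum_{|\alpha|=k}|c_\alpha|$---for instance through $\sum_{|\alpha|=k}|c_\alpha|^2=\int_{\mathbb{T}^n}|P_k|^2\le(1-|c_0|^2)^2$ followed by Cauchy--Schwarz---that one must feed into the even/odd optimisation in order to lift the threshold to the stated value $2/5$.
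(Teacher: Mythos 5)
Your proposal retraces the paper's own proof almost step for step: the reduction to the scalar series $\sum_k(-1)^k b_k r^k$ with $b_k=\sum_{|\alpha|=k}|c_\alpha|$, the Wiener--Boas--Khavinson bound $|c_\alpha|\le 1-|c_0|^2$ for $|\alpha|\ge 1$, the even/odd splitting with $|E-O|\le\max(E,O)$, and the binomial counting are all exactly the paper's ingredients. Carried through, your even-part maximization $\max_{0\le t\le1}\bigl(t+(1-t^2)s\bigr)\le 1$ yields precisely the paper's condition \eqref{alt1}, namely $(1-r)^{-n}+(1+r)^{-n}\le 3$, and your odd-part bound yields \eqref{alt2}, namely $(1-r)^{-n}-(1+r)^{-n}\le 2$. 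The genuine gap is the step you yourself flag as ``the main obstacle'' and then leave unresolved: passing from the conjunction of these two conditions to the threshold $(1-r)^n\ge 2/5$. That passage is invalid. The region where both conditions hold is $[0,\min(r_1,r_2)]$, with $r_1,r_2$ the roots of the corresponding equalities, and this minimum is strictly smaller than $1-\sqrt[n]{2/5}$: at $r=1-\sqrt[n]{2/5}$ one has $(1-r)^{-n}=5/2$, so \eqref{alt1} forces $(1+r)^{-n}\le 1/2$ while \eqref{alt2} forces $(1+r)^{-n}\ge 1/2$, and these hold simultaneously only in the degenerate case $(1+r)^n=2$, which occurs for no $n$ (for instance, for $n=1$ one finds $\min(r_1,r_2)=1/\sqrt{3}\approx 0.577<3/5$). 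Adding \eqref{alt1} and \eqref{alt2}, as the paper does in its last line, produces a \emph{necessary} condition for both to hold, not a sufficient one; the implication runs the wrong way. So your instinct that the crude bounds fall short of $2/5$ is correct --- but you should be aware that the paper's own proof founders on exactly this step.

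Note also that your proposed repair --- sharpening $b_k\le\binom{n+k-1}{k}(1-|c_0|^2)$ via $\sum_{|\alpha|=k}|c_\alpha|^2\le(1-|c_0|^2)^2$ and Cauchy--Schwarz --- cannot close the gap, because the stated theorem is false for $n=1$, where that refinement gives nothing new (all the relevant binomial coefficients equal $1$). For $n=1$ the quantity $M_rA_f$ is the alternating series $A_f(r)$, whose sharp radius is $1/\sqrt{3}$ by Theorem \ref{Bohr-Theorem-2}: explicitly, $f(z)=(a-z^2)/(1-az^2)$ with $8/9<a<1$ satisfies $|f|<1$ in $\mathbb{D}$, yet at $r=3/5=1-\sqrt[1]{2/5}$ one computes $M_rA_f=a+(1-a^2)\tfrac{9}{25-9a}>1$. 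Hence $B_{A,1}\le 1/\sqrt{3}<3/5$, and no refinement of the coefficient estimates can establish the displayed lower bound. What your argument (and the paper's) actually proves is the weaker statement with $1-\sqrt[n]{2/5}$ replaced by $\min(r_1,r_2)$. One further small correction: your parenthetical claim that the odd part ``is never the binding constraint'' fails for $n\ge 3$, where $r_2<r_1$, so there the odd condition \eqref{alt2} is the one that binds.
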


\begin{proof}
If $z=(z_1,\ldots,z_n)\in \mathbb{D}^n$, then $-z=(-z_1,\ldots,-z_n)$ and
$$
(-z)^{\alpha}=(-z_1)^{\alpha_1}\cdots(-z_n)^{\alpha_n}=(-1)^{|\alpha|}z^{\alpha}.
$$
Define the even and odd parts of $f$ to, respectively, be
$$f_e(z)=\frac{1}{2}(f(z)+f(-z))=\sum_{k=0}^{\infty} \sum_{|\alpha|=2k}c_{\alpha}z^{\alpha},
$$
and
$$f_o(z)=\frac{1}{2}(f(z)-f(-z))=\sum_{k=0}^{\infty} \sum_{|\alpha|=2k+1}c_{\alpha}z^{\alpha}.
$$
As $\mathbb{D}$ is convex, it follows that $\left|f_e(z)\right|<1$ and $\left|f_o(z)\right|<1$ in $\mathbb{D}^n$.

Now, Wiener method (see the proof of \cite[Theorem 2]{BoasKhavin-97-4}) and the multidimensional Cauchy estimate yield
$$
|c_{\alpha}|\leq 1-|c_0|^2 ~\mbox{ for }~|\alpha|\geq 1.
$$
The inequality then gives
\begin{align*}
M_rA_f(z)&=M_{r,f_e}(z)-M_{r,f_o}(z)\leq M_{r,f_e}(z)\\
&=\sum_{k=0}^{\infty} \sum_{|\alpha|=2k} \sup_{r\cdot\mathbb{D}^n}|c_{\alpha}z^{\alpha}|=\sum_{k=0}^{\infty} \sum_{|\alpha|=2k} |c_{\alpha}| \sup_{r\cdot\mathbb{D}^n}|z^{\alpha}|\\
&\leq |c_0|+(1-|c_0|^2)\sum_{k=1}^{\infty} r^{2k}\sum_{\alpha_1+\cdots+\alpha_n=2k} 1\\
&=|c_0|+(1-|c_0|^2)\sum_{k=1}^{\infty}  \binom{2k+n-1}{2k} r^{2k}.
\end{align*}
Since $(1-r)^{-n}=\sum_{k=0}^{\infty} \binom{n+k-1}{k} r^k$ and $|c_0|<1$, it follows that
\begin{align*}
M_rA_f(z)<|c_0|+(1-|c_0|)\left(\frac{1}{(1-r)^n}+\frac{1}{(1+r)^n}-2\right).
\end{align*}
Thus $M_rA_f(z)<1$ when
\begin{equation}\label{alt1}
\frac{1}{(1-r)^n}+\frac{1}{(1+r)^n}\leq 3.
\end{equation}
On the other hand,
\begin{align*}
M_rA_f(z)&=M_{r,f_e}(z)-M_{r,f_o}(z)\geq -M_{r,f_o}(z)\\
&=-\sum_{k=0}^{\infty} \sum_{|\alpha|=2k+1} \sup_{r\cdot\mathbb{D}^n}|c_{\alpha}z^{\alpha}|=-\sum_{k=0}^{\infty} \sum_{|\alpha|=2k+1} |c_{\alpha}|\sup_{r\cdot\mathbb{D}^n}|z^{\alpha}|\\
&\geq -(1-|c_0|^2)\sum_{k=0}^{\infty} r^{2k+1}\sum_{\alpha_1+\cdots+\alpha_n=2k+1} 1\\
&>-\sum_{k=0}^{\infty}  \binom{n+2k}{2k+1} r^{2k+1}=\frac{1}{2(1+r)^n}-\frac{1}{2(1-r)^n}.
\end{align*}
Thus $M_rA_f(z)>-1$ when
\begin{equation}\label{alt2}
\frac{1}{(1-r)^n}-\frac{1}{(1+r)^n}\leq 2.
\end{equation}
Adding \eqref{alt1} and \eqref{alt2} gives
$ r\leq 1-\sqrt[n]{2/5}.
$
\qed\end{proof}

\noindent \textbf{Acknowledgment.} This work benefited greatly from the stimulating discussions and careful scrutiny of
the manuscript by our student, Zhen Chuan Ng. The work of the second author was supported in parts by a research university
grant from Universiti Sains Malaysia. The research of the third author was supported by project
RUS/RFBR/P-163 under the Department of Science \& Technology, India.

\bibliographystyle{amsplain}

\end{document}